\begin{document}

\pagenumbering{arabic}

\title[Varieties with nef anticanonical divisors]{Varieties with nef anticanonical divisors and Albanese morphisms of relative dimension one in positive characteristic}

\subjclass[2020]{14E30}

\begin{abstract}
Let $X$ be a smooth projective variety with a nef anticanonical divisor over an algebraically closed field of characteristic $p>0$. In this paper, we establish a precise structure of $X$ under the condition that $a_X: X \to \Alb(X)$ is of relative dimension one.
\end{abstract}

\keywords{Albanese morphism, nef anticanonical divisor, positive characteristic}

\author{Tongji Gao}
\address[Tongji Gao]{Department of Mathematics, Southern University of Science and Technology, 1088 Xueyuan Rd, Shenzhen 518055, China.} \email{12231277@mail.sustech.edu.cn}

\author{Zhan Li}
\address[Zhan Li]{Department of Mathematics, Southern University of Science and Technology, 1088 Xueyuan Rd, Shenzhen 518055, China.} \email{lizhan@sustech.edu.cn}

\author{Lei Zhang}
\address[Lei Zhang]{School of Mathematical Science, University of Science and Technology of China, Hefei 230026, China.} \email{zhlei18@ustc.edu.cn}

\maketitle

\tableofcontents

\section{Introduction}\label{sec: intro}

Positivity of anticanonical divisors imposes strong restrictions on the geometry of algebraic varieties. Over the field of complex numbers, \cite{Cal57, Kaw85, Amb05} show that the Albanese morphism of a Calabi-Yau variety is a locally trivial fibration. This phenomenon is also conjectured to hold for the Albanese morphism of a K\"ahler manifold with nef anticanonical divisor \cite{DPS96}, and has been affirmatively established for projective manifolds by \cite{Cao19} and for K\"ahler manifolds by \cite{MWWZ25}. For similar results on log pairs with singularities, see \cite{Wan22, MW23}. Additionally, the MRC fibrations of such varieties exhibit analogous properties \cite{CH19, MW23, MWWZ25}. These structures are closely related to the (numerically) non-vanishing of anticanonical divisors \cite{LMPTX22, Mul23}.

In positive characteristic, similar results have been established assuming that the fibers have good singularities \cite{PZ19, Eji23, EP23}. However, wild fibrations in positive characteristic prevent the application of techniques for such generalizations. On the other hand, the wild fibration is a unique phenomenon of algebraic geometry in positive characteristic, setting it apart from algebraic geometry over complex numbers. The authors attempt to understand this phenomenon in \cite{CWZ23, CWZ24}. They first focus on varieties with nef anticanonical divisors and the Albanese morphism of relative dimension $1$. They prove that for such a variety $X$, the Albanese morphism $a_X: X \to A$ is a fibration, and for their purposes they establish a very precise structure of a threefold $X$ with $K_X \equiv 0$ \cite[Theorem 1.1]{CWZ24}: roughly speaking, there exists an isogeny $A' \to A$ such that the normalization of the reduced fiber product satisfies
\[
(X \times_A A')_{\mathrm{red}}^\nu \cong A' \times F,
\]
where $F$ is either an elliptic curve or $\mathbb{P}^1$.

Following \cite{ CWZ24}, the present note completes the last piece of this series of works by studying the explicit structures of the Albanese morphism of relative dimension $1$.
To be more precise, we work in the following setting.

\medskip

\begin{quotation}
\noindent {\bf (Setting $\dagger$)} Let $X$ be a smooth projective variety over an algebraically closed field $k$ of characteristic $p>0$. Assume that 
\begin{enumerate}
\item the resolution of singularities holds in dimension $<\dim X$,
    \item $-K_X$ is nef but not numerically trivial, and 
        \item the Albanese morphism $a_X: X \to A$ is of relative dimension one. 
\end{enumerate}
\end{quotation}

\medskip

Condition (3) implies that the arithmetic genus of the generic fiber $p_a(X_\eta)=0$ by the canonical bundle formula in \cite[Theorem 1.2, 1.3]{CWZ23}.

\begin{theorem}[{Theorem \ref{thm: structure-sep1} \& Theorem \ref{thm: structure-sep2}}]\label{thm: 1}
    In {\bf (Setting $\dagger$)}, if $a_X$ is separable, then there exists an isogeny $B \to A$ between abelian varieties of degree at most $2$ , such that 
    \[
    X \times_A B \simeq \Pp_B(\Gg)
    \] where $\Gg$ is a numerically flat vector bundle of rank $2$.

Besides, we have $h^0(X, \Oo_X(-2K_X)) > 0$. 
\end{theorem}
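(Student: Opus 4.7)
The plan is to first reduce $X$ to a $\mathbb{P}^1$-bundle after a degree-$\le 2$ isogeny cover of $A$, and then exploit that explicit structure to produce a section of $-2K_X$.

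First I would exploit the conic structure of the generic fiber. The canonical bundle formula of \cite{CWZ23} cited above gives $p_a(X_\eta)=0$, and together with separability of $a_X$ and smoothness of $X$ this makes $X_\eta$ a regular conic over $K(A)$. Its Brauer class in $\mathrm{Br}(K(A))$ is automatically $2$-torsion and therefore split by a separable quadratic extension $K'/K(A)$. The key technical point is to lift $K'$ to an isogeny of $A$: one should verify that this $2$-torsion Brauer class is unramified on $A$, using smoothness of $X$ in codimension one together with purity for the Brauer group, so that the corresponding $\mathbb{Z}/2$-cover extends to an \'etale double cover of $A$, which by the classification of finite \'etale covers of abelian varieties is an isogeny $B\to A$ of degree dividing $2$ (in characteristic $2$ one may have to allow a purely inseparable alternative). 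After base change, $X_B := X\times_A B\to B$ has $\mathbb{P}^1$ generic fiber, and by smoothness of $X_B$ this promotes to a Zariski $\mathbb{P}^1$-bundle $X_B\cong \mathbb{P}_B(\mathcal{G})$ for some rank-$2$ bundle $\mathcal{G}$ on $B$.

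For numerical flatness of $\mathcal{G}$, I would use that on $X_B$ one has $-K_{X_B} = 2\xi - \pi^*\det\mathcal{G}$ with $\xi = \mathcal{O}_{\mathbb{P}(\mathcal{G})}(1)$, which is nef. Replacing $\mathcal{G}$ by a twist $\mathcal{G}\otimes L$, which does not alter the projectivization, one can arrange $\det\mathcal{G}$ to be $2$-torsion in $\mathrm{Pic}(B)$ and in particular numerically trivial; then $\xi$ itself is nef, so $\mathcal{G}$ is a nef rank-$2$ bundle with numerically trivial determinant on an abelian variety, and such a bundle is numerically flat.

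For the second assertion $h^0(X,-2K_X) > 0$, I pass back to $X_B$ and compute
\[
\pi_*\mathcal{O}_{X_B}(-2K_{X_B}) \;=\; S^4\mathcal{G}\otimes(\det\mathcal{G})^{-2},
\]
a rank-$5$ numerically flat bundle on $B$. Using the classification of rank-$2$ numerically flat bundles on an abelian variety---either $\mathcal{G}\cong L_1\oplus L_2$ with $L_i\in\mathrm{Pic}^0(B)$, or a self-extension $0\to L\to\mathcal{G}\to L\to 0$ with $L\in \mathrm{Pic}^0(B)$---one checks that the natural filtration of this sheaf always carries a distinguished $\mathcal{O}_B$-subquotient, and a short cohomological argument produces a nonzero global section on $B$ and hence on $X_B$. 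The step I expect to be the main obstacle is the final descent of this section from $X_B$ to $X$: one must show that it is invariant under the Galois involution of $B/A$. That involution swaps the two rulings of the conic bundle and hence permutes the off-diagonal graded pieces $L_1^{\pm i}L_2^{\mp i}$ while fixing the distinguished $\mathcal{O}_B$-subquotient, and one has to verify that this Galois fixedness actually lifts through the extension structure of $S^4\mathcal{G}\otimes(\det\mathcal{G})^{-2}$ to yield a genuine section of $-2K_X$.
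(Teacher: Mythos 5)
The central step of your argument --- producing the isogeny of degree at most $2$ that splits the conic bundle --- has a genuine gap, and it is exactly the step where the nefness of $-K_X$ has to enter. First, smoothness of the total space $X$ does not make the Brauer class of $X_\eta$ unramified on $A$: a smooth variety can perfectly well fiber as a conic bundle with nonempty discriminant divisor (standard conic bundles over surfaces do), and the class ramifies along the discriminant. What one actually needs is that \emph{every} fibre of $a_X$ is a smooth conic; this is Proposition \ref{prop: somooth morph}, whose proof already uses Theorem \ref{thm: iso fibers} (all closed fibres isomorphic), hence strong $F$-regularity and the nefness of $-K_X$. Second, and more seriously, even an unramified $2$-torsion class in $\mathrm{Br}(A)$ need not be split by an isogeny of degree $2$: for an \'etale double cover $\pi\colon B\to A$ attached to $\phi\in H^1(A,\Zz/2)$, the kernel of $\pi^*$ on $\wedge^2H^1(A,\Zz/2)$ is the subspace $\phi\wedge H^1(A,\Zz/2)$, which consists of decomposable classes only; a non-decomposable class (these exist once $\dim A\geq 2$) lies in no such subspace for any $\phi$. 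So ``$2$-torsion and unramified $\Rightarrow$ split by a degree-$2$ isogeny'' is not a Brauer-theoretic formality --- it is essentially the content of the theorem. The paper extracts the isogeny from positivity: Theorem \ref{thm: cwz24} combined with Proposition \ref{prop: nonzero V0} (non-vanishing of $S^0(X_\eta,-K_{X_\eta})$ for a genus-zero curve) yields $-K_X+a_X^*L\sim B\geq 0$ for some $L\in\Pic^0(A)$, and Proposition \ref{prop: twisted section} shows via adjunction and Theorem \ref{thm: cover of abelian variety} that the horizontal part of $B$ is a multisection of relative degree $\leq 2$ which is itself an abelian variety isogenous to $A$; base-changing along that multisection is what produces the section of the conic bundle. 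Your proposal only invokes the nefness of $-K_X$ afterwards, for numerical flatness, so the essential input is missing at the decisive moment.

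Two secondary issues. Your normalization ``twist $\Gg$ so that $\det\Gg$ is $2$-torsion'' presupposes that $c_1(\Gg)$ is divisible by $2$ in $\mathrm{NS}(B)$ up to numerical equivalence, which you do not justify; with the paper's choice $\Gg=(a_X)_*\Oo_X(T)$ for $T$ the (multi)section one gets $0\to\Oo_B\to\Gg\to\Ll\to 0$ with $\Ll\in\Pic^0(B)$, so numerical flatness is immediate from Theorem \ref{thm: num flat on abelian}. And for $h^0(X,\Oo_X(-2K_X))>0$, the Galois-descent difficulty you flag at the end is real in your setup but entirely avoidable: the paper takes the norm of a nonzero section of $\pi^*\Oo_X(-K_X)$ along the degree-$2$ cover $\pi\colon X_B\to X$, which directly produces a nonzero section of $\Oo_X(-2K_X)$ with no equivariance required.
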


\begin{theorem}[{Theorem \ref{thm: structure-inseparable} \& Theorem \ref{thm: classfy F}}]\label{thm: 2}
    In {\bf (Setting $\dagger$)}, if $a_X$ is inseparable, then
    
(i) there exists an isogeny $A' \to A$ of abelian varieties, which is purely inseparable of degree 4, such that $X\times_A A'$ is not reduced, and the normalization $X'$ of $(X\times_A A')_{\rm red}$ is isomorphic to $A'\times \Pp_k^1$;

(ii) $X\cong (A'\times \Pp_k^1)/\Ff$ where $\Ff$ is a smooth rank one foliation of $A'\times \Pp_k^1$ such that
$\Ff \sim h^*\mathcal{O}_{\Pp^1}(-1)$ ($\Ff$ has a full description);
    
(iii) $h^0(X, \Oo_X(-2K_X))>0$.
\end{theorem}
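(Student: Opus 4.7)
The overall plan is to construct a purely inseparable trivializing isogeny $A'\to A$, recover $X$ from the resulting product as a foliation quotient, and then descend pluri-anticanonical sections. The first step is to pin down the characteristic. By the canonical bundle formula \cite[Theorem 1.2, 1.3]{CWZ23}, $p_a(X_\eta)=0$, and inseparability of $a_X$ forces $X_\eta$ to be a regular but not geometrically reduced curve of arithmetic genus $0$ over $K(A)$. Such curves exist only in characteristics $2$ and $3$; combining this with $-K_X$ nef and the relative anticanonical degree implied by the canonical bundle formula, I expect to conclude $p=2$ and identify $X_\eta$ as a standard wild conic whose geometric reduction is $\mathbb{P}^1$ with multiplicity~$2$.

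Next I would construct $A'$ as the composition of two degree-$2$ purely inseparable isogenies. The first, a Frobenius factor of $A$, makes the reduction of $X\times_A A$ a smooth $\mathbb{P}^1$-fibration over a new abelian variety; the second trivializes the resulting $\mathbb{P}^1$-bundle, which at this stage is the projectivization of a numerically flat rank-$2$ bundle on an abelian variety and is trivializable by a further purely inseparable isogeny in view of the tools developed in \cite{CWZ24}. The composition has degree $4$, the fibre product $X\times_A A'$ is non-reduced, and one obtains $X':=(X\times_A A')^{\nu}_{\mathrm{red}}\cong A'\times\mathbb{P}^1_k$, establishing (i).

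For (ii), the canonical morphism $\pi\colon X'\to X$ is purely inseparable of degree $p=2$ and is therefore given by a smooth $p$-closed rank-$1$ foliation $\mathcal F\subset T_{X'}$ via $\mathcal F=\ker(T_{X'}\to\pi^*T_X)$. Writing $h$ for the second projection and decomposing $T_{X'}=\mathrm{pr}_1^*T_{A'}\oplus h^*T_{\mathbb{P}^1}$, I would classify such $\mathcal F$ under the constraints that (a) the composition $X'\to X\to A$ reproduces the base change of $a_X$, (b) $\mathcal F$ is $p$-closed with smooth quotient, and (c) $-K_X$ is nef. This should narrow $\mathcal F$ down to the stated numerical class $h^*\mathcal O_{\mathbb{P}^1}(-1)$ together with an explicit form of its Pfaff field. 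Item (iii) then follows from the foliation canonical formula $\pi^*K_X=K_{X'}+(p-1)\det\mathcal F$: with $p=2$, $K_{X'}\sim -h^*\mathcal O_{\mathbb{P}^1}(2)$, and the class of $\mathcal F$ just identified, one obtains $\pi^*(-2K_X)\sim h^*\mathcal O_{\mathbb{P}^1}(6)$; computing the $\mathcal F$-invariant subspace of $H^0(X',h^*\mathcal O_{\mathbb{P}^1}(6))$ (equivalently, the image of the injection $\pi^*\colon H^0(X,-2K_X)\hookrightarrow H^0(X',\pi^*(-2K_X))$) then yields a non-zero section of $-2K_X$.

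The main obstacle is the classification step within (ii). Smooth rank-$1$ foliations on $A'\times\mathbb{P}^1$ form a rich family, and ruling out all but those whose quotient is smooth and whose induced Albanese datum agrees with $a_X$ will require a careful case analysis of how $\mathcal F$ interacts with both factors. Once the correct foliation is identified, (i) and (iii) reduce to the essentially formal computations sketched above, so the weight of the argument concentrates on this classification.
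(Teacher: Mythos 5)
Your overall shape (a degree-$4$ purely inseparable isogeny, recovery of $X$ as a foliation quotient, descent of anticanonical sections) matches the paper, but the construction of $A'$ — the heart of the proof — is both unjustified and structurally incorrect. First, your intermediate picture is wrong: by Tanaka's classification of wild conics (Theorem \ref{thm: Tanaka's classfy P1}(4)), the generic fiber $X_\eta$ needs a degree-$4$ purely inseparable extension of $K(A)$ before its normalization becomes $\Pp^1$; after a single degree-$2$ purely inseparable base change the fiber becomes reduced but \emph{not normal}, and its normalization lives over a \emph{further} degree-$2$ extension arising from Stein factorization. So there is no stage at which "$(X\times_A A_1)_{\rm red}$ is a smooth $\Pp^1$-fibration" over a degree-$2$ cover, and your second step (trivializing a numerically flat $\Pp^1$-bundle by another isogeny) addresses a situation that never occurs — and in any case such a trivializing isogeny need not be purely inseparable of degree $2$, so the total degree $4$ would not follow. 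Second, you give no mechanism for producing the first degree-$2$ isogeny: "a Frobenius factor of $A$" is not canonical (there are many height-one subgroup schemes), and the paper instead manufactures it as the horizontal divisor $T\sim -K_X+f^*L$ coming from the Frobenius-trace non-vanishing (Theorem \ref{thm: cwz24} plus Proposition \ref{prop: nonzero V0}, packaged in Proposition \ref{prop: twisted section}), which in turn requires first proving that $-K_X$ is $f$-ample and semiample via the base change along $A^{1/p}\to A$ and the movable-part analysis of $\det\Omega^1_{Y/X}$ (Proposition \ref{prop: semiample}). None of this input appears in your sketch, and without it the existence of the required multisection/isogeny is simply asserted. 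Likewise, the triviality $X'\cong A'\times\Pp^1$ is proved in the paper by showing the movable part of $|\det\Omega^1_{X'/X}|$ is base-point free of relative degree one and its fibers are sections of $f'$ (via Proposition \ref{prop: cwz23 prop 5.2}), not by a bundle-trivializing isogeny.

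Two smaller points. In (iii) you use $\pi^*K_X=K_{X'}+(p-1)\det\mathcal F$, but the correct formula (Proposition \ref{prop:can-foliation}) is $\pi^*K_X\sim K_{X'}-(p-1)\det\mathcal F\sim K_{X'}+(p-1)\det\Omega^1_{X'/X}$; with $\det\mathcal F\sim h^*\mathcal O_{\Pp^1}(-1)$ this gives $\pi^*(-2K_X)\sim h^*\mathcal O_{\Pp^1}(2)$, not $h^*\mathcal O_{\Pp^1}(6)$. Moreover "computing the image of $\pi^*\colon H^0(X,-2K_X)\hookrightarrow H^0(X',\pi^*(-2K_X))$" is circular as a non-vanishing argument; the paper descends a section of $-\pi^*K_X\sim h^*\mathcal O_{\Pp^1}(1)$ by taking its norm under the degree-$2$ map $\pi$, which lands directly in $H^0(X,-2K_X)$. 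Finally, the classification of the foliation (Theorem \ref{thm: classfy F}) — which you defer entirely to "a careful case analysis" — is a substantial explicit computation with the $p$-closedness condition on $\delta=a(t)\alpha+b(t)\beta+c(t)\partial_t$ and cannot be regarded as established by the proposal.
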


We now outline the contents of this paper. Section~\ref{sec: pre} provides essential background material and sets up the notation used throughout. Section~\ref{sec: related results} surveys recent results concerning varieties with nef anticanonical divisors in characteristic $p$. In Section~\ref{sec: geo reduced fibers}, we examine the case where the Albanese morphism is separable, distinguishing three subcases. For the first two, we establish a projective bundle structure on $X$ and prove the non-vanishing of $-K_X$ (Theorem~\ref{thm: structure-sep1}). In the third case, we show that, after an isogeny base change of degree $2$, the situation reduces to the preceding cases (Theorem~\ref{thm: structure-sep2}). Section~\ref{sec: geo non-reduced fibers} is devoted to the study of inseparable Albanese morphisms. We prove that, after an isogeny base change, passage to the reduced scheme structure, and normalization yield a product structure $A'\times \Pp_k^1$, namely, $X$ arises as the quotient of $A' \times \mathbb{P}_k^1$ by a rank one foliation $\mathcal{F}$ (Theorem~\ref{thm: structure-inseparable}). Furthermore, when $X$ is threefold, we classify all possible such foliations on $A' \times \mathbb{P}_k^1$ (Theorem~\ref{thm: classfy F}).

\medskip
\subsection*{Acknowledgments} Tongji Gao and Zhan Li are partially supported by the NSFC (No.12471041) and the Guangdong Basic and Applied Basic Research Foundation (No.2024A1515012341). Lei Zhang is partially supported by NSFC (No.\ 12122116 and No. 12471495) and CAS Project for Young Scientists in Basic Research, Grant No.\ YSBR-032. We thank Ni An for discussing foliations and Vladimir Lazi\'c for pointing out the reference \cite{Mul23}.

\section{Preliminary}\label{sec: pre}

In this section, we gather relevant concepts and results that will be used later.

\subsection{Notation and convention}
Let $k$ be an algebraically closed field of $\chara k=p>0$. A variety $X$ over $k$ is a separated integral scheme of finite type over $k$. A curve is a $k$-variety of dimension $1$. For a $\Qq$-Cartier divisor $D$ on $X$, we write $\kappa(X, D)$ (resp. $\nu(X, D)$) for the Kodaira dimension (resp. numerical dimension) of $D$. We also write $D \succcurlyeq 0$ if $\kappa(X, D) \geq 0$. A projective morphism $f: X \to S$ between normal varieties is called a fibration if $f_*\Oo_X=\Oo_S$. For a base change $T \to S$, we set $X_T \coloneqq X \times_S T$. 

A variety $X$ is of maximal Albanese dimension if $\dim a_X(X) = \dim X$, where $a_X: X \to A$ is the Albanese morphism.

\subsection{Foliations and purely inseparable morphisms}\label{sec: foliation}
Let $Y$ be a normal variety over $k$.
By a {\it foliation} on $Y$ we mean a saturated subsheaf $\mathcal F\subseteq \mathcal T_Y$ of the tangent bundle that is $p$-closed and involutive. Let
$$\mathrm{Ann}(\mathcal F):=\{g\in \mathcal{O}_Y~|~\partial(g) =0, \forall \partial \in \mathcal F\} \subseteq \mathcal{O}_Y$$
which is a subsheaf of ring containing $\mathcal{O}_Y^p$.
There is a one-to-one correspondence  (cf. \cite{Eke87}): \[
    \newcommand\mysatop[2]{\genfrac{}{}{0pt}{}{#1}{#2}}
    \left\{\mysatop{\text{foliations}}{\mathcal F\subseteq \mathcal T_Y}\right\}
    \leftrightarrow
    \left\{\vcenter{\hbox{finite purely inseparable morphisms $\pi\colon Y\to X$}%
                    \hbox{over $k$ of height one with $X$ normal}}\right\},
\]
which is given by
$$\mathcal F ~\mapsto ~\pi\colon Y \to \mathrm{Spec}(\mathrm{Ann}(\mathcal F)) \text{ \ and \ } \pi\colon Y\to X~\mapsto \mathcal F_{Y/X}:=\Omega_{X\to Y}^{\perp}$$
where $\Omega_{X \to Y}:=\mathrm{im }(\pi^*\Omega_X^1 \to \Omega_Y^1)$ and $\Omega_{X\to Y}^{\perp}$ is the sheaf of tangent vectors in $\mathcal{T}_Y$ annihilated by $\Omega_{X \to Y}$.
As a side note, $(\Omega_{Y/X}^1)^\vee \cong \mathcal F_{Y/X}$, and under the above correspondence $\mathrm{rank} (\mathcal F_{Y/X}) =\log_p\deg f$.
Let $y\in Y$ be a smooth point and $x:=\pi(y)$.
A foliation $\mathcal{F}\subseteq\mathcal T_Y$ is said to be \emph{smooth} at $y$ if around $y$ the subsheaf $\mathcal F$ is a subbundle, namely both $\mathcal{F}$ and $\mathcal{T}_{Y}/\mathcal{F}$ are locally free.
It is known that (\cite[p.142]{Eke87}) \[
  \hbox{$\mathcal F$ is smooth at $y$ $\Leftrightarrow$ $Y/\mathcal F$ is smooth at $x$ $\Rightarrow$ $\Omega^1_{Y/X}$ is locally free at $y$} .
\]

First, recall the following well-known result (cf. \cite{Eke87}).
\begin{proposition}\label{prop:can-foliation}
  Let $\pi\colon Y\to X$ be a finite purely inseparable morphism of height one between normal varieties.
  Then
  \begin{equation}\label{eq:pullback-cano}
    \pi^*K_X \sim K_Y - (p-1)\det \mathcal F_{Y/X}\sim  K_Y + (p-1)\det \Omega_{Y/X}^1.
  \end{equation}
\end{proposition}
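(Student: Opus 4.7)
My plan is to reduce the identity to a local calculation on the smooth locus. Since both $Y$ and $X$ are normal, a linear equivalence of Weil divisors on $Y$ can be verified on the complement of any codimension-two subset; I would restrict to an open $U\subseteq Y$ lying in the smooth locus of $Y$ with $\pi(U)$ in the smooth locus of $X$ and with $\mathcal{F}|_{U}$ smooth (a subbundle of $\mathcal{T}_{U}$ with locally free quotient). Such a $U$ with complement of codimension $\ge 2$ exists because $\mathcal{F}$ is saturated, and a torsion-free sheaf on a smooth variety is locally free in codimension one; moreover, by the correspondence recalled in the text, smoothness of $\mathcal{F}|_U$ forces $\pi(U)$ to be smooth. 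On $U$, the identification $(\Omega^{1}_{Y/X})^{\vee}\cong \mathcal{F}_{Y/X}$ yields the second equivalence immediately, so the substantive content is the first.

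For the first equivalence I would use the four-term exact sequence
\[
0\longrightarrow K\longrightarrow \pi^{*}\Omega^{1}_{X}\longrightarrow \Omega^{1}_{Y}\longrightarrow \Omega^{1}_{Y/X}\longrightarrow 0
\]
on $U$, where $K:=\ker(\pi^{*}\Omega^{1}_{X}\to \Omega^{1}_{Y})$ is locally free of rank $r=\operatorname{rank}\mathcal{F}$. Splitting it through the image $\Omega_{X\to Y}$ and taking determinants in the two short exact sequences yields
\[
\pi^{*}K_{X}-K_{Y}\sim [\det K]-\det\Omega^{1}_{Y/X},
\]
so the problem reduces to the single identity $[\det K]\sim p\cdot \det\Omega^{1}_{Y/X}$. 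In étale-local coordinates $x_{1},\ldots,x_{n}$ on $U$ with $\mathcal{F}=\langle\partial_{x_{1}},\ldots,\partial_{x_{r}}\rangle$ and $\mathcal{O}_{X}=k[x_{1}^{p},\ldots,x_{r}^{p},x_{r+1},\ldots,x_{n}]$, and writing $y_{j}=x_{j}^{p}$ for $j\le r$, the sheaf $K$ is freely generated by $\pi^{*}dy_{1},\ldots,\pi^{*}dy_{r}$ and $\Omega^{1}_{Y/X}$ by $[dx_{1}],\ldots,[dx_{r}]$. Under a foliation-preserving linear change of frame $x'_{j}=\sum_{k\le r}A_{jk}x_{k}$ with $A_{jk}\in \mathcal{O}_{X}$, the characteristic-$p$ identity $(\sum_{k}A_{jk}x_{k})^{p}=\sum_{k}A_{jk}^{p}y_{k}$ forces $\pi^{*}dy'_{j}=\sum_{k}A_{jk}^{p}\pi^{*}dy_{k}$, whereas $[dx'_{j}]=\sum_{k}A_{jk}[dx_{k}]$; the transition factor for $\det K$ is therefore $\det(A_{jk}^{p})=(\det A)^{p}$ by the Frobenius-compatibility of the determinant, which is exactly the $p$-th power of the transition factor for $\det\Omega^{1}_{Y/X}$, giving $\det K\cong (\det\Omega^{1}_{Y/X})^{\otimes p}$ under these changes.

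The main obstacle is to make this identification coordinate-independent: general foliation-preserving changes of frame on $U$ can mix foliation and transverse directions non-linearly, so the linear model above does not immediately globalize. To handle this uniformly I would exploit the height-one hypothesis through the Frobenius factorization $F_{Y}=f\circ\pi$ with $f\colon X\to Y$ induced by $a\mapsto a^{p}$, which organizes the $p$-th power behavior intrinsically and supplies a canonical isomorphism $\det K\cong (\det\Omega^{1}_{Y/X})^{\otimes p}$; alternatively, one can follow Ekedahl's treatment via the inverse Cartier operator to reach the same conclusion. Combining this with the formula of the previous paragraph yields $\pi^{*}K_{X}\sim K_{Y}+(p-1)\det\Omega^{1}_{Y/X}$ on $U$, and normality of $Y$ propagates the linear equivalence to all of $Y$.
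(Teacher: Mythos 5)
The paper never proves this proposition: it is stated as a recalled ``well-known result'' with a bare citation to Ekedahl, so there is no in-text argument to measure yours against. Judged on its own, your reduction is the standard and correct one — pass to a big open set where $Y$, $X$ and the foliation are all smooth (your justification via normality and saturation is fine), take determinants in the four-term sequence, identify $(\Omega^1_{Y/X})^\vee\cong\mathcal{F}_{Y/X}$ for the first equivalence, and reduce everything to the single identity $\det K\cong(\det\Omega^1_{Y/X})^{\otimes p}$ for $K=\ker(\pi^*\Omega^1_X\to\Omega^1_Y)$. The one place you stop short is exactly the step you flag yourself: you verify the $p$-th power relation between transition factors only for linear changes of frame with coefficients in $\mathcal{O}_X$, and then assert that ``Frobenius factorization'' or Ekedahl's Cartier-operator argument supplies the canonical isomorphism in general. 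As written, that pivotal identity is cited rather than proved, so the argument is not yet self-contained.

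The good news is that the obstacle you describe is not a real one, and the gap closes with the same computation you already did. Let $x_1,\dots,x_n$ and $x'_1,\dots,x'_n$ be two adapted coordinate systems on $U$ (so $\mathcal{F}$ is spanned by $\partial_{x_1},\dots,\partial_{x_r}$, respectively by $\partial_{x'_1},\dots,\partial_{x'_r}$, and $\mathcal{O}_X$ is generated over $\mathcal{O}_Y^{\,p}$ by the transverse coordinates), and write $x'_i=\phi_i(x_1,\dots,x_n)$. In $\Omega^1_{Y/X}$ one has $[dx'_i]=\sum_{j\le r}(\partial\phi_i/\partial x_j)\,[dx_j]$ for $i\le r$, since the differentials of functions on $X$ die in the quotient. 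On the other side, $y'_i=(x'_i)^p=\phi_i^p$ is a polynomial in the coordinates $y_1,\dots,y_r,x_{r+1},\dots,x_n$ of $X$, and computing $dy'_i$ in $\Omega^1_X$ every partial derivative in a transverse variable acquires a factor of $p$ and vanishes, while $\partial(\phi_i^p)/\partial y_j=(\partial\phi_i/\partial x_j)^p$; hence $dy'_i=\sum_{j\le r}(\partial\phi_i/\partial x_j)^p\,dy_j$. So for an \emph{arbitrary} (in particular non-linear) adapted change of coordinates, the transition matrix of $K$ is the entrywise $p$-th power of the $r\times r$ foliation block of the Jacobian, and $\det K\cong(\det\Omega^1_{Y/X})^{\otimes p}$ follows exactly as in your linear model. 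Adding that paragraph makes your proof complete without any appeal to Ekedahl beyond the local normal form for a smooth $p$-closed foliation.
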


To treat inseparable fibrations, we shall use the following result (\cite[Proposition 3.3]{CWZ23}),  which is a slight generalization of \cite[Theorem~1.1]{JW21}.
\begin{proposition}\label{prop: CWZ23prop3.3}
  Let $f\colon X\to S$ be a fibration, $\tau\colon T\to S$ a finite purely inseparable morphism of height one.
Let $Y$ be the normalization of $(X_T)_{\rm red}$.
Consider the following commutative diagram
$$\xymatrix{&Y\ar[r]^<<<<<\nu \ar@/^8mm/[rrr]|{\,\pi\,} \ar[rrd]_g\ar[r] &(X_T)_{\rm red}\ar[r] &X_T\ar[r]\ar[d]^{f_T} &X\ar[d]^{f}\\
&&  &T\ar[r]^{\tau} &S\rlap{.} \\
}$$
  Let $\Gamma \subseteq H^0(T,\Omega_{T/S}^1)$ be a finite-dimensional $k$-vector subspace.
  Assume that there is an open subset $U\subseteq T$ such that $\Omega_{U/S}^1$ is locally free and globally generated by $\Gamma$.
  Set $r= \mathrm{rank} \Omega_{Y/X}^1$ and $\Gamma_Y= \mathrm{Im}\bigl(\bigwedge^r \Gamma \to H^0(Y,\det \Omega_{Y/X}^1)\bigr)$.
  Let $\mathfrak M +F \subseteq \lvert \det \Omega_{Y/X}^1 \rvert$ be the sub-linear system determined by $\Gamma_Y$ with the fixed part $F$ and the movable part $\mathfrak M$.
  Then
  \begin{itemize}
    \item[(1)] $\nu(F)|_{(X_U)_{\mathrm{red}}}$ is supported on the codimension one part of the union of the non-normal locus of $(X_U)_{\rm red}$ and the exceptional locus over $U$;
    \item[(2)] the $g$-horizontal part $M_{h}$ of $M \in \mathfrak M$ is zero if and only if $X_{K(T)}$ is reduced.
  \end{itemize}
\end{proposition}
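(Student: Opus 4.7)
The plan is to construct, from the functoriality of K\"ahler differentials, a canonical morphism
\[
\phi\colon g^*\Omega^1_{T/S} \longrightarrow \Omega^1_{Y/X},
\]
and then analyze it on (a) a good open of $Y$ lying over $U$, yielding (1), and (b) at the generic point $\eta_Y$ of $Y$, yielding (2). From the commutative square $\tau\circ g=f\circ \pi$ the composition $g^*\Omega^1_T \to \Omega^1_Y \twoheadrightarrow \Omega^1_{Y/X}$ annihilates $g^*\tau^*\Omega^1_S=\pi^*f^*\Omega^1_S$, so it descends to the desired $\phi$. Equivalently, $\phi$ arises from the factorization $Y\xrightarrow{p} X_T\to X$ together with the identification $p^*\Omega^1_{X_T/X}= g^*\Omega^1_{T/S}$ coming from flat base change. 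Taking the $r$-th exterior power and then global sections produces the map $\bigwedge^r \Gamma \to H^0(Y, \det\Omega^1_{Y/X})$ whose image is $\Gamma_Y$, and the sub-linear system they cut out is $\mathfrak{M}+F$.

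For (1), set $V\subseteq g^{-1}(U)$ equal to the complement of the $\nu$-exceptional locus together with the $\nu$-preimage of the non-normal locus of $(X_U)_{\rm red}$. On $V$ the map $\nu$ is an isomorphism onto an open of the normal locus of $(X_U)_{\rm red}$. The closed immersion $(X_U)_{\rm red}\hookrightarrow X_U=X\times_S U$, combined with the flat-base-change identification $\Omega^1_{X_U/X}\cong q^*\Omega^1_{U/S}$ for $q\colon X_U\to U$, induces a surjection $g^*\Omega^1_{U/S}|_V \twoheadrightarrow \Omega^1_{Y/X}|_V$, which by naturality coincides with $\phi|_V$. Since $\Gamma$ globally generates $\Omega^1_{U/S}$, the subspace $\bigwedge^r\Gamma$ globally generates $\det\Omega^1_{Y/X}$ on $V$, so $F|_V=0$. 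Pushing forward by $\nu$ yields the stated description of $\mathrm{Supp}\,\nu(F)|_{(X_U)_{\rm red}}$.

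For (2), restrict $\phi$ to the generic point:
\[
\phi_{\eta_Y}\colon \Omega^1_{K(T)/K(S)}\otimes_{K(T)} K(Y) \longrightarrow \Omega^1_{K(Y)/K(X)}.
\]
This is always surjective, because $Y\to X_T$ factors through the closed immersion $(X_T)_{\rm red}\hookrightarrow X_T$ and the birational map $Y\to (X_T)_{\rm red}$, both of which are unramified at $\eta_Y$, so $\Omega^1_{Y/X_T}=0$ there. In the reduced case $K(Y)=K(X)\otimes_{K(S)} K(T)$ is a field, and the base-change formula for K\"ahler differentials forces $\phi_{\eta_Y}$ to be an isomorphism; combined with the global generation by $\Gamma$, this makes $\bigwedge^r\Gamma$ generate $\det\Omega^1_{Y/X}$ at $\eta_Y$, so a general $M\in\mathfrak{M}$ has no $g$-horizontal component and $M_h=0$. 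In the non-reduced case, $K(Y)$ is a proper quotient of $K(X)\otimes_{K(S)}K(T)$ by a non-trivial nilpotent ideal, so the rank of $\Omega^1_{K(Y)/K(X)}$ drops strictly below that of $g^*\Omega^1_{T/S}$; hence $\bigwedge^r\phi$ factors through $\det\Omega^1_{Y/X}\otimes\mathcal{O}_Y(-D)$ for a non-trivial effective $g$-horizontal divisor $D$, producing a non-zero $M_h$ after removing $F$.

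The hardest point is the non-reduced direction of (2): one has to show that the extra vanishing of $\bigwedge^r\phi$ at $\eta_Y$ defines a genuinely \emph{movable} horizontal divisor, rather than being absorbed into the fixed part $F$. This requires the precise local description of $K(Y)$ as the reduction of $K(X)\otimes_{K(S)} K(T)$ modulo nilpotents, and it is in essence the Frobenius-pullback multiplicity computation of \cite[Theorem~1.1]{JW21}, which yields the expected multiplicity $p-1$ along the horizontal direction and accounts for the discrepancy between the reduced and non-reduced cases.
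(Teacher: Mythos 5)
The paper does not actually prove this proposition --- it is quoted from \cite{CWZ23} (Proposition 3.3), itself a generalization of \cite{JW21} --- so I am comparing your argument with the standard one rather than with a proof in this text. Your construction of $\phi\colon g^*\Omega^1_{T/S}\to\Omega^1_{Y/X}$ and your treatment of (1) are essentially sound: on the open set $V$ where $\nu$ is an isomorphism onto the normal locus, $\phi$ is surjective, so $\bigwedge^r\Gamma$ generates $\bigwedge^r\Omega^1_{Y/X}|_V$, and since $\bigwedge^r M\to(\bigwedge^r M)^{**}$ is surjective at codimension-one points of a normal variety, no component of $F$ meets $V$.

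Both directions of (2), however, have genuine gaps. In the reduced direction you deduce $M_h=0$ from the fact that $\bigwedge^r\Gamma$ generates $\det\Omega^1_{Y/X}$ at $\eta_Y$; but generation at the generic point holds for any nonzero linear system, and even base-point-freeness along the generic fibre of $g$ would not prevent members from having horizontal components (a base-point-free pencil of horizontal divisors is the obvious counterexample). What reducedness actually buys is $r=n:=\mathrm{rank}\,\Omega^1_{T/S}$, so that $\phi$ is a generic isomorphism and $\det\Omega^1_{Y/X}\sim g^*\det\Omega^1_{T/S}+E$ with $E=\mathrm{div}(\det\phi)\geq 0$ \emph{independent of the section}; every $\mathrm{div}(s)$, $s\in\Gamma_Y$, then has the same horizontal part $E_h$, which is therefore absorbed into $F$ --- that is why $\mathfrak M$ is vertical. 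In the non-reduced direction your mechanism is backwards: if $\bigwedge^r\phi$ factored through $\det\Omega^1_{Y/X}(-D)$ for a horizontal $D$, then \emph{every} section would vanish along $D$, so $D$ would sit inside the fixed part $F$ and would prove nothing about $\mathfrak M$. What must be shown is the opposite: that the horizontal parts of $\mathrm{div}(s)$ are \emph{not} all equal. Concretely, choosing a $p$-basis $t_{i_1},\dots,t_{i_r}$ of $K(Y)/K(X)$ and writing the remaining $t_j$ as polynomials in it with coefficients in $K(X)$, the sections $dt_{j_1}\wedge\cdots\wedge dt_{j_r}$ differ by the $r\times r$ minors of the resulting Jacobian, and one has to prove these minors have genuinely different horizontal divisors. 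That computation is the actual content of \cite{JW21}; invoking it via the ``multiplicity $p-1$'' (which concerns the formula $\pi^*K_X\sim K_Y+(p-1)\det\Omega^1_{Y/X}$, not movability) leaves the key step unproved.
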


\subsection{Adjunction formula}

\begin{theorem}\label{thm: adjunction}
  Let $X$ be a normal variety and $S$ a prime Weil divisor of $X$.
  Let $S^\nu \to S$ be the normalization. Assume that $K_X + S$ is $\mathbb{Q}$-Cartier.
  Then
  \begin{enumerate}
    \item[(i)] There exists an effective $\Qq$-divisor $\Delta_{S^\nu}$ on $S^\nu$ such that
      \[
      (K_X+ S)|_{S^{\nu}} \sim_\Qq K_{S^{\nu}} + \Delta_{S^\nu}.
      \]
    \item[(ii)] Let $V \subset S^\nu$ be a prime divisor, then $\mathrm{coeff}_V \Delta_{S^\nu} = 0$ if and only if $X,S$ are both regular at the generic point of $V$.
    \item[(iii)] If the pair $(S^\nu, \Delta_{S^\nu})$ is strongly $F$-regular, then $S$ is normal.
  \end{enumerate}
\end{theorem}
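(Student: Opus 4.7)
The plan is to construct $\Delta_{S^\nu}$ via the classical \emph{different} (conductor-plus-residue) procedure. Fix $m>0$ so that $L := \Oo_X(m(K_X+S))$ is a line bundle, and let $c\colon S^\nu \to X$ denote the composite of the normalization with the inclusion. On the open locus $U \subseteq X$ where $X$ is regular and $S$ is Cartier, ordinary adjunction provides a canonical Poincar\'e residue isomorphism $\omega_X(S)|_{U \cap S} \cong \omega_{U \cap S}$; pulling back gives $c^{*}(K_X+S) \sim K_{S^\nu}$ over $c^{-1}(U)$, whose complement has codimension $\geq 1$ in $S^\nu$. I would then define $\Delta_{S^\nu}$ to be the unique $\Qq$-Weil divisor supported on $S^\nu \setminus c^{-1}(U)$ such that $c^{*}(K_X+S) \sim_{\Qq} K_{S^\nu} + \Delta_{S^\nu}$ holds globally; this establishes (i) once effectiveness is verified.

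For the effectiveness and for (ii), I would argue one codimension-one point $V \subset S^\nu$ at a time. Since $\Oo_{S^\nu,V}$ is a DVR, a local generator of $L$ near $c(V)$ produces, via residue, a local rational generator of $\omega_{S^\nu}^{\otimes m}$ at $V$ whose pole order divided by $m$ is precisely $\mathrm{coeff}_V \Delta_{S^\nu}$. A direct valuation-theoretic calculation then yields $\mathrm{coeff}_V \Delta_{S^\nu} \geq 0$, with equality if and only if $X$ is regular at $c(V)$ and $S$ is regular at the generic point of $V$. This step carries the bulk of the technical work; the main delicacy is keeping track of the contribution of the non-Gorenstein index of $X$ at $c(V)$ (which produces possibly fractional coefficients) versus the integer contribution arising when $S$ fails to be normal at the generic point of $V$.

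For (iii), strong $F$-regularity of $(S^\nu, \Delta_{S^\nu})$ forces $\lfloor \Delta_{S^\nu} \rfloor = 0$, since a prime divisor with integer coefficient $\geq 1$ in the boundary would immediately obstruct the splitting defining the SFR test ideal. On the other hand, if $S$ is not normal, then the conductor ideal cuts out a nontrivial codimension-one subscheme of $S^\nu$ (wherever the normalization map fails to be an isomorphism in codimension one), and by the local computation in (ii) the corresponding prime divisors appear in $\Delta_{S^\nu}$ with integer coefficient $\geq 1$, contradicting SFR. Hence $S^\nu \to S$ is an isomorphism in codimension one. Finally, SFR also implies $S^\nu$ is Cohen--Macaulay, so by Serre's criterion the finite birational map $S^\nu \to S$ must be an isomorphism globally, i.e., $S$ is normal. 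The main obstacle in the whole argument is the local identification in (ii) of the conductor-theoretic different with the adjunction different; once that linkage is in hand, (iii) follows essentially formally.
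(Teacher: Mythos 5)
For parts (i) and (ii), your residue/different construction is precisely the approach of the source the paper cites, namely \cite[Proposition 4.5]{Kol13}: one defines $\Delta_{S^\nu}$ by extending the Poincar\'e residue isomorphism from the locus where $X$ is regular and $S$ is Cartier, and then verifies effectiveness and the vanishing criterion by a local computation at each codimension-one point of $S^\nu$ (equivalently, codimension-two point of $X$). Your sketch is sound, with the understanding that the valuation-theoretic computation you defer is where all the work lies.

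Part (iii), however, has a genuine gap: your argument only excludes non-normality of $S$ \emph{in codimension one}. A variety can be regular in codimension one and still fail to be normal (it fails Serre's condition $S_2$); in that case $\nu\colon S^\nu\to S$ is finite, birational, and an isomorphism in codimension one, the conductor cuts out a subscheme of codimension $\geq 2$ in $S^\nu$, and nothing is contributed to $\Delta_{S^\nu}$, so the first half of your argument detects nothing. Your patch --- ``SFR implies $S^\nu$ is Cohen--Macaulay, so by Serre's criterion $\nu$ is an isomorphism'' --- applies Serre's criterion to the wrong object: what is needed is that $S$, not $S^\nu$, satisfies $S_2$, and a finite birational morphism from a smooth variety that is an isomorphism in codimension one need not be an isomorphism (consider $\mathrm{Spec}(k+\mathfrak m^2)$ for $\mathfrak m=(x,y)\subset k[x,y]$, normalized by $\mathbb{A}^2$). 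Ruling out exactly this failure of $S_2$ is the entire content of \cite[Theorem 1.4]{Das15}, whose proof runs through a different mechanism: one shows $\Delta_{S^\nu}$ dominates the conductor divisor, that $p^{-e}$-linear maps on $S^\nu$ compatible with the conductor descend to $S$, and that the resulting Frobenius splitting forces $\mathcal{O}_S\to\nu_*\mathcal{O}_{S^\nu}$ to be an isomorphism. (A smaller point: even at a divisorial conductor component $V$, statement (ii) only gives $\mathrm{coeff}_V\Delta_{S^\nu}>0$; to contradict $\lfloor\Delta_{S^\nu}\rfloor=0$ you need the sharper fact that $\Delta_{S^\nu}$ is bounded below by the conductor divisor, whose coefficients there are integers $\geq 1$ --- true, but it requires the more precise local computation rather than (ii) as stated.)
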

\begin{proof}
Please refer to \cite[Proposition 4.5]{Kol13} for statements (i) and (ii) and to  \cite[Theorem 1.4]{Das15} for (iii).
\end{proof}

\subsection{Numerical flat vector bundles}
\begin{definition}\label{def: num flat}
    Let $\Ee$ be a vector bundle on a projective variety. Then $\Ee$ is said to be numerically flat if $\Ee$ and $\Ee^\vee$ are both nef.
\end{definition}

Numerically flat vector bundles on abelian varieties have a particularly simple structure, as described by the following result.

\begin{theorem}[{\cite[Theorem 5.4]{Eji23}}]\label{thm: num flat on abelian}
    Let $\Ee$ be a vector bundle on an abelian variety $A$. Then the following are equivalent:
    \begin{enumerate}
        \item $\Ee$ is numerically flat;
        \item $\Ee$ is obtained as iterated extensions of numerically trivial line bundles;
        \item $\Ee\simeq\bigoplus _iU_i\otimes\Ll_i$, where each $U_i$ is a unipotent vector bundle and each $\Ll_i$ is an numerically trivial line bundle.
    \end{enumerate}
\end{theorem}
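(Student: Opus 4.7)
The easy implications are $(3)\Rightarrow(2)\Rightarrow(1)$. For $(3)\Rightarrow(2)$, each unipotent $U_i$ is by definition an iterated extension of $\Oo_A$, so $U_i\otimes\Ll_i$ is an iterated extension of $\Ll_i$; a direct sum of such bundles is again an iterated extension of numerically trivial line bundles. For $(2)\Rightarrow(1)$, a numerically trivial line bundle is numerically flat, and numerical flatness is preserved under extensions: nefness passes through short exact sequences, and dualising the sequence reduces nefness of the dual to nefness of the duals of the outer terms.

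The substantive direction is $(1)\Rightarrow(3)$. My plan is to route through \emph{homogeneity} of $\Ee$. First, from numerical flatness I would extract that $\Ee$ is strongly semistable with numerically vanishing Chern classes. The Chern class vanishing is the standard computation that when $\Ee$ and $\Ee^\vee$ are both nef, every Schur polynomial in the Chern roots pairs non-negatively with products of ample classes; applying this to both bundles forces $c_i(\Ee)\cdot H^{n-i}=0$ for all $i$ and any ample $H$. Strong semistability then holds because $(F^m)^*\Ee$ remains numerically flat for every $m\geq 0$, so the Bogomolov discriminant of every Frobenius pull-back vanishes; together with Langer's form of the Bogomolov inequality in characteristic $p$, this rules out instability.

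Next I would invoke the positive-characteristic analogue of the Mehta--Nori theorem: a strongly semistable vector bundle on an abelian variety with numerically trivial Chern classes is \emph{homogeneous}, that is, $t_a^*\Ee\simeq \Ee$ for every closed point $a\in A$. The argument uses the Fourier--Mukai transform: such bundles have transforms set-theoretically supported on a finite subset $\{\Ll_1,\dots,\Ll_r\}\subseteq\mathrm{Pic}^0(A)$, and this support must be invariant under the natural translation action on itself, hence gives the translation-invariance of $\Ee$. The structure theorem of Mukai for homogeneous vector bundles on an abelian variety then yields the decomposition $\Ee\simeq \bigoplus_i U_i\otimes\Ll_i$ with the $\Ll_i$ the support points of the transform and the $U_i$ unipotent; this is exactly (3).

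The main obstacle is the passage to homogeneity in characteristic $p$. In characteristic zero one can rely on Simpson's correspondence and on representations of $\pi_1(A)$, but in positive characteristic neither is available, and one must substitute an argument driven by strong semistability, Frobenius pull-backs, and Langer's semistability theory; this is the technical core supplied by \cite[Theorem 5.4]{Eji23}, which we cite rather than reprove.
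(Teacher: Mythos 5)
The paper does not prove this statement itself: it is quoted verbatim as \cite[Theorem 5.4]{Eji23}, so the ``official'' argument is exactly the citation that you yourself fall back on for the substantive implication (1)$\Rightarrow$(3). Your treatment --- checking the elementary implications (3)$\Rightarrow$(2)$\Rightarrow$(1) directly (both steps are correct) and deferring the hard direction to Ejiri, with a sketch in the spirit of the known strong-semistability/homogeneity arguments --- is therefore correct and essentially the same approach as the paper's.
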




  
\subsection{Curves with arithmetic genus $p_a(X) = 0$}
\begin{theorem}\label{thm: Tanaka's classfy P1}
Let $X$ be a normal projective integral curve defined over a field $K$ with $H^0(X,\Oo_X) = K$ and $H^1(X,\Oo_X) = 0$. Then the following statements hold.
\begin{enumerate}
    \item The degree of the canonical divisor $\deg_K K_X = -2$.
    \item The curve $X$ is isomorphic to a conic in $\Pp^2_K$. Moreover, $X \simeq \Pp^1_K$ if and only if it has a $K$-rational point.
    \item Either $X$ is a smooth conic or $X$ is geometrically non-reduced. In the latter case, we have $\chara K = 2$, and $X$ is isomorphic to the curve defined by a quadric $s x^2+t y^2+z^2 =0$ for some $s,t \in K\backslash K^2$.
    \item Assume that $X$ is geometrically non-reduced. Let $P\in X$ be a closed point such that $[k(P):K] =2$. Then $X\times_Kk(P)$ is reduced but not normal. Let $\nu: X'\to X\times_Kk(P)$ be the normalization morphism and $K'=H^0(X',\mathcal{O}_{X'})$. Then $[K':K]= 4$, $X\times_{\mathrm{Spec} K} \mathrm{Spec} K'$ is not reduced and $X' \cong \Pp^1_{K'}$.
     Furthermore, the induced morphism $$X' \rightarrow (X\times_{\mathrm{Spec} K} \mathrm{Spec} K')_{\rm red}^\nu$$
     is an isomorphism.
\end{enumerate}
\end{theorem}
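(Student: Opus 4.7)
\emph{Plan.} For (1), Riemann--Roch on the Cohen--Macaulay curve $X$ combined with Serre duality gives $\chi(\mathcal{O}_X)=1$ and $\chi(\omega_X)=h^1(\mathcal O_X)-h^0(\mathcal O_X)=-1$, whence $\deg K_X=-2$ via $\chi(\omega_X)=\deg K_X+1$. For (2), Riemann--Roch with the vanishing $h^1(-K_X)=h^0(2K_X)=0$ (since $\deg 2K_X=-4<0$) yields $h^0(-K_X)=3$. The linear system $|-K_X|$ is base-point-free: if $P$ were a base point then $h^0(-K_X-P)=3$, but $-K_X-P$ has degree $2-[k(P):K]\leq 1$, forcing $h^0\leq \deg+1\leq 2$, a contradiction. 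The resulting morphism $\phi\colon X\to \mathbb{P}^2_K$ has irreducible image $C$ of degree $2$ (linear independence of the three sections excludes a linear image, and the degree identity $2=\deg(-K_X)=(\deg C)(\deg \phi)$ forces $\phi$ to be birational). Normality of $X$ gives $X\cong C^\nu$, and the equality $\chi(\mathcal O_C)=1=\chi(\mathcal O_{C^\nu})$ (first from $p_a=0$ for a plane conic, second by hypothesis) forces $C=C^\nu$, so $X\cong C$. The rationality criterion is immediate: given $P\in X(K)$, $|P|$ defines a degree $1$ map $X\to \mathbb{P}^1_K$, hence an isomorphism.

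\medskip

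For (3), I classify irreducible plane conics over $K$. When $C$ is geometrically reduced, $C_{\bar K}$ is an irreducible reduced plane conic and is therefore smooth. Otherwise the defining form $q$ becomes a square in $\bar K[x,y,z]$; by diagonalization this is incompatible with $q$ being irreducible in $\chara K\neq 2$, so $\chara K=2$. In characteristic $2$, $L^2$ contains only pure-square monomials, forcing $q=ax^2+by^2+cz^2$; the hypothesis $H^0(\mathcal O_X)=K$ (computed via the algebraic closure of $K$ in $K(C)$) rules out any coefficient being zero, and irreducibility forbids $a,b,c$ from lying in a single $K^2$-coset. After permuting coordinates and rescaling, $q$ takes the form $sx^2+ty^2+z^2$ with $s,t\in K\setminus K^2$ (and implicitly $s/t\notin K^2$, equivalently $[K(\sqrt s,\sqrt t):K]=4$, as needed for $H^0(\mathcal O_X)=K$).

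\medskip

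For (4), I take $P=X\cap V(z)$, a single closed point with residue field $k(P)=K(\sqrt{s/t})$ of degree $2$. Setting $\theta=\sqrt{s/t}\in k(P)$ and $u=\theta x+y$, the equation over $k(P)$ becomes $tu^2+z^2=0$, independent of $y$. Since $k(P)(\sqrt t)=K(\sqrt s,\sqrt t)$ has degree $4$ over $K$, we have $\sqrt t\notin k(P)$, so $tu^2+z^2$ is irreducible and $X\times_K k(P)$ is reduced; yet $z/u=\sqrt t$ lies in the fraction field and is integral over the coordinate ring but not in it, witnessing non-normality. Explicit normalization adjoins $\sqrt t$, yielding $X'\cong \mathbb{P}^1_{K'}$ with $K'=k(P)(\sqrt t)=K(\sqrt s,\sqrt t)$ of degree $4$ over $K$. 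Over $K'$, the original equation factors as $(\sqrt s\, x+\sqrt t\, y+z)^2$, so $X\times_K \mathrm{Spec}\,K'$ is a non-reduced doubled line whose reduction $V(\sqrt s\, x+\sqrt t\, y+z)\cong \mathbb{P}^1_{K'}$ is already normal and isomorphic to $X'$. The main delicate step throughout is the normalization calculation in (4), which requires tracking the purely inseparable tower $K\subset k(P)\subset K'$; the rest reduces to classical curve theory and conic classification.
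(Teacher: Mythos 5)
Your parts (1)--(3) are correct and essentially reprove the statements that the paper simply cites from Tanaka \cite[Theorem 9.10]{Tan21}; the arguments (Riemann--Roch plus Serre duality for the degree, the base-point-free anticanonical system mapping birationally onto a conic, and the diagonalization/constant-field analysis in characteristic $2$) are the standard ones, and your observation that normality additionally forces $[K(\sqrt s,\sqrt t):K]=4$ (i.e.\ $s/t\notin K^2$) is a worthwhile precision that the statement leaves implicit.

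Part (4), however, has a genuine gap: the theorem quantifies over \emph{every} closed point $P$ with $[k(P):K]=2$, but you prove it only for the single point $P=X\cap V(z)$, whose residue field is $K(\sqrt{s/t})$. The residue fields of degree-$2$ points genuinely vary: the point $[1/\sqrt s:0:1]$ has residue field $K(\sqrt s)$, the point $[0:1/\sqrt t:1]$ has residue field $K(\sqrt t)$, and more generally $[a:\sqrt{(sa^2+1)/t}:1]$ for $a\in K$ produces infinitely many distinct quadratic subfields of $K(\sqrt s,\sqrt t)$. Your explicit completion of the square is tied to the coordinate $z$ and does not cover these; nor do you rule out a priori that $k(P)/K$ might be separable (it cannot be, but this needs an argument, e.g.\ via part (2) applied over $k(P)$). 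The generality matters downstream: in Section 5 the point $P$ is $T\cap X_\eta$ for the horizontal divisor $T$ produced by Proposition \ref{prop: twisted section}, over which one has no coordinate control. The paper's own proof of (4) is coordinate-free and avoids this entirely: reducedness of $X_{k(P)}$ follows from the existence of a $k(P)$-rational point over $P$ together with $H^0=k(P)$ (Schr\"oer's lemma), non-normality follows because a normal curve with a rational point would be $\Pp^1$ and hence geometrically reduced, and then writing $K_{X'}\sim\nu^*K_{X_{k(P)}}-(P'+C')$ and comparing $\deg_{K'}K_{X'}=-2$ with $\deg_{K'}\nu^*K_{X_{k(P)}}<0$ forces $C'=0$, $\deg_{K'}P'=1$, $X'\cong\Pp^1_{K'}$ and $[K':K]=4$. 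You should either adopt such an intrinsic argument or show that an arbitrary degree-$2$ point can be reduced to your normal form (writing $k(P)=K(\sqrt u)$ with $\sqrt u\in K(\sqrt s,\sqrt t)$ and completing the square in the corresponding direction), which is possible but is not done in your proposal.
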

\begin{proof}
Refer to \cite[Theorem 9.10]{Tan21} for the statements (1,2,3). 

(4) Note that $H^0(X\times_K k(P), \Oo_{X\times_K k(P)})= k(P)$, and the curve $X\times_K k(P)$ has a $k(P)$ rational point $Q$, which is over $P$. We conclude that $X\times_Kk(P)$ is reduced by \cite[Lemma 1.3]{Sch10}, but is  not normal at $Q$ because otherwise $X\times_Kk(P)$ will be  geometrically reduced, see \cite[Corollary 2.14]{Tan21}. Let $\nu: X'\to X\times_K k(P)$ be the normalization morphism and $K'=H^0(X',\mathcal{O}_{X'})$. We have the following commutative diagram
\[
\begin{tikzcd}
X' \arrow{d} \arrow{r}{\nu}\arrow[bend right=-30]{rr}{\pi}& X\times_{K} k(P) \arrow{d}\arrow{r}
& X\arrow{d}\\
{\rm Spec} \; K' \arrow{r}& {\rm Spec}\; k(P) \arrow{r}& {\rm Spec} \; K
\end{tikzcd}
\] 
 Let $P'\in X'$ be the point over $Q$, which lies in the support of the conductor. We may write that 
 \[
 K_{X'}\sim \nu^*K_{X\times_Kk(P)} - (P'+ C'),
 \]
 where $C'\geq 0$. Combining $\deg_{K'}K_{X'}=-2$ and $\deg_{K'} \nu^*K_{X\times_Kk(P)} <0$, we conclude that 
 $$-\deg_{K'}\nu^*K_{X\times_Kk(P)} = \deg_{K'}P' = 1.$$
 Hence $C'=0$ and $P'$ is a $K'$ rational point, as a result $X'\cong \Pp^1_{K'}$. Denote by $\pi: X'\to X$ the natural morphism, which has $\deg \pi = 2$. Then by 
$$-1= \deg_{K'}(\nu^*K_{X\times_Kk(P)} =\pi^*K_X)=\frac{\deg_{K} (\pi^*K_X)}{[K':K]} = \frac{-4}{[K':K]} ,$$
it follows that $[K':K]=4$. 

\medskip

Finally by the universal property of the base change $X\times_{\mathrm{Spec} K} \mathrm{Spec} K'$, we have a natural morphism $X'\rightarrow (X\times_{\mathrm{Spec} K} \mathrm{Spec} K')_{\rm red}^\nu$. Since $\pi: X'\to X$ is a finite purely inseparable morphism of degree two, we see that $X\times_{\mathrm{Spec} K} \mathrm{Spec} K'$ is not reduced, and thus $X'\rightarrow (X\times_{\mathrm{Spec} K} \mathrm{Spec} K')_{\rm red}^\nu$ is finite and of degree one, hence it is an isomorphism (see also \cite[Corollary 2.4]{Tan21}).
\end{proof}

\subsection{Trace maps of iterations of absolute Frobenius morphisms}

Let $X$ be a variety over $k$ and $F_X^e: X^{1/p^{e}} \to X$ be the absolute Frobenius morphism of $X$ iterated $e$-times (we also use $X^{1/p} \to X$ to denote the absolute Frobenius morphism). Let $(X, \De)$ be a log pair (i.e., $X$ is a normal variety and $\De \geq 0$ is a $\Qq$-Cartier divisor). Then $(X, \De)$ is called \emph{strongly F-regular} if for every effective Weil divisor $D$ on $X$, there exists an $e\in \Zz_{>0}$ such that the natural morphism
\[
\Oo_X \xrightarrow{F_X^{e \sharp}} F_{X*}^e \Oo_X \hookrightarrow F_{X*}^e\Oo_X(\lceil (p^e-1) \De +D \rceil)
\] splits locally as an $\Oo_X$-module homomorphism. 

A $\Qq$-Weil divisor $D$ is called a \emph{$\Zz_{(p)}$-Cartier divisor} if there exists some $b \in \Zz_{>0}$ not divided by $\chara k=p$ such that $bD$ is Cartier.

Let $X$ be a normal projective variety over $k$ and $E$ be an effective Cartier divisor. Then for every positive integer $e$, there exists a trace map (For example, see \cite[Proposition 2.1]{Tan15})
\[
{\rm Tr}^{e}_{X,E}: F^e_{X*}\omega_X(E) \to \omega_X(E). 
\] 
If $D$ be a Cartier divisor on $X$, then define ${\rm Tr}^e_{X,E}(D)$ to be the map that is tensoring ${\rm Tr}^e_{X,E}$ with $\Oo_X(D)$. When $E=0$, we write ${\rm Tr}^{e}_{X}, {\rm Tr}^{e}_{X}(D)$ instead of ${\rm Tr}^{e}_{X,0}, {\rm Tr}^{e}_{X,0}(D)$ for simplicity. Taking the global sections, ${\rm Tr}^e_X(-K_X+D)$ induces
    \[
    \Phi_e: H^0(X, F_{X*}^e\omega_X\otimes \Oo_X(-K_X+D)) \to H^0(X, \Oo_X(D)).
    \] Set
    \begin{equation}\label{eq: S0}
    S^0(X,D) \coloneqq \bigcap_{e \geq 0} {\rm Im}(\Phi_e).
    \end{equation}     
    Note that \[
  F_{X*}^e\omega_X\otimes \Oo_X(-K_X+D)=F_{X*}^e\Oo_X((1-p^e)K_X+p^eD).
    \]
    Moreover, if $e'>e$, then ${\rm Im}(\Phi_{e'}) \subset {\rm Im}(\Phi_e)$ as 
    \[
    {\rm Tr}^{e'}_X(-K_X+D) =  {\rm Tr}^{e}_X(-K_X+D) \circ \left( F_{X*}^{e}{\rm Tr}^{e'-e}_X(-p^e K_X+p^eD) \right). 
    \]

The following result should be well-known to the experts. We provide a proof due to the lack of a suitable reference.

\begin{proposition}\label{prop: nonzero V0}
    Let $C$ be a normal projective integral curve over a field $K$ with $\chara K =p>0$ ($K$ is not necessarily algebraically closed). Suppose that $H^0(C,\Oo_C) = K$ and $p_a(C)=0$, then $S^0(C, -K_C) \neq \emptyset$. Moreover, if $C\simeq\Pp_K^1$, then $S^0(C, D) \neq \emptyset$ for any Cartier divisor $D$ with $\deg D>0$.
\end{proposition}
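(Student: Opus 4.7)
The strategy is to show the stronger statement that $\Phi_e$ is surjective for every $e\ge 0$, which then yields $S^0(C,D)=H^0(C,\Oo_C(D))$. The nonvanishing of the latter follows in both cases from Riemann--Roch on $C$ (a curve of arithmetic genus zero with $\deg_K K_C=-2$ by Theorem~\ref{thm: Tanaka's classfy P1}(1)): for $D=-K_C$ one gets $h^0=3$, and for $C\cong\Pp_K^1$ with $\deg D>0$ one gets $h^0=\deg D+1$.

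Since $C$ is a normal (hence regular) curve, Kunz's theorem makes $F^e_{C*}\Oo_C$ locally free and the sheaf-level trace $F^e_{C*}\omega_C\to\omega_C$ locally split. Twisting by $\Oo_C(-K_C+D)$ produces a short exact sequence
\[
0\longrightarrow \mathcal{K}^{(e)}\longrightarrow F^e_{C*}\Oo_C((1-p^e)K_C+p^eD)\longrightarrow \Oo_C(D)\longrightarrow 0.
\]
The middle term has degree $p^e(\deg D+2)-2>-2=2p_a-2$, so its $H^1$ vanishes by Serre duality, and the long exact sequence reduces the surjectivity of $\Phi_e$ to the single vanishing $H^1(C,\mathcal{K}^{(e)})=0$. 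A direct Riemann--Roch calculation shows $\mathcal{K}^{(e)}$ has rank $p^e-1$ and slope $\deg D+1>0$.

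For $C\cong\Pp_K^1$ (which handles the ``moreover'' clause), I would use the classical splitting $F^e_{*}\Oo_{\Pp^1}(m)\cong\bigoplus_{i=0}^{p^e-1}\Oo_{\Pp^1}(\lfloor(m-i)/p^e\rfloor)$ to identify $\mathcal{K}^{(e)}$ with a direct sum of copies of $\Oo_{\Pp^1}(\deg D+1)$, so that $H^1(\mathcal{K}^{(e)})=0$ is immediate. For the general case of the first statement, Theorem~\ref{thm: Tanaka's classfy P1}(2)--(3) exhibits $C$ as a conic in $\Pp_K^2$, and the remaining subcase is the geometrically non-reduced conic in characteristic two. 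There I would apply Theorem~\ref{thm: Tanaka's classfy P1}(4) to produce a degree-two purely inseparable morphism $\pi\colon \Pp^1_{K'}\to C$ and use the canonical bundle formula $\pi^*K_C\sim K_{\Pp^1_{K'}}-\det\Ff_{\Pp^1_{K'}/C}$ from Proposition~\ref{prop:can-foliation} together with functoriality of the trace to transfer the $\Pp^1_{K'}$ result onto $C$.

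The main obstacle is precisely this last transfer: in the geometrically non-reduced case, one must check that the Frobenius trace on $C$ is compatible with the purely inseparable trace of $\pi$, equivalently that every Harder--Narasimhan quotient of $\mathcal{K}^{(e)}$ has slope $>-2$. If the descent through $\pi$ turns out to be delicate, a cleaner alternative is to prove directly that $(C,0)$ is globally $F$-regular (using regularity of $C$ and ampleness of $-K_C$), from which $S^0(C,-K_C)=H^0(C,\Oo_C(-K_C))$ follows from standard $F$-regularity theory.
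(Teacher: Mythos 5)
Your reduction of the surjectivity of $\Phi_e$ to $H^1(C,\mathcal{K}^{(e)})=0$ is sound (the vanishing of $H^1$ of the middle term is fine because a finite pushforward does not change cohomology), and your treatment of the case $C\simeq \Pp^1_K$ is essentially the paper's (which invokes Frobenius splitting of $\Pp^1_K$). The genuine gap is the remaining case, the geometrically non-reduced conic in characteristic $2$ --- which is precisely the case the paper needs for its inseparable Albanese morphisms. There, none of your three routes is justified. First, ``rank $p^e-1$ and slope $\deg D+1>0$'' proves nothing: the rank of $F^e_{C*}\Oo_C$ is $[K(C):K(C)^{p^e}]$, not $p^e$, when $K$ is imperfect, and positive slope does not give $H^1=0$; you would need every Harder--Narasimhan quotient of $\mathcal{K}^{(e)}$ to have degree $>-2$, and on this conic (which is not $\Pp^1$, so bundles need not split into line bundles) that is exactly the hard content, not a routine check. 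Second, the transfer through the purely inseparable degree-two cover $\pi\colon \Pp^1_{K'}\to C$ does not follow from ``functoriality of the trace'': $F^e_C$ factors through $\pi$, so $\mathrm{Tr}_{F^e_C}$ is a composite involving $\mathrm{Tr}_\pi\colon \pi_*\omega_{\Pp^1_{K'}}\to\omega_C$, and surjectivity of the composite on global sections requires analyzing $\mathrm{Tr}_\pi$ and its twists --- which is the same difficulty in different clothing. Third, ``$C$ regular with $-K_C$ ample implies $(C,0)$ globally $F$-regular'' is not a standard fact and is false as a general principle over imperfect fields (regular but geometrically non-reduced or non-normal Fano-type schemes need not be $F$-split); establishing global $F$-regularity of this particular conic would itself demand the explicit computation you are trying to avoid.

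The paper sidesteps all of this by proving only what the statement asserts in the first case, namely $S^0(C,-K_C)\neq 0$ rather than $S^0(C,-K_C)=H^0(C,\Oo_C(-K_C))$: it embeds $C$ as a conic $\{G=0\}\subseteq\Pp^2_K$, uses the compatibility of the trace maps of $\Pp^2$ and of $C$ through adjunction (with the relevant $H^1$ on $\Pp^2$ vanishing), and then exhibits a single explicit $2$-form $\gamma=\frac{x^{p^e-1}y^{p^e-1}}{G(x,y,1)}\,dx\wedge dy$ which is a global section for every $e$ and whose trace is the fixed nonzero element $\frac{1}{G}\,dx\wedge dy$; since the images $\mathrm{Im}(\Phi_e)$ are nested, this one element already gives $S^0\neq 0$. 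If you want to keep your strategy, you must either carry out such an explicit trace computation to control $\mathcal{K}^{(e)}$ (or the image of $\Phi_e$) on the non-reduced conic, or weaken your goal to the nonvanishing actually claimed.
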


\begin{proof}
By Tanaka's classification of curves with $p_a(C)=0$ (Theorem \ref{thm: Tanaka's classfy P1}), $C$ is isomorphic to a curve in $\Pp^2_K$ defined by a quadric $G(X,Y,Z)=0$. Then we have the following commutative diagram  (for example, see \cite[Lemma 2.6]{Tan15}, proof of \cite[Theorem 4.14]{Pat16})
\[
\begin{tikzcd}
0\arrow{r}
& F_*^e\omega_{\Pp^2} \arrow{r}\arrow{d}
&F_*^e(\omega_{\Pp^2}(C)) \arrow{r}{\alpha}\arrow{d}{{\rm Tr}^e_{\Pp^2,C}}
&F_*^e\omega_{C} \arrow{r}\arrow{d}{{\rm Tr}^e_C}
& 0\\
0\arrow{r}
&\omega_{\Pp^2} \arrow{r}
&\omega_{\Pp^2}(C) \arrow{r}{\beta}
&\omega_{C} \arrow{r}
& 0.
\end{tikzcd}
\]
Tensoring with $\Oo_{\Pp^2}(-2(K_{\Pp^2}+C))$ and taking global sections, by the adjunction formula we have

\begin{tikzcd}
0 \arrow[d]                                                                                                                                                    &  & 0 \arrow[d]                                                                            \\
{H^0(\mathbb{P}^2,F^e_*\omega_{\Pp^2}\otimes \mathcal{O}_{\mathbb{P}^2}(-2K_{\mathbb{P}^2}-2C))} \arrow[rr] \arrow[d]                                          &  & {H^0(\mathbb{P}^2,\omega_{\mathbb{P}^2}(-2K_{\mathbb{P}^2}-2C))} =0\arrow[d]           \\
{H^0(\Pp^2,F_*^e\omega_{\Pp^2}(C)\otimes \Oo_{\Pp^2}(-2K_{\Pp^2}-2C))} \arrow[rr, "{{\rm Tr}^e_{\Pp^2, C}(-2K_{\Pp^2}-2C)}"] \arrow[d, "H^0(\alpha\otimes 1)"] &  & {H^0(\Pp^2,\omega_{\Pp^2}(C)\otimes(-2K_{\Pp^2}-2C))} \arrow[d, "H^0(\beta\otimes 1)"] \\
{H^0(C,F_*^e\omega_{C}\otimes\Oo_C(-2K_C))} \arrow[rr, "{\rm Tr}^e_C(-2K_C)"] \arrow[d]                                                                        &  & {H^0(C,\Oo_C(-K_C))} \arrow[d]                                                         \\
{H^1(\mathbb{P}^2,F^e_*\omega_{\Pp^2}\otimes \mathcal{O}_{\mathbb{P}^2}(-2K_{\mathbb{P}^2}-2C))}                                                             &  & {H^1(\mathbb{P}^2,\omega_{\mathbb{P}^2}(-2K_{\mathbb{P}^2}-2C))=0.}                    
\end{tikzcd}\\
The two vanishings in the above diagram hold because $\omega_{\mathbb{P}^2}(-2K_{\mathbb{P}^2}-2C) \sim \Oo_{\Pp^2}(-1)$, and thus $H^0(\beta\otimes 1)$ induces $H^0(\Pp^2,\omega_{\Pp^2}(C)\otimes(-2K_{\Pp^2}-2C)) \cong H^0(C,\Oo_C(-K_C))$.


To see $S^0(X,-K_C) \neq \emptyset$, it suffices to show that ${\rm Tr}^e_{\Pp^2,C}(-2K_{\Pp^2}-2C)$ is nonzero for all $e\geq 1$. Let $[X:Y:Z]$ be a homogeneous coordinate of $\Pp^2$. Set $H = \{Z=0\}$. As $-2K_{\Pp^2}-2C\simeq 2H$, it suffices to show 
\[
{\rm Tr}^e_{\Pp^2,C}(2H):H^0(\Pp^2,\omega_{\Pp^2}(C+2p^eH))\rightarrow H^0(\Pp^2,\omega_{\Pp^2}(C+2H))
\] is nonzero (see \cite[Lemma 2.4]{Tan15}).

Let $U_{2}\coloneqq \{[x:y:1] \in \Pp^2\}$ be an open set. Consider
\[
\gamma \coloneqq \frac{x^{p^e-1}y^{p^e-1}}{G(x,y,1)}~dx \wedge dy \in H^0(U_2, \omega_{\Pp^2}(C+2p^eH)).
\] Then $\gamma$ extends to a section of $H^0(\Pp^2, \omega_{\Pp^2}(C+2p^eH))$. In fact, on the open set $U_{0}\coloneqq \{[1:y:z] \in \Pp^2\}$, we have 
\[
[1/z:y/z:1] \in U_0 \cap U_2.
\] Hence,
\[
\gamma=\frac{(1/z)^{p^e-1}(y/z)^{p^e-1}}{G(1/z,y/z,1)} ~ d(\frac 1 z) \wedge d(\frac y z)= \frac{y^{p^e-1} z^{1-2p^e}}{G(1,y,z)}~dz \wedge dy
\] extends to a section of $H^0(U_0, \omega_{\Pp^2}(C+2p^eH))$. A similar calculation shows that $\gamma$ also extends to a section of $H^0(U_1, \omega_{\Pp^2}(C+2p^eH))$ with $U_1 = \{[x:1:z]\in \Pp^2\}$. Now, by the explicit description of the trace map on the smooth affine set (for example, see \cite[Remark 2.2 and Remark 2.3]{Tan15} or \cite[Lemma 1.3.6]{BK07}), we have
\[
{\rm Tr}^e_{\Pp^2,C}(2H): 
\frac{x^{p^e-1}y^{p^e-1}}{G(x,y,1)}~dx\wedge dy 
\mapsto \frac{1}{G(x,y,1)}~dx\wedge dy. 
\] In particular, ${\rm Tr}^e_{\Pp^2,C}(2H)$ is nonzero, and thus shows the first claim.

\medskip

When $C\simeq \Pp^1_K$, then as it is Frobenius split by \cite[Example 2.2]{SZ14}, there exists an $\Oo_C$-module homomorphism $\phi$ such that composition morphism
\[
\Oo_C \xrightarrow{F_C^{e \sharp}} F_{C*}^e \Oo_C 
\xrightarrow \phi \Oo_C\] is the identity.  Tensoring with $\Oo_C(-D)$ and applying the functor $\Hom(-,\omega_C)$, we get 
\[
{\rm id}:\omega_C(D)\rightarrow F_{C*}^e \omega_C\otimes\Oo_C(D)\xrightarrow {{\rm Tr}_C^e(D)} \omega_C(D).
\]
Tensoring with $\omega_C^{-1}$ and taking global sections, we get
\[
{\rm id}:H^0(C,\Oo_C(D))\rightarrow H^0(C,F_{C*}^e\omega_C\otimes\Oo_C(-K_C+D))\xrightarrow {\Phi_e} H^0(C,\Oo_C(D)).
\]
Therefore, $S^0(C,D) = \bigcap_{e \geq 0} {\rm Im}(\Phi_e)=H^0(C,\Oo_C(D))\neq \emptyset$.
\end{proof}

\section{Some related results and immediate consequences} \label{sec: related results}
In this section, we recall some known results on varieties with nef anticanonical divisors in characteristic $p$, and derive several immediate consequences for further study.

\subsection{Characterization of abelian varieties} 

\begin{theorem}[{\cite[Theorem 0.2]{HPZ19} and \cite[Proposition 2.9]{CWZ23}}]\label{thm: cover of abelian variety}
    Let $X$ be a normal $\Qq$-Gorenstein (i.e., $K_X$ being $\Qq$-Cartier) projective variety with a generically finite morphism $f: X \to A$ to an abelian variety. Assume that $X$ admits a resolution of singularities $\rho: Y \to X$. Then
\begin{enumerate}
    \item $\kappa(X, K_X ) \geq 0$, and the equality is attained if and only if $X$ is birational to an abelian variety;
    \item if moreover $K_X \equiv 0$, then $X$ is isomorphic to an abelian variety.
\end{enumerate} 
\end{theorem}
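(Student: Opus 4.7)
The plan is to first produce a non-zero section of $\omega_X$ for the non-negativity in~(1), then invoke a positive-characteristic Kawamata-type theorem to handle the equality case, and finally deduce~(2) via a discrepancy argument applied to the Albanese morphism.

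For $\kappa(X,K_X)\ge 0$, I would exploit the triviality of $\omega_A\simeq\Oo_A$. Taking the top wedge of the differential $f^*\Omega_A^1\to\Omega_{X_{\mathrm{sm}}}^1$ on the smooth locus of $X$ and extending by reflexivity yields a map of reflexive sheaves $f^*\omega_A\to\omega_X$, which is non-zero at the generic point because $f$ is generically finite. Since $f^*\omega_A\simeq\Oo_X$, this produces a non-zero global section of $\omega_X$, equivalently an effective Weil divisor $D\sim K_X$, so $\kappa(X,K_X)\ge 0$.

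For the equivalence in~(1), the ``if'' direction follows by passing to the resolution $Y$: $Y$ is birational to an abelian variety, so $\kappa(Y,K_Y)=0$ by birational invariance of plurigenera among smooth varieties, and a standard comparison descends this to $\kappa(X,K_X)=0$. For the ``only if'' direction, $f$ factors through the Albanese morphism $a_X:X\to\Alb(X)$, which is therefore generically finite, so $X$ is of maximal Albanese dimension. The positive-characteristic Kawamata-type theorem of \cite{HPZ19} then forces $a_X$ to be birational when $\kappa(X,K_X)=0$, i.e., $X$ is birational to $\Alb(X)$.

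For~(2), $K_X\equiv 0$ gives $\kappa(X,K_X)\le 0$, so~(1) yields $\kappa(X,K_X)=0$ and $X$ is birational to $A':=\Alb(X)$; hence $a_X:X\to A'$ is a birational morphism. Writing $K_X=a_X^*K_{A'}+D$ with $D=\sum_i a(E_i,A')E_i$ supported on the $a_X$-exceptional prime divisors, the smoothness of $A'$ forces each discrepancy $a(E_i,A')\ge 1$, so $D$ is effective. Since $K_{A'}=0$, we have $D\sim_\Qq K_X\equiv 0$, and the effective numerically trivial $\Qq$-divisor $D$ must vanish (intersect with an ample class on $X$). Thus $a_X$ has no exceptional divisor; being a small proper birational morphism to the smooth (hence locally factorial) variety $A'$, it must be an isomorphism.

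The principal obstacle is the equality case of~(1): over $\Cc$ this is Kawamata's Hodge-theoretic theorem, whereas in positive characteristic the argument requires the substantial Frobenius-theoretic input of \cite{HPZ19}; a secondary subtlety is the ``standard comparison'' needed in the ``if'' direction, which is straightforward when $X$ is canonical but takes a little care in the general $\Qq$-Gorenstein setting.
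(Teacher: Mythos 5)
The paper does not actually prove this statement: it is quoted verbatim from \cite[Theorem 0.2]{HPZ19} and \cite[Proposition 2.9]{CWZ23}, so your attempt has to stand on its own. The central gap is in your very first step. The map $\bigwedge^{\dim A}(df)\colon f^*\omega_A\to\omega_X$ is non-zero at the generic point only when $f$ is \emph{separable}; in characteristic $p$ a generically finite morphism to an abelian variety may well be inseparable (e.g.\ a purely inseparable isogeny $T\to A$, which is exactly the situation this paper needs the theorem for in Proposition \ref{prop: twisted section} and Section \ref{sec: geo non-reduced fibers}), and then $df$ is generically degenerate and its top wedge is identically zero, producing no section of $\omega_X$. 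The inequality $\kappa(X,K_X)\ge 0$ is itself part of the hard content of \cite{HPZ19} and requires Frobenius-theoretic input (trace maps/$S^0$-type arguments); it cannot be obtained from the naive Jacobian argument. Since you also delegate the equality case to \cite{HPZ19}, what remains of your proof of (1) is essentially only the ``if'' direction, and there the ``standard comparison'' is also not valid as stated: for a resolution $\rho\colon Y\to X$ of a non-canonical $\Qq$-Gorenstein variety one only gets $\kappa(Y,K_Y)\le\kappa(X,K_X)$, and the reverse inequality you need can fail (one can build normal $\Qq$-Gorenstein surfaces birational to an abelian surface with $\kappa(X,K_X)=1$). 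The hypothesis that rescues the ``if'' direction is the existence of the \emph{morphism} $f$: it forces $a_X\colon X\to\Alb(X)$ to be a birational morphism onto an abelian variety, after which $K_X=a_X^*K_{\Alb(X)}+D$ with $D\ge 0$ exceptional and $\kappa(X,K_X)=\kappa(\Alb(X),a_{X*}D)=0$. Your argument as written never uses $f$ here.

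Your part (2) is essentially the right discrepancy argument (effective exceptional $D\equiv 0$ forces $D=0$, and a small projective birational morphism to a smooth, hence locally factorial, variety is an isomorphism), but it silently assumes that the birational map $X\dashrightarrow\Alb(X)$ is a morphism. For a normal but singular $X$ a birational map to an abelian variety need not extend to a morphism; here it does, but only because $f$ is a morphism and factors through $a_X$ by the universal property of the Albanese, so this should be said explicitly. In summary: the skeleton of (2) and of the ``only if'' reduction to \cite{HPZ19} is fine, but the non-negativity step and the ``if'' direction have genuine gaps in positive characteristic, precisely in the inseparable regime that this paper is concerned with.
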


\subsection{The Albanese morphism of varieties with nef anticanonical divisors}

\begin{theorem}[{\cite[Theorem 1.5, Theorem 8.1]{CWZ23}}]\label{thm: alb is a fibration}
    Let $(X, \De)$ be a projective normal $\Qq$-factorial klt pair with  $-(K_X + \De)$ being nef. Assume the resolution of singularities holds in dimension $< \dim(X)$. If the Albanese morphism $a_X: X \to A$ is of relative dimension one over the image $a_X(X)$. Then $a_X: X \to A$ is a fibration.
\end{theorem}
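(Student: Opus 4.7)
The plan is to analyze the Stein factorization of $a_X$ and show that its finite part is an isomorphism. Write $a_X = g \circ f$ where $f\colon X \to Z$ is a projective morphism with $f_* \Oo_X = \Oo_Z$ (so $Z$ is normal) and $g\colon Z \to A$ is finite, factoring through $a_X(X) \subseteq A$. The relative dimension one hypothesis gives $\dim Z = \dim a_X(X) = \dim X - 1$, so the generic fiber $X_\eta$ of $f$ is a projective integral curve over $k(Z)$. The goal is to show that $g$ is an isomorphism; in particular that $a_X(X) = A$. The core task is to prove $K_Z \equiv 0$, after which Theorem~\ref{thm: cover of abelian variety}(2) identifies $Z$ as an abelian variety (available because $\dim Z < \dim X$ falls in the range where resolution of singularities is assumed).

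I would obtain $K_Z \equiv 0$ as the combination of two opposite pseudoeffectivity statements. On the one hand, $g$ is a generically finite morphism from the normal $\mathbb Q$-Gorenstein variety $Z$ to the abelian variety $A$, so Theorem~\ref{thm: cover of abelian variety}(1) applies and yields $\kappa(Z,K_Z)\geq 0$, i.e.\ $K_Z \succcurlyeq 0$. On the other hand, applying the canonical bundle formula in positive characteristic from \cite[Theorems~1.2, 1.3]{CWZ23} to the fibration $f$ (as already invoked in the remark following Setting~$\dagger$), one has $p_a(X_\eta) = 0$ and a relation
\[
K_X + \De \sim_\Qq f^*(K_Z + B_Z + M_Z)
\]
with $B_Z \geq 0$ a discriminant divisor and $M_Z$ a pseudoeffective moduli part. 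Intersecting with a very general complete intersection curve on $Z$ pulled back to $X$, and using the nefness of $-(K_X+\De)$, one then deduces that $-(K_Z + B_Z + M_Z)$ is pseudoeffective on $Z$, whence so is $-K_Z$. Combining with the previous pseudoeffectivity of $K_Z$ gives $K_Z \equiv 0$ as required.

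With $Z$ now identified as an abelian variety by Theorem~\ref{thm: cover of abelian variety}(2), the composition $X \to Z$ is a morphism to an abelian variety, so by the universal property of the Albanese it factors uniquely as $X \xrightarrow{a_X} A \xrightarrow{h} Z$. Then $g \circ h\colon A \to A$ refactors $a_X$, so by uniqueness (after suitable normalization of base points) it must be the identity. Hence $g$ is a split isogeny, and in fact an isomorphism, which shows that $a_X = f$ is itself a fibration.

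The hardest step is securing the canonical bundle formula with a pseudoeffective moduli part $M_Z$ in positive characteristic. Classical arguments via variation of Hodge structures fail when the generic fiber is inseparable or geometrically non-reduced, and handling such wild fibrations of arithmetic genus zero (or one) is precisely the technical heart of \cite[Theorems~1.2, 1.3]{CWZ23}, where the resolution of singularities hypothesis in dimension $< \dim X$ is used in an essential way. A secondary subtlety is that $g$ may a priori be inseparable; this does not cause trouble for Theorem~\ref{thm: cover of abelian variety} once $K_Z \equiv 0$ is established, but one must take care that the pseudoeffectivity arguments above pass through the purely inseparable part of $g$ without losing positivity.
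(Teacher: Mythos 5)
The paper itself contains no proof of Theorem \ref{thm: alb is a fibration}: it is imported verbatim from \cite{CWZ23} (Theorems 1.5 and 8.1), and the only in-paper commentary is the remark that the argument relies on Theorem \ref{thm: cover of abelian variety}. Your broad strategy (Stein factorization $a_X=g\circ f$, identify the Stein base via Theorem \ref{thm: cover of abelian variety}, then conclude by the universal property of the Albanese, whose last step is fine) is therefore at least consistent with what is known about the proof. But as written there are two genuine gaps. The first is the canonical bundle formula step: your displayed relation $K_X+\Delta\sim_{\mathbb{Q}} f^*(K_Z+B_Z+M_Z)$ forces $\deg_{K(Z)}(K_X+\Delta)|_{X_\eta}=0$, whereas the hypotheses only give $\deg_{K(Z)}(K_X+\Delta)|_{X_\eta}\le 0$. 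When the generic fiber has $p_a(X_\eta)=0$ and $\Delta$ has horizontal degree $<2$ (in particular $\Delta=0$ and conic generic fiber, which is the principal case relevant to this paper), the relative degree is strictly negative and no such formula can hold; so the key deduction that $-K_Z$ (hence $K_Z\equiv 0$) is unsupported exactly in the hard, possibly wild, genus-zero case. Relatedly, your claim that the canonical bundle formula yields $p_a(X_\eta)=0$ is false in the stated generality: the remark after Setting $\dagger$ gets $p_a=0$ only because there $-K_X$ is nef and \emph{not} numerically trivial, while Theorem \ref{thm: alb is a fibration} allows $K_X+\Delta\equiv 0$, where $p_a(X_\eta)=1$ genuinely occurs (e.g. bielliptic and quasi-bielliptic surfaces, whose Albanese is a genus-one fibration over an elliptic curve). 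So a case division is required: when the relative degree is zero the cited formula applies, but when it is negative you need a different argument, and that case is precisely the technical heart you have not addressed.

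The second gap is the $\mathbb{Q}$-Gorenstein hypothesis. To apply Theorem \ref{thm: cover of abelian variety} to $Z$ you call it ``the normal $\mathbb{Q}$-Gorenstein variety $Z$,'' but the Stein base of a fibration from a klt pair need not have $\mathbb{Q}$-Cartier canonical divisor, and nothing in your argument provides this. The natural repair is to replace $Z$ by a resolution $Z'\to Z$ (this is where the assumption of resolutions in dimension $<\dim X$ actually enters) and run the positivity argument for $K_{Z'}$; but then Theorem \ref{thm: cover of abelian variety} only tells you that $Z'$ is birational to an abelian variety, and you still have to descend this statement to the normal variety $Z$ itself (so that $g\colon Z\to A$ becomes a map of abelian varieties and the universal-property argument can start), which requires an additional step, e.g. extending the induced birational map from the abelian variety to $Z$ to a morphism and comparing it with the finite map $g$. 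Until both points are filled in, the proposal does not yet constitute a proof.
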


\begin{remark}
    The assumption on the resolution of singularities is needed since the argument heavily relies on Theorem \ref{thm: cover of abelian variety}.
\end{remark}

\begin{theorem}[{\cite[Theorem 5.7]{Eji23}}]\label{thm: iso fibers}
    Let $(X, \De)$ be a strongly F-regular projective variety and let $Y$ be an abelian variety. Let $f: X \to Y$ be an algebraic fiber space. Suppose that $-K_X-\De$ is a nef $\Zz_{(p)}$-Cartier divisor, and $(X_{\bar\eta}, \De_{\bar\eta})$ is strongly F-regular. Then $X_y \simeq X_z$ for every $y,z \in Y(k)$.
\end{theorem}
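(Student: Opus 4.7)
The plan is to build a numerically flat vector bundle on $Y$ from the fibration, apply the structure theorem for numerically flat bundles on abelian varieties (Theorem~\ref{thm: num flat on abelian}), and then use strong F-regularity to upgrade the resulting trivialization to a product structure after a suitable isogeny. Concretely, choose $m$ coprime to $p$ with $L := -m(K_X + \De)$ an integral Cartier divisor and set $V_m := f_*\Oo_X(L)$. Strong F-regularity of $(X_{\bar\eta}, \De_{\bar\eta})$ together with cohomology and base change ensures that $V_m$ is a locally free sheaf on $Y$ of constant rank and compatible with base change.

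The central technical step is to show that $V_m$ is \emph{numerically flat}. The nef direction follows from positive-characteristic semipositivity of direct images of nef anti-log-canonical divisors (in the Patakfalvi--Ejiri framework): since $K_{X/Y} = K_X$ on an abelian base and $-K_X - \De$ is nef, iterated relative Frobenius pushforwards $F^e_{Y*}V_m$ combined with $S^0$-sections produce the nefness of $V_m$. For the dual direction, nefness of $V_m^{\vee}$, one globalizes the Frobenius splitting coming from strong F-regularity of the geometric generic fiber via trace maps; nefness of $-K_X - \De$ then ensures these trace maps yield compatible surjections that rule out any negative quotient of $V_m$.

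Given numerical flatness, Theorem~\ref{thm: num flat on abelian} yields $V_m \cong \bigoplus_i U_i \otimes \Ll_i$ with $U_i$ unipotent and $\Ll_i$ numerically trivial. A multiplication-by-$N$ isogeny $n_N : Y' \to Y$ with $N$ sufficiently divisible and coprime to $p$ trivializes both factors, so $n_N^* V_m \cong \Oo_{Y'}^{\oplus r}$. Running this simultaneously for a cofinal sequence of $m$, and invoking strong F-regularity to guarantee finite generation and surjectivity of the multiplication maps in the relative section ring $\bigoplus_m V_m$, one deduces that $X \times_Y Y' \cong Y' \times F$ for some projective variety $F$ by taking the relative $\mathrm{Proj}$. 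Since $n_N$ is surjective on $k$-points and the base-changed family is a product, every closed fiber of $f$ is isomorphic to $F$, which gives the theorem.

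The main obstacle is the numerical flatness step: nefness in one direction is a substantial positive-characteristic semipositivity statement, and nefness of the dual requires delicately combining global strong F-regularity with that of the geometric generic fiber to extract a compatible Frobenius splitting bounding $V_m$ from above. A secondary, though less severe, obstacle is promoting the sheaf-level trivialization to an algebra-level trivialization in the last step, which again leans on strong F-regularity to control the relative section ring and the $\mathrm{Proj}$ construction.
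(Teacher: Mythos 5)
First, note that the paper does not prove this statement at all: it is quoted verbatim from Ejiri \cite[Theorem 5.7]{Eji23} and used as a black box, so there is no in-paper argument for your sketch to match. Judged on its own, your proposal has gaps that are fatal to the strategy, not just to the details. The central one is the reconstruction step: you recover $X\times_Y Y'$ as the relative $\mathrm{Proj}$ of $\bigoplus_m f_*\mathcal{O}_X\bigl(-m(K_X+\Delta)\bigr)$, but this only returns $X$ when $-(K_X+\Delta)$ is $f$-ample, which is nowhere assumed --- it is merely nef. The theorem explicitly covers the case where $K_X+\Delta$ is relatively numerically trivial (e.g.\ $X=Y\times E$ with $E$ a smooth curve of genus one, $\Delta=0$): there every $V_m$ has rank at most one, the section algebra is essentially $\mathcal{O}_Y$, and its $\mathrm{Proj}$ is $Y$ itself, so your construction sees nothing of the fibers. (This is precisely why the present paper, in its separable case, works instead with an effective divisor $T$ produced by Theorem \ref{thm: cwz24} that \emph{is} relatively ample, rather than with $-K_X$ directly.) A second outright error is the claim that a multiplication-by-$N$ isogeny trivializes the pieces $U_i\otimes\mathcal{L}_i$ of Theorem \ref{thm: num flat on abelian}: for $\mathcal{L}\in\mathrm{Pic}^0(Y)$ one has $n_N^*\mathcal{L}\cong\mathcal{L}^{\otimes N}$, so non-torsion numerically trivial line bundles are never killed by an isogeny, and a nonsplit unipotent extension has class in $H^1(Y,\mathcal{O}_Y)$ on which $n_N^*$ acts by $N$, hence also survives when $p\nmid N$. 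So even granting numerical flatness, the asserted trivialization $n_N^*V_m\cong\mathcal{O}_{Y'}^{\oplus r}$ is false in general.

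Beyond these, the two technical pillars are asserted rather than argued: local freeness and base-change compatibility of $V_m$ do not follow from strong $F$-regularity of the geometric generic fiber (the dimensions $h^0\bigl(X_y,-m(K_X+\Delta)|_{X_y}\bigr)$ can jump on special fibers), and the positive-characteristic semipositivity machinery you invoke (Patakfalvi--Ejiri) concerns $S^0f_*$ of \emph{twisted pluricanonical} sheaves; transferring it to anti-pluricanonical direct images, and especially producing nefness of the dual, is exactly the hard content of Ejiri's theorem and cannot be waved through via ``compatible trace maps.'' Finally, observe that if your sketch worked it would prove that the family becomes a product after an isogeny, which is strictly stronger than the cited statement and stronger than what the present paper itself obtains without the delicate case analysis of Sections \ref{sec: geo reduced fibers} and \ref{sec: geo non-reduced fibers}; that alone should signal that essential difficulties are being skipped. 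If you want to pursue an independent proof, you need either a genuinely relatively ample object to feed into a $\mathrm{Proj}$/isomorphism-of-fibers argument, or a mechanism (as in Ejiri's work) that compares fibers directly without trivializing the family.
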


\subsection{A semiampleness criterion}

When we deal with a variety whose Albanese morphisms are geometrically non-reduced, we shall reduce it to the following case.

\begin{proposition}[{\cite[Proposition 5.3]{CWZ23}}]\label{prop: cwz23 prop 5.2}
Assume that a variety admits a smooth resolution of singularities. Let $X$ be a $\Qq$-factorial normal projective variety, and let $\De$ be an effective $\Qq$-divisor on $X$. Let $f: X \to S$ be a fibration of relative dimension one, and $\mathfrak M$ a movable linear system on $X$ without fixed components and with $\deg_{K(S)} \mathfrak M > 0$. Assume that the following three conditions hold:
\begin{enumerate}
\item $S$ is of maximal Albanese dimension,
\item  $-(K_X + M_0 + \De)$ is nef, where $M_0 \in \mathfrak M$, and
\item either $(X_{K(S)}, \De_{K(S)})$ is klt, or if $T$ is a (the unique) horizontal irreducible component of $\De$ with coefficient one then $\deg_{K(S)} T = 1$ and the restriction $T|_{T^\nu}$ on the normalization of $T$ is pseudo-effective.
\end{enumerate}
Then
\begin{enumerate}[label=(\roman*)]
\item $S$ is an abelian variety,
\item $M_0$ is semi-ample with numerical dimension $\nu(X, M_0) = 1$, that is, $|M_0|$ defines a fibration $g: X \to \Pp^1$, and
\item for a general $t \in \Pp^1$, the fiber of g over $t$ (denoted by $G_t$) is isomorphic to an abelian variety, and  $\De|_{G_t} \equiv 0$.
\end{enumerate}
\end{proposition}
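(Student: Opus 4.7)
Restricting the nef class $-(K_X+M_0+\Delta)$ to the generic fiber $X_\eta$ of $f$ gives $\deg K_{X_\eta}\leq -\deg M_0|_{X_\eta}<0$, so $p_a(X_\eta)=0$. I apply the canonical bundle formula \cite[Theorem 1.2, 1.3]{CWZ23} to $f:X\to S$ and push the nefness of $-(K_X+M_0+\Delta)$ down through this formula; since the discriminant and moduli contributions enter with effective/pseudo-effective sign, we obtain that $-K_S$ is pseudo-effective. Assumption (1) that $S$ is of maximal Albanese dimension then forces $K_S\equiv 0$, so $a_S\colon S\to \Alb(S)$ is generically finite with $K_S\equiv 0$, and Theorem \ref{thm: cover of abelian variety}(2) identifies $S$ with an abelian variety $A$, giving (i).

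\textbf{Stage 2 (semi-ampleness of $M_0$; main obstacle).} Form $\mathcal{E}:=f_*\Oo_X(M_0)$ on $A$, a torsion-free sheaf of positive generic rank (because $\deg_{K(S)}M_0>0$). Assumption (3) is tailored so that the trace-map / $S^0$ machinery of the preliminaries (in the spirit of Proposition \ref{prop: nonzero V0}) supplies sections on the $\Pp^1$-fibers that lift to global sections of $\mathcal{E}$; combined with the nefness of $-K_X-M_0-\Delta$ and $K_A\equiv 0$, a semi-positivity argument shows that $\mathcal{E}$ is numerically flat on $A$. By Theorem \ref{thm: num flat on abelian}, after an isogeny $A'\to A$, $\mathcal{E}$ becomes an iterated extension of numerically trivial line bundles; the pencil $|M_0|$ then descends to a base-point-free pencil, yielding a fibration $g\colon X\to \Pp^1$ with $M_0\sim_\Qq g^*\Oo_{\Pp^1}(d)$ for some $d>0$, so $M_0$ is semi-ample with $\nu(X,M_0)=1$, proving (ii). This stage is the main obstacle: in positive characteristic nefness does not imply semi-ampleness, and coordinating numerical flatness on the abelian base, trace-map lifting of sections on the $\Pp^1$-fibers, and the precise technical hypothesis (3) is the crux.

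\textbf{Stage 3 (general fiber is abelian).} For a general $t\in\Pp^1$, $G_t:=g^{-1}(t)$ has dimension $\dim A$, and $f|_{G_t}\colon G_t\to A$ is generically finite since $g|_{X_s}$ is non-constant on a general $\Pp^1$-fiber $X_s$ of $f$ (it realizes the positive-degree pencil $|M_0||_{X_s}$). The semi-ampleness of $M_0$ together with the original nefness upgrades to $K_X+M_0+\Delta\equiv 0$; adjunction then gives $K_{G_t}+\Delta|_{G_t}\equiv 0$. Condition (3) forces $\Delta|_{G_t}\equiv 0$: in the klt case the horizontal boundary becomes vertical under $g$, while in the alternative case the pseudo-effectivity of $T|_{T^\nu}$ together with $T\cdot G_t=\deg_{K(S)}T=1$ forces the restriction to be numerically trivial. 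Hence $K_{G_t}\equiv 0$ with $G_t\to A$ generically finite, and Theorem \ref{thm: cover of abelian variety}(2) concludes that $G_t$ is an abelian variety, establishing (iii).
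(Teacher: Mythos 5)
This proposition is not proved in the paper at all: it is quoted verbatim from \cite[Proposition 5.3]{CWZ23}, so there is no in-paper proof to compare with, and I can only judge your sketch on its own terms. There it has genuine gaps at exactly the step you yourself call the crux. In Stage 2 nothing is actually established: numerical flatness of $f_*\mathcal{O}_X(M_0)$ would require both its nefness (a semipositivity statement which, in characteristic $p$ with possibly wild or non-$F$-regular generic fibres, is precisely the delicate point; Theorem \ref{thm: cwz24} produces a twisted section after an isogeny, not nefness of a pushforward) and nefness of its dual, and you address neither. Even granting numerical flatness, the conclusion does not follow: numerically flat bundles on an abelian variety (e.g.\ a nontrivial element of $\mathrm{Pic}^0$) typically have no sections, and in any case the two substantive claims of (ii) --- that the linear system is base-point free and that the induced map has one-dimensional image, i.e.\ $\nu(X,M_0)=1$ --- are asserted (``descends to a base-point-free pencil''), not proved. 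Hypothesis (3), whose role is to make an adjunction argument work when $\Delta$ has a horizontal component of coefficient one, is never used concretely anywhere.

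Stage 3 also rests on a false step: semi-ampleness of $M_0$ plus nefness of $-(K_X+M_0+\Delta)$ does not ``upgrade'' to $K_X+M_0+\Delta\equiv 0$. Take $X=\mathbb{P}^1\times A$ with $A$ abelian, $\Delta=0$, $f$ the second projection and $M_0$ a fibre of the first projection: then $-(K_X+M_0)$ is the pullback of $\mathcal{O}_{\mathbb{P}^1}(1)$, nef but not numerically trivial. The conclusion (iii) is still reachable without that claim, since $M_0|_{G_t}\equiv 0$ and $G_t|_{G_t}\equiv 0$ give that $-(K_{G_t}+\Delta|_{G_t})$ is the restriction of a nef class, after which the maximal-Albanese-dimension argument via Theorem \ref{thm: cover of abelian variety} forces $K_{G_t}\equiv\Delta|_{G_t}\equiv 0$; but you would still need $G_t$ normal and $\mathbb{Q}$-Gorenstein before invoking that theorem (general fibres in characteristic $p$ need not be), and ``in the klt case the horizontal boundary becomes vertical under $g$'' is not an argument. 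Stage 1 has the same looseness: you apply Theorem \ref{thm: cover of abelian variety} to $S$ without knowing $K_S$ is $\mathbb{Q}$-Cartier, and ``push the nefness through the canonical bundle formula'' needs the precise shape of that formula. A more economical route, consistent with how this paper argues elsewhere (e.g.\ Proposition \ref{prop: twisted section}), is to apply adjunction to the normalization of a general member $M\in\mathfrak M$, which dominates $S$ and hence has maximal Albanese dimension: nefness of $-(K_X+M_0+\Delta)$ then forces $K_{M^\nu}\equiv\Delta_{M^\nu}\equiv M|_{M^\nu}\equiv 0$, which simultaneously yields (i), the abelian fibres in (iii), and the numerically trivial normal bundle that feeds a semi-ampleness criterion, with no recourse to numerically flat pushforwards.
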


\subsection{The twisted sections of anticanonical divisor}

By an argument analogous to that in \cite[Section 4.2]{Zha20}, we have the following nonvanishing result (also see \cite[Theorem 3.7]{Zha20}).

\begin{theorem}[{\cite[Theorem 3.6]{CWZ24}}]\label{thm: cwz24}
Let $X$ be a normal projective variety equipped with a surjective morphism $f: X \to A$ to an abelian variety $A$ of dimension $d$. Let $\De$ be an effective divisor on $X$ and $D$ be a Cartier divisor on $X$. Assume that
\begin{enumerate}[label=(\alph*)]
    \item $K_X + \De$ is a $\Qq$-Cartier $\Qq$-divisor whose Weil index of $K_X + \De$ is indivisible by $p$; 
    \item the Cartier index of $(K_X + \De)|_{X_\eta}$ is indivisible by $p$;
    \item $D-(K_X + \De)$ is nef and relatively ample over $A$; 
    \item $r=\dim_{K(\eta)}S^0_\De(X_\eta,D|_{X_\eta})>0$.
\end{enumerate} 
Then
(i) $V^0(f_*\Oo_X(D)) \coloneqq \{\alpha \in \hat A=\Pic^0(A)\mid h^0(f_*\Oo_X(D) \otimes P_\alpha) > 0\} \neq \emptyset$, where $P_\alpha$ denotes the Poincar\'e line bundle over $A \times \hat A$; and

(ii) if $\dim V^0(f_*\Oo_X(D)) = 0$, then there exists a subsheaf $\Ff\subset f_*\Oo_X(D)$ of rank $r$ such that $\Ff|_{X_\eta} = S^0_\De(X_\eta,D|_{X_\eta})$. Moreover, there exist an isogeny of abelian varieties $\pi: A_1 \to A$ and $P_1, \ldots, P_r \in  \Pic^0(A_1)$ inducing a generically surjective homomorphism
\[
\beta: \bigoplus_i P_i \to \pi^*\Ff.
\]
\end{theorem}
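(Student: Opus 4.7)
The plan is to adapt Zhang's strategy in \cite[\S4.2]{Zha20}, combining iterated Frobenius trace maps on $X$ with generic vanishing on the abelian variety $A$. Conditions (a) and (b) on the $p$-indivisibility of the Weil/Cartier indices of $K_X+\Delta$ guarantee that for each $e \geq 0$ the Grothendieck trace
\[
\mathrm{Tr}^e\colon F^e_{X*}\mathcal{O}_X\bigl((1-p^e)(K_X+\Delta)+p^e D\bigr) \longrightarrow \mathcal{O}_X(D)
\]
is well-defined, and by construction $S^0_\Delta(X_\eta,D|_{X_\eta})$ is the stabilized image of $H^0(X_\eta,-)$ applied to these maps. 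Pushing forward by $f_*$ produces a descending chain of coherent subsheaves of $f_*\mathcal{O}_X(D)$ that stabilizes by Noetherianity to a subsheaf $\mathcal{F}$, and flat base change along $\mathrm{Spec}\,K(A)\to A$ identifies $\mathcal{F}|_{X_\eta}$ with $S^0_\Delta(X_\eta, D|_{X_\eta})$, so $\mathrm{rank}\,\mathcal{F}=r$.

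For (i), I would rewrite
\[
(1-p^e)(K_X+\Delta)+p^e D = D + (p^e-1)\bigl(D-(K_X+\Delta)\bigr),
\]
so that condition (c) makes the bundle inside the Frobenius pushforward increasingly $f$-ample. Asymptotic positivity combined with Hacon--Patakfalvi-type generic vanishing in characteristic $p$ then implies that, for $e\gg 0$, the pushforward
\[
\mathcal{G}_e \coloneqq f_*\mathcal{O}_X\bigl((1-p^e)(K_X+\Delta)+p^e D\bigr)
\]
is continuously globally generated on $A$; in particular $V^0(\mathcal{G}_e)$ is a nonempty open subset of $\hat{A}$. Since the trace map induces a surjection $\mathcal{G}_e \twoheadrightarrow \mathcal{F}$ after stabilization, we obtain $V^0(\mathcal{F})\neq\emptyset$, and hence $V^0(f_*\mathcal{O}_X(D))\supseteq V^0(\mathcal{F})\neq\emptyset$.

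For (ii), assume $\dim V^0(f_*\mathcal{O}_X(D))=0$. Then the cohomological support locus of $\mathcal{F}$ is a finite set $\{\alpha_1,\ldots,\alpha_s\}\subset\hat{A}$. Choosing an isogeny $\pi\colon A_1\to A$ whose dual kernel contains each $\alpha_i$, the cohomological support of $\pi^*\mathcal{F}$ becomes concentrated at the origin of $\hat{A}_1$. The Fourier--Mukai transform of $\pi^*\mathcal{F}$ is then supported at $\hat{0}\in\hat{A}_1$, and the inverse Fourier--Mukai transform, combined with the identification $\mathrm{rank}\,\pi^*\mathcal{F}=r$, yields the desired generically surjective homomorphism $\bigoplus_{i=1}^r P_i \to \pi^*\mathcal{F}$ with $P_i\in\mathrm{Pic}^0(A_1)$.

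The main obstacle is the generic vanishing input in positive characteristic: one must verify that the Frobenius-stabilized subsheaf $\mathcal{F}$ inherits GV-type properties from the asymptotic positivity of its trace sources, and that the structural description of GV-sheaves with zero-dimensional cohomological support locus carries through in our setting, where $\mathcal{F}$ is not a priori locally free. Both demand careful use of the Hacon--Patakfalvi framework applied to $F$-trace subsheaves, as opposed to the more familiar setting of locally free GV-sheaves in characteristic zero.
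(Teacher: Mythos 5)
Note first that the paper does not actually prove this statement: it is imported verbatim as \cite[Theorem 3.6]{CWZ24}, with only the remark that the argument is analogous to \cite[Section 4.2]{Zha20}. So your proposal can only be measured against that strategy, and its skeleton does match it: defining the traces $\mathrm{Tr}^e$ via hypotheses (a),(b), taking the descending chain of images inside $f_*\mathcal{O}_X(D)$, stabilizing by Noetherianity to $\mathcal{F}$ with $\mathcal{F}|_\eta=S^0_\Delta(X_\eta,D|_{X_\eta})$, and then invoking Fourier--Mukai theory on $\hat{A}$ together with an isogeny for (ii). That outer layer is correct, as is the observation that for $e\gg 0$ the map $\mathcal{G}_e\to\mathcal{F}$ is surjective.

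However, the two steps where the theorem actually has content would fail as you argue them. For (i), the claim that $\mathcal{G}_e=f_*\mathcal{O}_X\bigl(D+(p^e-1)(D-K_X-\Delta)\bigr)$ becomes continuously globally generated is unsupported and in fact false in the intended range of application: hypothesis (c) gives only nefness plus \emph{relative} ampleness, so no absolute positivity over $A$ accumulates as $e$ grows, and in the very situation where the present paper uses the theorem ($X=\mathbb{P}_A(\mathcal{E})$ with $\mathcal{E}$ a nontrivial unipotent extension and $D=-K_X$) the sheaves $\mathcal{G}_e$ are numerically flat with $V^0$ a single point, hence not CGG. The genuine argument does not establish positivity of the individual $\mathcal{G}_e$ at all; it runs the Hacon--Patakfalvi machinery on the inverse system of Fourier--Mukai transforms of the $\mathcal{G}_e$ (transition maps induced by the traces), uses hypothesis (d), i.e.\ $r>0$, to show the limit object is nonzero, and extracts $V^0(\mathcal{F})\neq\emptyset$ from that Cartier-module-type structure --- nonvanishing comes from the stabilized system itself, not from CGG of each term. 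For (ii) there are two gaps: finiteness of $V^0(\mathcal{F})$ does not imply that the full transform of $\pi^*\mathcal{F}$ is supported at finitely many points (that needs control of all the higher loci $V^i$, i.e.\ a GV/M-regularity statement, which is again part of what must be proved for a sheaf that is only a Frobenius-stabilized image); and your isogeny ``whose dual kernel contains each $\alpha_i$'' exists only when the finitely many points of $V^0$ are torsion, which is not given. Indeed the conclusion allows arbitrary $P_1,\dots,P_r\in\mathrm{Pic}^0(A_1)$ precisely because in the actual proof the isogeny arises from the Frobenius/trace structure (multiplication-by-$p^e$-type isogenies), not from an attempt to translate the points of $V^0$ to the origin.
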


\begin{proposition}\label{prop: twisted section}
Let $X$ be a smooth projective variety over an algebraically closed field $k$ of characteristic $p>0$. Assume that 
\begin{enumerate}
\item the resolution of singularities holds in dimension $<\dim X$,
    \item $-K_X$ is nef and relatively semiample over $A$,
    \item the Albanese morphism $a_X: X \to A$ is of relative dimension one, which is a fibration;
    \item $\deg_{K(A)} K_X <0$.
\end{enumerate}
Then 

(i) There exists an $L \in \Pic^0(A)$ such that 
    \begin{equation}\label{eq: nonvanishing twist pic0}
       -K_X+a_X^*L \sim B \geq 0. 
    \end{equation}
    
(ii) The divisor $B$ has no vertical components over $A$. Each irreducible component $T$ of $B$ is a nef and normal divisor, and the projection $T \to A$ is an isogeny of abelian varieties. 

Precisely, we have the following cases:

 {\bf Case (a).} $B=2T$. Then $T \to A$ is an isomorphism, and $T$ is nef with $T|_T\equiv 0$.

    {\bf Case (b).} $B=T+T'$ with $T \neq T'$ and $T\cap T' = \emptyset$. In this case, both $T\rightarrow A$ and $T'\rightarrow A$ are isomorphisms. Furthermore, both $T$ and $T'$ are nef, and $L$ is a torsion bundle.

    {\bf Case (c).} $B=T$. Then $T \to A$ is an isogeny of degree $2$, and T is nef. In this case, $L$ is a torsion bundle.
\end{proposition}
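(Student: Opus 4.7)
For (i), I would apply Theorem~\ref{thm: cwz24} with $f=a_X$, $\Delta=0$, and $D=-K_X$. Since $X$ is smooth, $K_X$ is Cartier and the relevant indices equal $1$, giving hypotheses (a) and (b). For (c), $D-K_X=-2K_X$ is nef as a sum of two nef divisors, and the relative semiampleness of $-K_X$ combined with its positive degree on the $1$-dimensional fibers (which follows from $\deg_{K(A)}K_X<0$) upgrades to relative ampleness. For (d), $p_a(X_\eta)=0$ by the canonical bundle formula recalled in the introduction, and Proposition~\ref{prop: nonzero V0} then gives $S^0(X_\eta,-K_{X_\eta})\neq 0$, hence $r>0$. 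Conclusion~(i) of Theorem~\ref{thm: cwz24} supplies some $L\in\Pic^0(A)$ with $h^0(X,\Oo_X(-K_X+a_X^*L))>0$; any nonzero section yields the required $B\geq 0$ in (\ref{eq: nonvanishing twist pic0}).

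For (ii), I would first decompose $B=B_h+B_v$ over $A$. Restriction to the generic fiber gives $B_h|_{X_\eta}\sim -K_{X_\eta}$ of degree $2$ by Theorem~\ref{thm: Tanaka's classfy P1}(1). The main obstacle is proving $B_v=0$: every vertical prime $V\subseteq B_v$ satisfies $a_X^*E=nV$ for a prime divisor $E\subset A$ and some $n\geq 1$, so $B_v$ is $\Qq$-numerically the pullback of an effective class on $A$. The plan is to combine the nefness of $B\equiv -K_X$ with either a deformation of $L$ inside $V^0$ or the refined subsheaf $\Ff\subseteq (a_X)_*\Oo_X(-K_X)$ from Theorem~\ref{thm: cwz24}(ii)---whose generic stalk coincides with the trace-generated subspace $S^0(X_\eta,-K_{X_\eta})$---to force $B_v$ to vanish. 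Once this is done, $B=B_h$ has generic degree $2$, and the decomposition $B=\sum m_iT_i$ with $\sum m_i\deg(T_i\to A)=2$ enumerates precisely the three combinatorial possibilities listed as Cases (a), (b), (c).

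The remaining structural claims follow from adjunction and Theorem~\ref{thm: cover of abelian variety}. The plan is to write $(K_X+T)|_{T^\nu}\sim_\Qq K_{T^\nu}+\Delta_{T^\nu}$ via Theorem~\ref{thm: adjunction}, then substitute $K_X\sim a_X^*L-B$. In Case (b), since $T\cap T'=\emptyset$ one has $T'|_{T^\nu}=0$, and in Case (c) this reduction is immediate from $B=T$; in both cases one obtains $K_{T^\nu}+\Delta_{T^\nu}\sim a_X^*L|_{T^\nu}\equiv 0$. Combined with $K_{T^\nu}\succcurlyeq 0$ from Theorem~\ref{thm: cover of abelian variety}(1), pseudoeffectivity of both sides forces $K_{T^\nu}\equiv 0$ and $\Delta_{T^\nu}=0$; by Theorem~\ref{thm: adjunction}(ii) this shows $T$ is normal (using that $T$ is Cohen--Macaulay as a Cartier divisor in the smooth $X$), and Theorem~\ref{thm: cover of abelian variety}(2) identifies $T$ with an abelian variety isogenous to $A$. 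Case (a) requires an additional step: $T\equiv_\Qq -K_X/2$ is nef so $T|_{T^\nu}$ is nef, and from $K_{T^\nu}+\Delta_{T^\nu}+T|_{T^\nu}\equiv 0$ both $K_{T^\nu}$ and $-K_{T^\nu}$ are pseudoeffective, hence $K_{T^\nu}\equiv 0$; then $\Delta_{T^\nu}+T|_{T^\nu}\equiv 0$ with both summands pseudoeffective forces $\Delta_{T^\nu}=0$ and $T|_T\equiv 0$, giving $T\cong A$. Finally, the torsion assertion on $L$ in Cases (b) and (c) follows from $a_X^*L|_T\sim K_T\equiv 0$ (using triviality of the canonical bundle on the abelian $T$), placing $L$ in the finite kernel of the map $(a_X|_T)^*:\Pic^0(A)\to\Pic^0(T)$ induced by the isogeny.
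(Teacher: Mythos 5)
Part (i) of your proposal is fine and is exactly the paper's argument (Theorem~\ref{thm: cwz24} with $\Delta=0$, $D=-K_X$, fed by Proposition~\ref{prop: nonzero V0}). The genuine gap is in (ii), at precisely the step you yourself flag as ``the main obstacle'': the vanishing of the vertical part $B^v$ is never proved. You only offer a plan (``a deformation of $L$ inside $V^0$ or the refined subsheaf $\Ff$ from Theorem~\ref{thm: cwz24}(ii) \dots to force $B_v$ to vanish'') without executing it, and it is not clear either tool does the job; yet your subsequent case analysis silently relies on $B^v=0$, since you drop the term $B^v|_{T^\nu}$ from the adjunction identity in all three cases. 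A second, related problem: in Case (b) you take $T\cap T'=\emptyset$ as given in order to conclude $T'|_{T^\nu}=0$, but the disjointness is part of the conclusion of the proposition, not of the case hypothesis ($B^h=T+T'$ with $T\neq T'$), so as written this is circular.

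The paper's proof shows how both issues dissolve inside the adjunction computation itself: one restricts to $T^\nu$ \emph{keeping} the extra terms, e.g.\ in Case (b)
\[
a_X^*L|_{T^\nu}\sim (K_X+T)|_{T^\nu}+T'|_{T^\nu}+B^v|_{T^\nu}\sim K_{T^\nu}+\Delta_{T^\nu}+T'|_{T^\nu}+B^v|_{T^\nu},
\]
and uses that the left-hand side is numerically trivial while $K_{T^\nu}$ is pseudoeffective (maximal Albanese dimension, Theorem~\ref{thm: cover of abelian variety}) and all other summands are effective (in Case (a) one also keeps the nef term $\tfrac12(-K_X+a_X^*L)|_{T^\nu}$). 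Hence every term is numerically trivial, and effectivity upgrades $B^v|_{T^\nu}\equiv 0$ and $T'|_{T^\nu}\equiv 0$ to actual vanishing; since $T$ dominates $A$, this yields $B^v=0$, and it yields $T\cap T'=\emptyset$ in Case (b). In other words, the vertical vanishing and the disjointness are \emph{outputs} of the same restriction argument you already use for $K_{T^\nu}\equiv 0$ and $\Delta_{T^\nu}=0$, not inputs to it; no separate $V^0$-deformation argument is needed. Two smaller omissions: you do not address nefness of $T$ in Cases (b) and (c) (the paper gets $T|_T\equiv -K_X|_T$ nef after the vanishings, and a prime divisor whose restriction to itself is nef is nef), and your normality argument via Theorem~\ref{thm: adjunction}(ii) plus Serre's criterion is a legitimate alternative to the paper's route, but it too only works after $\Delta_{T^\nu}=0$ has been established by the computation above.
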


\begin{proof}
Under this situation, the generic fiber is a curve of arithmetic genus zero over $K(A)$. 
By Theorem \ref{thm: cwz24} and Proposition \ref{prop: nonzero V0}, there exists an $L \in \Pic^0(A)$ such that 
    \begin{equation}\label{eq: nonvanishing twist pic0}
       -K_X+a_X^*L \sim B \geq 0. 
    \end{equation} 
Write $B = B^h + B^v$, where $B^h$ and $B^v$ denote the horizontal and vertical parts of $B$ over $A$, respectively.   Let $T$ be an irreducible component of $B^h$. As $\deg_{k(\eta)} B^h =2$, we have the following three cases:

    \medskip

    {\bf Case (a).} $B^h=2T$, hence $T \to A$ is generically of degree $1$;

    {\bf Case (b).} $B^h=T+T'$ with $T \neq T'$, hence $T \to A$ is still generically of degree $1$;

    {\bf Case (c).} $B^h=T$, hence $T \to A$ is generically of degree $2$.
    
  Next, we show the remaining statements case by case. 

In Case (a), applying the adjunction formula (Theorem \ref{thm: adjunction}) on the normalization $T^\nu \to T$ , we have
        \[
        \begin{split}
            a_X^*L|_{T^\nu} &\sim (K_X+T)|_{T^\nu}+T|_{T^\nu}+B^v|_{T^\nu}\\
            &\sim K_{T^\nu}+\De_{T^\nu}+\frac 1 2 (-K_X+a_X^*L-B^\nu)|_{T^\nu}+B^v|_{T^\nu}\\
            & \sim K_{T^\nu}+\De_{T^\nu}+\frac 1 2 (-K_X+a_X^*L)|_{T^\nu}+\frac 1 2 B^v|_{T^\nu}.
        \end{split}
        \] As $T^\nu$ is of maximal Albanese dimension, we have $K_{T^\nu} \succcurlyeq 0$ by Theorem \ref{thm: cover of abelian variety}. Since $\De_{T^\nu} \geq 0, B^v|_{T^\nu} \geq 0$ and $(-K_X+a_X^*L)|_{T^\nu}$ is nef, by $a_X^*L|_{T^\nu} \equiv 0$, we conlcude
        \begin{equation}\label{eq: rest T}
        K_{T^\nu} \equiv \De_{T^\nu}\equiv (-K_X+a_X^*L)|_{T^\nu}\equiv B^v|_{T^\nu}\equiv 0.
        \end{equation} Therefore, $T^\nu$ is an abelian variety and $T^\nu \to A$ is an isomorphism by Theorem \ref{thm: cover of abelian variety}. This implies that $a_X|_T: T \to A$ is an isomorphism and thus $T^{\nu} = T$. As $B^v|_{T^\nu}\equiv 0$, we have $B^v=0$.
        Therefore, $T \sim_{\mathbb{Q}} \frac 1 2 (-K_X+a_X^*L)$ is a nef divisor, and $T|_T\equiv 0$ by \eqref{eq: rest T}. 
 \medskip
        
In Case (b), applying the adjunction formula on the normalization $T^\nu \to T$ again, we have 
         \[
        \begin{split}
            a_X^*L|_{T^\nu} &\sim (K_X+T)|_{T^\nu}+T'|_{T^\nu}+B^v|_{T^\nu}\\
            &\sim K_{T^\nu}+\De_{T^\nu}+T'|_{T^\nu}+B^v|_{T^\nu}.
        \end{split}
        \] Note that we have $a_X^*L|_{T^\nu} \equiv 0$, $K_{T^\nu} \succcurlyeq 0$ (by Theorem \ref{thm: cover of abelian variety}), $\De_{T^\nu} \geq 0, T'|_{T^\nu} \geq 0, B^v|_{T^\nu} \geq 0$. From this we conclude $K_{T^\nu} \equiv \De_{T^\nu} \equiv B^v \equiv T'|_{T^\nu} \equiv 0$ and thus $T^\nu = T \simeq A$. In turn, since $a_X^*L \equiv 0$ and $-K_X$ is nef, we see that $T|_T\equiv - K_X|_T$ is nef, thus  $T$ is nef. The claimed property also holds for $T'$ by the same argument. Besides, as $K_{T^\nu}+\De_{T^\nu}+T'|_{T^\nu}+B^v|_{T^\nu} \succcurlyeq 0$, we have $a_X^*L|_{T^\nu} \succcurlyeq 0$ and thus $L$ is a torsion bundle.

In Case (c), applying the adjunction formula on the normalization $T^\nu \to T$ again, we have 
         \[
        \begin{split}
            a_X^*L|_{T^\nu} &\sim (K_X+T)|_{T^\nu}+B^v|_{T^\nu}\\
            &\sim K_{T^\nu}+\De_{T^\nu}+B^v|_{T^\nu}.
        \end{split}
        \]
         Similarly to the previous cases, we can show that    
          $a_X^*L|_{T^\nu}\equiv 0$, $K_{T^\nu} \equiv \De_{T^\nu} \equiv B^v|_{T^\nu} \equiv 0$, hence $L$ is a torsion bundle,  $T$ is normal and is isomorphic to an abelian variety, thus $a_X|_T: T \to A$ is an isogeny of abelian varieties of degree 2. In turn, since $a_X^*L \equiv 0$ and $-K_X$ is nef, we see that $T|_T\equiv - K_X|_T$ is nef, thus  $T$ is nef. 
\end{proof}

\section{Albanese morphisms with geometrically reduced fibers}\label{sec: geo reduced fibers}

\subsection{Setting}\label{sec:notation1}
Let $X$ be a smooth projective variety over an algebraically closed field $k$ of characteristic $p>0$. Assume that 
\begin{enumerate}
\item the resolution of singularities holds in dimension $<\dim X$,
    \item $-K_X$ is nef,
    \item the Albanese morphism $a_X: X \to A$ is of relative dimension one;
    \item  $a_X: X \to A$  is separable amd the generic fiber has $p_a(X_\eta)$=0.
\end{enumerate}
We have known that $a_X: X \to A$ is a fibration by Theorem \ref{thm: alb is a fibration}.

\subsection{The Albanese morphism $a_X:X\to A$ is smooth}

  \begin{proposition}\label{prop: somooth morph}
The fibration $a_X: X \to A$ is smooth.
  \end{proposition}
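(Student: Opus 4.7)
The plan is to combine generic smoothness (from separability) with Ejiri's fiberwise rigidity result (Theorem \ref{thm: iso fibers}). Since $a_X$ is separable, generic smoothness yields a nonempty open $U\subseteq A$ such that $a_X^{-1}(U)\to U$ is smooth; in particular, every closed fiber $X_u$ with $u\in U(k)$ is smooth.

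Next, I would verify that Theorem \ref{thm: iso fibers} applies with $\Delta=0$. Namely: $X$ is smooth, hence strongly $F$-regular; $-K_X$ is nef by assumption and Cartier since $X$ is smooth, so it is a nef $\Zz_{(p)}$-Cartier divisor; $a_X\colon X\to A$ is a fibration by Theorem \ref{thm: alb is a fibration}; and the geometric generic fiber $X_{\bar\eta}$ is smooth (again by separability combined with $p_a(X_\eta)=0$ and Tanaka's classification in Theorem \ref{thm: Tanaka's classfy P1}), hence strongly $F$-regular. Theorem \ref{thm: iso fibers} then gives $X_a\cong X_b$ for every pair $a,b\in A(k)$.

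Combining the two steps, since some closed fibers (those over $U(k)$, which is nonempty as $k$ is algebraically closed) are smooth, all closed fibers of $a_X$ are smooth. Finally, $a_X$ is flat by miracle flatness ($X$ Cohen--Macaulay, $A$ regular, all fibers equidimensional of dimension one), and a flat morphism whose geometric fibers are all smooth is itself smooth; therefore $a_X$ is smooth.

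There is no serious obstacle: once Theorem \ref{thm: iso fibers} is in hand, the proof reduces to verifying the hypotheses $\Delta=0$, the regularity of the generic fiber, and flatness, each of which is immediate under \textbf{(Setting $\dagger$)} together with separability of $a_X$. The only point that might deserve a separate remark is the identification $X_{\bar\eta}\cong \Pp^1_{\overline{K(A)}}$, which follows from separability (ruling out the geometrically non-reduced case in Theorem \ref{thm: Tanaka's classfy P1}) and ensures the strong $F$-regularity needed for Ejiri's theorem.
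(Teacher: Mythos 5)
Your proposal is correct and follows essentially the same route as the paper: both use Ejiri's result (Theorem \ref{thm: iso fibers}) to see that all closed fibers are isomorphic (hence equidimensional and, since the generic fiber is smooth by separability and Tanaka's classification, all smooth), then deduce flatness via miracle flatness and conclude smoothness from the fiberwise criterion. The only difference is cosmetic ordering of the steps.
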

  \begin{proof}
      By Theorem \ref{thm: iso fibers}, $a_X$ is equi-dimensional. As $X$ and $A$ are both regular, $a_X$ is flat by the miracle flatness (see \cite[\href{https://stacks.math.columbia.edu/tag/00R4}{Lemma 00R4}]{stacks-project}). And since the generic fiber $X_\eta$ is smooth over $k(\eta)$, by Theorem \ref{thm: iso fibers} we conclude that every fiber of $a_X$ is smooth. Then we can apply the smoothness criterion \cite[\href{https://stacks.math.columbia.edu/tag/01V8}{Lemma 01V8}]{stacks-project} and conclude that $a_X$ is a smooth morphism. 
  \end{proof}

\subsection{Structure of the Albanese morphisms}\label{subsec: structure of alb when separable}
By Theorem \ref{thm: iso fibers}, we have that $-K_X$ is relatively ample over $A$. Then we can apply Proposition \ref{prop: twisted section} and obtain that
there exists an $L \in \Pic^0(A)$ such that 
    \begin{equation}\label{eq: nonvanishing twist pic0}
       -K_X+a_X^*L \sim B \geq 0. 
    \end{equation}
The divisor $B$ falls into the following three cases

 {\bf Case (a).} $B=2T$

    {\bf Case (b).} $B=T+T'$ with $T \neq T'$;

    {\bf Case (c).} $B=T$ and  $T \to A$ is of degree $2$.\\
Each irreducible component $T$ of $B$ is normal, and the projection $T \to A$ is an isogeny of abelian varieties. 
\medskip

We will describe the structure of the variety $X$ in each case. Note that these three cases are not mutually exclusive because of the choices of $L$ and $B$.

\begin{theorem}\label{thm: structure-sep1}  Assume we are in Case (a) or (b) and set $\Ee = (a_X)_*\Oo_X(T)$. Then the following statements hold.
  
  (i) The sheaf $\mathcal{L}:=(a_X|_T)_*\Oo_T(T) \in \mathrm{Pic}^0(A)$, 
  $\Ee$ is a numerically flat vector bundle of rank two on $A$ fitting into the exact sequence
   $$ 0\to \Oo_A \to \Ee \to \mathcal{L} \to 0,$$
    and $X \simeq \Pp_A(\Ee)(:=\mathrm{Proj}_{\mathcal{O}_A}(\bigoplus_{n\geq 0}Sym^n \Ee))$.
  
  (ii) If $\mathcal{L} \neq \mathcal{O}_A$, then $\Ee\cong  \Oo_A \bigoplus \mathcal{L}$; otherwise 
  $\Ee$ corresponds to an element $\alpha\in H^1(A, \mathcal{O}_A)$, and there exists an isogeny $\theta: B \to A$ of abelian varieties such that $\theta^*\Ee =\mathcal{O}_B^{\oplus2}$, thus $X_{B} \simeq B \times \Pp^1$.
   
(iii) $-K_X \sim \mathcal{O}_X(2)\otimes a_X^*\mathcal{L}^{-1}$, and $h^0(X, -K_X) >0$.
\end{theorem}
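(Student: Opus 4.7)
The plan is to build the $\Pp^1$-bundle description in (i), analyze the extension class in (ii), and compute $-K_X$ and its cohomology in (iii).

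For (i), $a_X$ is smooth of relative dimension one by Proposition~\ref{prop: somooth morph}, and the section $T\to A$ from Proposition~\ref{prop: twisted section} supplies a $K(A)$-rational point on the generic fiber of $a_X$, which has arithmetic genus zero. Theorem~\ref{thm: Tanaka's classfy P1} then forces $X_\eta\cong \Pp^1_{K(A)}$, so $a_X$ is a Zariski-locally trivial $\Pp^1$-bundle. Pushing the ideal-sheaf sequence $0\to \Oo_X\to \Oo_X(T)\to \Oo_T(T)\to 0$ forward along $a_X$, and combining $(a_X)_*\Oo_X=\Oo_A$ with $R^1(a_X)_*\Oo_X=0$ (cohomology and base change together with $H^1(\Pp^1,\Oo)=0$), yields
\[
0\to \Oo_A\to \Ee\to \mathcal{L}\to 0,
\]
where $\Ee=(a_X)_*\Oo_X(T)$ has rank two and $\mathcal{L}=(a_X|_T)_*\Oo_T(T)\cong \Oo_A(T|_T)$ under $T\cong A$. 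The surjection $a_X^*\Ee\twoheadrightarrow \Oo_X(T)$ then identifies $X$ with $\Pp_A(\Ee)$. That $\mathcal{L}\in\Pic^0(A)$ follows from $T|_T\equiv 0$: this is given directly in Case (a) by Proposition~\ref{prop: twisted section}, and in Case (b) by combining adjunction on $T\cong A$ (so $K_X|_T=-T|_T$) with $K_A=0$ and the disjointness $T\cap T'=\emptyset$. Numerical flatness of $\Ee$ then follows from its extension description together with Theorem~\ref{thm: num flat on abelian}.

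For (ii), if $\mathcal{L}\neq \Oo_A$ then $\mathrm{Ext}^1(\mathcal{L},\Oo_A)=H^1(A,\mathcal{L}^{-1})=0$ by Mumford vanishing for non-trivial elements of $\Pic^0(A)$, so the defining extension splits and $\Ee\cong \Oo_A\oplus \mathcal{L}$. If $\mathcal{L}=\Oo_A$, then $\Ee$ is a unipotent rank-two bundle classified by $\alpha\in H^1(A,\Oo_A)$, and the task reduces to producing an isogeny $\theta\colon B\to A$ with $\theta^*\alpha=0$. For this, invoke Theorem~\ref{thm: num flat on abelian} and the standard fact that in positive characteristic every unipotent bundle on an abelian variety is trivialized after pullback by a suitable isogeny, produced for instance from the annihilator of the corresponding finite-length module supported at $0\in \hat A$ under Fourier--Mukai. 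Then $X_B=\Pp_B(\theta^*\Ee)\cong B\times \Pp^1$.

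For (iii), the relative canonical formula for $X=\Pp_A(\Ee)\to A$ yields $K_X\sim -2\xi+a_X^*(K_A+\det\Ee)=-2\xi+a_X^*\mathcal{L}$ with $\xi=c_1(\Oo_X(1))$, hence $-K_X\sim \Oo_X(2)\otimes a_X^*\mathcal{L}^{-1}$ and
\[
h^0(X,-K_X)=h^0(A,\mathrm{Sym}^2\Ee\otimes \mathcal{L}^{-1}).
\]
The filtration of $\mathrm{Sym}^2\Ee$ induced by the extension has graded pieces $\Oo_A,\mathcal{L},\mathcal{L}^{\otimes 2}$, so after twisting by $\mathcal{L}^{-1}$ the graded pieces become $\mathcal{L}^{-1},\Oo_A,\mathcal{L}$. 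If $\mathcal{L}\neq \Oo_A$, the outer pieces have vanishing $H^0$ and $H^1$ by Mumford vanishing, and the long exact sequence yields $h^0\geq h^0(\Oo_A)=1$. If $\mathcal{L}=\Oo_A$, then $\mathrm{Sym}^2\Ee$ is itself unipotent with $\Oo_A$ at the bottom of its filtration, so $h^0\geq 1$ again.

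The principal obstacle I foresee is the unipotent case of (ii): producing a genuine isogeny that trivializes the vector bundle $\Ee$ itself, rather than merely the projective bundle $\Pp(\Ee)$. This requires a careful exploitation of the unipotent structure together with a Frobenius-descent or Fourier--Mukai argument to construct the isogeny killing $\alpha\in H^1(A,\Oo_A)$, and is the most technical step of the proof.
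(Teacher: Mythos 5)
Your overall route is the paper's route: push forward $0\to\Oo_X\to\Oo_X(T)\to\Oo_T(T)\to 0$, get $X\simeq\Pp_A(\Ee)$ from the relatively ample section, split the extension via $\mathrm{Ext}^1(\Ll,\Oo_A)=0$ when $\Ll\neq\Oo_A$, kill the class in $H^1(A,\Oo_A)$ by an isogeny when $\Ll=\Oo_A$, and compute $h^0(-K_X)=h^0(\mathrm{Sym}^2\Ee\otimes\Ll^{-1})$ via the filtration. Your worry about the unipotent case is not a real obstacle: the extension class lies in $H^1(A,\Oo_A)\cong\mathrm{Lie}(\hat A)$, on which $[p]^*$ acts as multiplication by $p=0$, so the multiplication-by-$p$ isogeny already trivializes the pulled-back extension (the paper simply cites a killing-cohomology lemma); no Fourier--Mukai machinery is needed, and trivializing $\Ee$ itself, not just $\Pp(\Ee)$, comes for free once the extension class dies.

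The genuine gap is in Case (b), in your justification that $\Ll\in\Pic^0(A)$, i.e.\ that $T|_T\equiv 0$. Adjunction on the smooth divisor $T\cong A$ gives $K_X|_T\sim -T|_T$; but substituting this into $-K_X\sim T+T'-a_X^*L$ restricted to $T$, using $T\cap T'=\emptyset$, only returns the tautology $a_X^*L|_T\sim 0$ — it tells you nothing beyond the nefness of $T|_T\equiv -K_X|_T$, and nef is not numerically trivial. Note that Proposition \ref{prop: twisted section} in Case (b) asserts $T'|_T\equiv 0$ (disjointness) and that $T,T'$ are nef, but, unlike Case (a), it does \emph{not} assert $T|_T\equiv 0$; indeed for $\Pp_A(\Oo_A\oplus M)$ with $M$ ample the analogous section has $T|_T\equiv M\not\equiv 0$, so some use of the nefness of $-K_X$ comparing the \emph{two} sections is unavoidable. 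The paper supplies exactly this step: since $T$ and $T'$ are both sections, $\Oo_X(T-T')$ is trivial on every fiber, hence $\Oo_X(T-T')\cong a_X^*\Ll_A$ for a line bundle $\Ll_A$ on $A$; restricting to $T$ gives $\Ll_A\equiv T|_T$ nef, restricting to $T'$ gives $\Ll_A\equiv -T'|_{T'}$ anti-nef, so $\Ll_A\equiv 0$, whence $T\equiv T'$ and $T|_T\equiv T'|_T=0$. (Equivalently, the two disjoint sections split $\Ee\cong\Ll_1\oplus\Ll_2$ and nefness of $-K_X$ on both sections forces $\Ll_1\equiv\Ll_2$.) Without this comparison, the claims that $\Ll\in\Pic^0(A)$ and that $\Ee$ is numerically flat — and hence the vanishing arguments you use in (ii) and (iii) — are unsupported in Case (b).
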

\begin{proof} 
(i) In Case (a) or (b), by Proposition \ref{prop: twisted section} (ii), $a_X$ admits a section $\sigma: A \to T$. Since $a_X$ is a smooth fibration whose every closed fiber is isomorphic to $\mathbb{P}^1$, it follows that $T$ is very ample on each fiber, and hence relatively ample over $A$. In turn, we conclude that  $\Ee=(a_X)_*\Oo_X(T)$ is a vector bundle of rank two on $A$ by Grauert's Theorem \cite[\Romannum{3}, Corollary 12.9]{Har77}, and thus $X \simeq \Pp_A(\Ee)$ by exactly the same proof of \cite[\Romannum{5}, Proposition 2.2]{Har77}. 

In the following, we show that $\Ee$ is a numerically flat vector bundle on $A$. 
 Applying $(a_X)_*$ to the exact sequence 
\[
0 \to \Oo_X\to \Oo_X(T) \to \Oo_T(T) \to 0,
\] we have 
        \[
        0\to \Oo_A \to \Ee \to (a_X|_T)_*\Oo_T(T)\simeq \sigma^*\Oo_T(T) \to R^1(a_X)_*\Oo_X=0.
        \] 
 Due to Theorem \ref{thm: num flat on abelian} and the above exact sequence, it suffices to show that $\Oo_T(T)$ is numerically trivial.
 \begin{itemize}
     \item In Case (a), this is Proposition \ref{prop: twisted section} (ii) Case (a).
     \item In Case (b), note that $a_X$ is a smooth morphism and $\Oo_X(T-T')|_F\simeq \Oo_F$ for each fiber $F$ of $a_X$. Thus, $\Ll_A:= a_{X*}\Oo_X(T-T')$ is line bundle on $A$ and the natural morphism $a_X^*\Ll_A=a_X^*a_{X*}\Oo_X(T-T')\rightarrow \Oo_X(T-T')$ is isomorphism. Restricting the above line bundle on $T$ and $T'$ respectively, by Proposition \ref{prop: twisted section} (ii) Case (b) we have that $a_X^*\Ll_A|_T=\Oo_X(T-T')|_T=\Oo_X(T)|_T$ is nef, and $a_X^*\Ll_A|_{T'}\cong\Oo_X(T-T')|_{T'}=\Oo_X(-T')|_{T'}$ is anti-nef. It follows that $\Ll_A$ is a numerically trivial line bundle and thus $T\equiv T'$. Therefore, $T|_T\equiv T'|_T=0$ and $T'|_{T'}\equiv T|_{T'}=0$.
 \end{itemize}
      
(ii) The first part follows from ${\rm Ext}^1(\Ll, \Oo_A) =0$ if $\mathcal{L} \neq \mathcal{O}_A$ (see \cite[\S 9, 9.15]{EVB24}). And the second part is a consequence of killing cohomology,  see \cite[Lemma 1.3]{HPZ19}.

(iii) It is well known that $-K_X \sim \mathcal{O}_X(2)\otimes a_X^*\mathcal{L}^{-1}$. Then by $a_{X*}\mathcal{O}_X(-K_X) \cong {\rm Sym}^2 \Ee \otimes \mathcal{L}^{-1}$, it follows that $H^0(X, -K_X) \cong H^0(A,  {\rm Sym}^2 \Ee \otimes \mathcal{L}^{-1}) \neq 0$.

\end{proof}
 
\begin{theorem}\label{thm: structure-sep2}
Assume we are in Case (c). Then the following statements hold.

    (i) The base change  $Y=T \times_A X \simeq \Pp_T(\Gg)$ where $\Gg$ is a numerically flat vector bundle of rank $2$.
    
    (ii) If moreover $T \to A$ is inseparable, then $T$ is semiample, and there exists an isogeny $B \to A$ of abelian varieties such that $X_{B} \simeq B \times \Pp^1$.
    
    (iii) $H^0(X, \Oo_X(-2K_X)) \neq 0$. 
\end{theorem}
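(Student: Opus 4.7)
The plan is to base-change $a_X : X \to A$ along $q := a_X|_T : T \to A$ and reduce the resulting $Y := X \times_A T \to T$ to the setting of Theorem~\ref{thm: structure-sep1}. For (i), let $\pi : Y \to X$ and $a_Y : Y \to T$ be the projections. Because $a_X$ is smooth (Proposition~\ref{prop: somooth morph}), $a_Y$ is a smooth $\Pp^1$-fibration over the abelian variety $T$ and $Y$ is smooth; the Albanese of $Y$ is $a_Y$ itself, so $Y$ still lies in Setting~$\dagger$. Pulling back $-K_X + a_X^*L \sim T$, the fibre product $T \times_A T$ gives $\pi^*T = T_1 + T_2$ (two disjoint sections of $a_Y$) when $q$ is \'etale, and $\pi^*T = 2T_0$ (a single section with multiplicity) when $q$ is purely inseparable. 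Thus $a_Y$ falls into Case~(a) or (b) of Proposition~\ref{prop: twisted section} over $T$, and Theorem~\ref{thm: structure-sep1}(i) then yields $Y \simeq \Pp_T(\Gg)$ with $\Gg := (a_Y)_*\Oo_Y(T_0)$ numerically flat of rank two.

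For (ii), suppose $q$ is purely inseparable, so $p = 2$. Adjunction on the smooth divisor $T \subset X$ combined with $K_T = 0$ yields $K_X|_T \sim -T|_T$, so restricting $-K_X + a_X^*L \sim T$ to $T$ forces $q^*L \sim 0$. The dual of a purely inseparable isogeny of abelian varieties is again purely inseparable, so $\ker q^*$ has trivial $k$-points; hence $L = 0$ and $-K_X \sim T$. Next, Proposition~\ref{prop:can-foliation} applied to $q$ with $K_A = K_T = 0$ forces $\det \mathcal F_{T/A} = 0$, and base-change gives $\det \mathcal F_{Y/X} = 0$, so $\pi^*K_X = K_Y$ and $-K_Y = \pi^*T = 2T_0 = \Oo_Y(2)$. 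Comparing with $-K_Y = \Oo_Y(2) \otimes a_Y^*(\det \Gg)^{-1}$ and using that $a_Y^*$ is injective on Picard groups (the section $T_0$ splits it) forces $\det \Gg \simeq \Oo_T$, so $\Gg$ is unipotent. By Theorem~\ref{thm: structure-sep1}(ii) a (purely inseparable) isogeny $\phi : B \to T$ satisfies $\phi^*\Gg \simeq \Oo_B^{\oplus 2}$, whence $X_B \simeq B \times \Pp^1$. Semiampleness of $T$ follows: $-K_{X_B} = p_2^*\Oo_{\Pp^1}(2)$ is semiample on $X_B$, and since $X_B \to X$ is purely inseparable, a sufficiently high Frobenius iterate of $X$ factors through $X_B$; pulling back along such a factorization transfers semiampleness to a multiple of $-K_X \sim T$ on $X$.

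For (iii), the adjunction step in (ii) needs no separability hypothesis, so $L \in \ker q^*$ unconditionally; as $q$ has degree two, the subgroup scheme $\ker q^*$ has length two and hence $2L \sim 0$. Therefore $-2K_X \sim 2T$ is effective, giving $h^0(X, \Oo_X(-2K_X)) \neq 0$. The principal obstacle is the triple vanishing in (ii)---$L = 0$, $\det \mathcal F_{T/A} = 0$, and $\det \Gg \simeq \Oo_T$---each leveraging the specific arithmetic of purely inseparable isogenies of abelian varieties and the triviality of $K_T, K_A$; once these are in hand, the unipotent trivialization and Frobenius descent of semiampleness are standard.
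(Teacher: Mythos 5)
Your strategy for (i) --- base-change along $q=a_X|_T$, observe that $\pi^*T$ becomes $T_1+T_2$ or $2T_0$ so that $Y\to T$ lands in Case (a) or (b), and invoke Theorem \ref{thm: structure-sep1}(i) --- is exactly the paper's, and that part is fine. Your argument for (iii) is genuinely different from the paper's (which takes the norm of a section of $-K_Y$ along the degree-two cover $\pi$): you instead note that $q^*L\sim 0$ forces $L$ to be a $k$-point of $\ker\hat q$, a group scheme of order two, hence $2L\sim 0$ and $-2K_X\sim 2T\geq 0$. That argument is correct and arguably cleaner.

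Part (ii), however, contains two steps that fail as written. First, the claim that the dual of a purely inseparable isogeny of abelian varieties is again purely inseparable is false: the kernel of a degree-two purely inseparable isogeny in characteristic $2$ is $\alpha_2$ or $\mu_2$, and the Cartier dual of $\mu_2$ is $\Zz/2$, which is \'etale (concretely, the dual of the Frobenius isogeny of an ordinary abelian variety is the Verschiebung, which is separable). So you cannot conclude $L=0$; you only get that $L$ is $2$-torsion. Luckily $L=0$ is never needed: $q^*L\sim 0$ already gives $-K_Y=\pi^*(-K_X)\sim \pi^*T=2T_0$, which is all the rest of your argument uses. Second, the isogeny $B\to T$ produced by killing the extension class in $H^1(T,\Oo_T)$ in Theorem \ref{thm: structure-sep1}(ii) (via \cite[Lemma 1.3]{HPZ19}) need not be purely inseparable: killing a class $\alpha\in H^1(T,\Oo_T)$ amounts to choosing an isogeny whose dual has $\alpha$ in the Lie algebra of its kernel, and the corresponding kernel on the $B\to T$ side can be \'etale. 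Hence ``$X_B\to X$ is purely inseparable, so a high Frobenius iterate of $X$ factors through $X_B$'' is unjustified, and your transfer of semiampleness from $X_B$ back to $X$ breaks down. This is repairable --- semiampleness descends along arbitrary finite surjective morphisms of projective varieties (Keel's descent lemma), or one can argue as the paper does by showing $S|_S\sim 0$ for the reduced section $S$ with $\pi^*T=2S$ and citing \cite[Lemma 4.1]{Tot09b} --- but as submitted these two steps are genuine gaps.
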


\begin{proof}
        In this case, $\tau=(a_X)|_T: T\to A$ is an isogeny of abelian varieties of degree 2. 
        Do the base change $\tau:T\to A$
        \[\xymatrix{&Y=T \times_A X \ar[r]^<<<<{\pi}\ar[d] &X\ar[d]^{a_X}\\
&T\ar[r]_{\tau} &A,}
\]
where the projection $Y \to T$ is also a smooth morphism coinciding with the Albanese morphism.

(i) As $a_X$ is a smooth morphism by Proposition \ref{prop: somooth morph}, the projection $Y \to T$ is also a smooth morphism. Since $T$ is smooth, $Y$ is also smooth (see \cite[Chapter 4, Theorem 3.3]{Liu02}). We have that $K_Y = \pi^*K_X$, and thus $-K_Y$ is nef. 
As a result, $Y$ falls into Case (a) or (b). In turn, the assertion (i) follows from applying Theorem \ref{thm: structure-sep1} (i).

 (ii)       If $\tau$ is inseparable then we can write that $\pi^*T = 2S$, and $S \to T$ is an isomorphism. Applying the adjunction formula, we have that 
 $$(K_X+ T)|_T \sim \mathcal{O}_T~{\rm and~ thus} ~\pi^*(K_X +T)|_S \sim (K_Y+ S+S)|_S \sim \mathcal{O}_S.$$
        From  this we conclude that $S|_S \sim 0$ and thus $S$ is semi-ample on $Y$ by
        \cite[Lemma 4.1]{Tot09b}. 
         Under this situation we have that $Y=\Pp_T((a_Y)_*\mathcal{O}_Y(S))$, and $0 \to \Oo_T \to (a_Y)_*\mathcal{O}_Y(S) \to \Oo_T \to 0$. We can apply Theorem \ref{thm: structure-sep1} (ii) and obtain that there exists an isogeny $B \to A$ of abelian varieties such that $X_{B} \simeq B \times \Pp^1$.

      (iii)  By Theorem \ref{thm: structure-sep1} (iii) we have $H^0(Y, -K_Y) \neq 0$. Since $\pi: Y\to X$ is a finite morphism of degree two, the norm of a nonzero global section of $\Oo_Y(-K_Y)\cong \pi^*\Oo_X(-K_X)$ gives a nonzero section of  $\Oo_X(-2K_X)$.    
     \end{proof}

\section{Albanese morphisms with geometrically non-reduced fibers}\label{sec: geo non-reduced fibers}

\subsection{Setting}\label{sec:notation2}
Let $X$ be a smooth projective variety over an algebraically closed field $k$ of characteristic $p>0$. Assume that 
\begin{enumerate}
\item the resolution of singularities holds in dimension $<\dim X$,
    \item $-K_X$ is nef,
    \item the Albanese morphism $a_X: X \to A$ is of relative dimension one,  which is a fibration by Theorem \ref{thm: alb is a fibration};
    \item  $a_X: X \to A$  is inseparable and the generic fiber has $p_a(X_\eta)$=0.
\end{enumerate}
This situation happens only when $\chara k=2$ by Theorem \ref{thm: Tanaka's classfy P1}. 
In the following, we use $f =a_X: X\to A$.

\subsection{Structure theorem}
By Theorem \ref{thm: Tanaka's classfy P1}, $X_{k(A)^{\frac 1 p}}$ is non-reduced. Let
    \[
    Y\coloneqq (X_{A^{\frac 1 p}})_{\rm red}^\nu
    \] to be the normalization of $(X_{A^{\frac 1 p}})_{\rm red}$.    

\begin{proposition}\label{prop: semiample}
We have $Y\cong  A^{\frac 1 p}\times \Pp^1$, $-K_X$ is semi-ample and $f$-ample.
\end{proposition}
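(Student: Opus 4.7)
My plan is first to establish $Y\cong A^{1/p}\times\Pp^1$ and then to deduce the semi-ampleness and $f$-ampleness of $-K_X$ from this structure. Since $a_X$ is inseparable with $p_a(X_\eta)=0$, Theorem~\ref{thm: Tanaka's classfy P1}(4) forces $p=2$, and $X_\eta$ is of the form $\{sx^2+ty^2+z^2=0\}$ over $K=k(A)$ with $s,t\in K\setminus K^2$. The theorem supplies a degree-$4$ extension $K'=K(\sqrt s,\sqrt t)\subset K^{1/p}=k(A^{1/p})$ with $(X_\eta\times_K K')^\nu_{\mathrm{red}}\cong\Pp^1_{K'}$, so the generic fiber of $g\colon Y\to A^{1/p}$ is $\Pp^1_{k(A^{1/p})}$. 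Globally, $X\subset\Pp(\Ee)\to A$ is cut out by a quadratic form $q\in H^0(A,\mathrm{Sym}^2\Ee^\vee\otimes\mathcal L)$, and the characteristic-$2$ factorization of the pullback of $q$ to $A^{1/p}$ yields $q=\ell^2$ for a linear form $\ell\in H^0(A^{1/p},\Ee^\vee\otimes\mathcal M)$ (with $\mathcal M$ an appropriate square root on $A^{1/p}$ of the pullback of $\mathcal L$). Consequently $(X\times_A A^{1/p})_{\mathrm{red}}=\{\ell=0\}$ is a smooth $\Pp^1$-subbundle of $\Pp(\Ee)\times_A A^{1/p}$; being normal it equals $Y$, and $Y\cong\Pp_{A^{1/p}}(\Gg)$ with $\Gg:=\ker(\Ee|_{A^{1/p}}\twoheadrightarrow\mathcal M)$ a rank-two vector bundle.

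To trivialize $\Gg$ as $\Oo_{A^{1/p}}^{\oplus 2}$ (up to a common twist absorbed in the $\Pp^1$-scaling), I would view $g$ as a separable fibration of relative dimension one with rational generic fiber and adapt the methods of Section~\ref{sec: geo reduced fibers}. The finite purely inseparable morphism $\pi\colon Y\to X$ is of height one --- $k(Y)=k(X)\cdot k(A^{1/p})$ and every element of $k(A^{1/p})$ has its $p$-th power in $k(A)\subset k(X)$ --- so Proposition~\ref{prop:can-foliation} yields $\pi^*K_X\sim K_Y+(p-1)\det\Omega^1_{Y/X}$, with $\det\Omega^1_{Y/X}$ analyzed via Proposition~\ref{prop: CWZ23prop3.3} applied to $\tau\colon A^{1/p}\to A$. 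Running Proposition~\ref{prop: twisted section} together with Theorems~\ref{thm: structure-sep1} and \ref{thm: structure-sep2} for $g$ places us in Case~(a) or (b) (Case (c) would force a further isogeny base change beyond $A^{1/p}\to A$, which is ruled out by the explicit geometry of $\ell$), producing two disjoint horizontal sections of $g$ coming directly from the linear form $\ell$; these split $\Gg$ as a sum of two isomorphic line bundles, giving $Y\cong A^{1/p}\times\Pp^1$.

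Once $Y\cong A^{1/p}\times\Pp^1$, $-K_Y$ is the pullback of $\Oo_{\Pp^1}(2)$ and is base-point-free, hence semi-ample. By Proposition~\ref{prop:can-foliation}, $\pi^*(-K_X)=-K_Y-(p-1)\det\Omega^1_{Y/X}$ with the correction term effective and with controlled fixed/movable parts, so a norm/descent argument along the finite surjective $\pi$ extracts from base-point-free sections of a multiple of $-K_Y$ sections of a multiple of $-K_X$ whose base locus on $X$ is empty, yielding semi-ampleness of $-K_X$ on $X$. The $f$-ampleness then follows from the observation that $-K_X$ has positive degree on every one-dimensional (possibly non-reduced) fiber of $f$ (degree $2$ on the underlying reduced line). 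The main difficulty is the trivialization step: in the separable analysis of Section~\ref{sec: geo reduced fibers} one typically obtains only a projective-bundle structure, possibly requiring further isogeny base change, and concluding here that no additional base change is necessary relies crucially on the explicit linear form $\ell$ produced by the characteristic-$2$ factorization, which intrinsically provides the two disjoint global sections needed to split $\Gg$.
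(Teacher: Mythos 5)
Your overall strategy diverges from the paper's, and it contains gaps that I do not see how to close. The most serious is the very first step: you realize $X$ globally as a conic $\{q=0\}$ inside a $\Pp^2$-bundle $\Pp(\Ee)\to A$. This presupposes that $-K_X$ is relatively very ample over all of $A$ and that $f_*\Oo_X(-K_X)$ is locally free of rank $3$ with $f^*f_*\Oo_X(-K_X)\to\Oo_X(-K_X)$ surjective; but $f$-ampleness of $-K_X$ is part of the conclusion being proved, and in the inseparable case Theorem \ref{thm: iso fibers} is unavailable (the geometric generic fibre is non-reduced, hence not strongly $F$-regular), so you have no control over the special fibres --- Tanaka's classification only describes the generic fibre. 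Even granting the conic model, $\{\ell=0\}$ is a $\Pp^1$-subbundle only where $\ell$ is nowhere vanishing, which you do not verify. The second gap is the trivialization of $\Gg$: to run Proposition \ref{prop: twisted section} and Theorems \ref{thm: structure-sep1}--\ref{thm: structure-sep2} on $g\colon Y\to A^{1/p}$ you would need $-K_Y$ to be nef, whereas a priori $-K_Y\sim-\pi^*K_X+\det\Omega^1_{Y/X}$ is only nef plus effective; and the ``two disjoint sections coming from $\ell$'' are not there --- a single linear form cuts out one $\Pp^1$-subbundle, not a pair of sections, and even two disjoint sections would only split $\Gg$ into two line bundles with no reason for them to be isomorphic. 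Finally, you deduce semi-ampleness of $\pi^*(-K_X)\sim-K_Y-\det\Omega^1_{Y/X}$ by subtracting an effective divisor from a base-point-free one; that operation does not preserve semi-ampleness unless one first identifies $\det\Omega^1_{Y/X}$ with a fibre of the ruling.

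The paper's route avoids all of this. It applies Proposition \ref{prop: CWZ23prop3.3} to $\tau\colon A^{1/p}\to A$ to produce a movable part $\mathfrak M$ of $|\det\Omega^1_{Y/X}|$ of relative degree $1$ over $A^{1/p}$ (with fixed part $F$ of relative degree $0$), and then feeds $(Y, M_0+F)$ into the semi-ampleness criterion Proposition \ref{prop: cwz23 prop 5.2}: the nefness hypothesis there is on $-(K_Y+M_0+F)\sim-\pi^*K_X$, not on $-K_Y$, and this is exactly what is available since $-K_X$ is nef. The criterion yields a base-point-free pencil $h\colon Y\to\Pp^1$ whose fibres map isomorphically to $A^{1/p}$, so $(g,h)\colon Y\to A^{1/p}\times\Pp^1$ is an isomorphism; it also forces $F=0$, whence $-\pi^*K_X\sim h^*\Oo_{\Pp^1}(1)$ is semi-ample and $g$-ample, and one descends along the finite surjection $\pi$. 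You should rebuild your argument around this use of $\det\Omega^1_{Y/X}$ rather than a global conic model.
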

\begin{proof}
Let $\pi: Y \to X$ be the natural morphism. We obtain the following commutative diagram
        \[
\begin{tikzcd}
Y \arrow{rd}{g}\arrow{r}{\nu}\arrow[bend right=-30]{rr}{\pi} & (X_{A^{\frac 1 p}})_{\rm red} \arrow{r} \arrow{d}
& X\arrow{d}{f}\\
 &A^{\frac 1 p}\arrow{r}& A.
\end{tikzcd}
\] 
Applying Theorem \ref{thm: Tanaka's classfy P1}, we can show that the geometric generic fiber of $g:Y \to  A^{\frac 1 p}$ is isomorphic to $\mathbb{P}_{K(A)^{1/p}}^1$, hence $g:Y \to  A^{\frac 1 p}$ is a fibration. 
By results of Section \ref{sec: foliation}, we have 
    \begin{equation}\label{eq: pullback}
    \pi^*K_X \sim K_Y+(p-1) \det \Omega_{Y/X}^1.
    \end{equation}
Since $f:X \to A$ is inseparable, and $\Omega^1_{A^{\frac 1 p}/A}$ is globally generated, applying Proposition \ref{prop: CWZ23prop3.3}  shows that the linear system of $|\det \Omega_{Y/X}^1|$ contains horizontal movable part. We may write that    
    \[
    |\det \Omega_{Y/X}^1| = \mathfrak M +F,
    \] where $F$ is the fixed part and $\mathfrak M$ is a non-empty movable system. 
To summarize, we have that the pull-back of the anti-canonical divisor is like
$$-\pi^*K_X \sim -(K_Y + \mathfrak M +F).$$
By Proposition \ref{prop: CWZ23prop3.3} and Theorem \ref{thm: Tanaka's classfy P1} $(3)$, we have $\deg_{K(A)^{1/p}} F=0$ and $ \deg_{K(A)^{1/p}}\mathfrak M>0$. Note that $\deg_{K(A)^{1/p}}(-K_Y) = 2$ and $\deg_{K(A)^{1/p}}(-\pi^*K_X) > 0$. From this we can conclude that $\deg_{K(A)^{1/p}}\mathfrak M = \deg_{K(A)^{1/p}}(-\pi^*K_X) = 1$. Applying  Proposition \ref{prop: cwz23 prop 5.2}, the linear system $\mathfrak M$ has no base point and induces a morphism $h: Y \to \mathbb{P}^1$. Each closed fiber $Y_t$ of $h$ is an element of $\mathfrak M$, and the projection $Y_t\to X\cong A^{\frac 1 p}$ is an isomorphism. We can conclude that $(g,h): Y \to  A^{\frac 1 p}\times \Pp^1$ is an isomorphism. Moreover, Proposition \ref{prop: cwz23 prop 5.2} (iii) implies that $F|_{Y_t}\equiv 0$ for general fiber of $h$ over $t\in\Pp^1$. Combining with $\deg _{K(A)^{1/p} }F=0$, the fixed part $F$ is effective vertical divisor of $g$ and $h$ simultaneously, and then $F=0$.

 Thus, $-\pi^*K_X = -(K_Y + \mathfrak M)$ is semi-ample and $g$-ample, which implies that $-K_X$ is semi-ample and $f$-ample.
\end{proof}

\begin{theorem}\label{thm: structure-inseparable}
In the situation of Setting \ref{sec:notation2}, the following statements hold.

(i) There exists an isogeny $A' \to A$ of abelian varieties, which is purely inseparable of degree $4$, such that $X \times_A A'$ is not reduced, and the normalization of $(X \times_A A')_{\mathrm{red}}$ is isomorphic to $A' \times \mathbb{P}_k^1$.
    
(ii) There is an isomorphism $X \cong (A' \times \mathbb{P}_k^1)/\mathcal{F}$, where $\mathcal{F}$ is a smooth rank one foliation on $A' \times \mathbb{P}_k^1$ such that $\mathcal{F} \sim h^*\mathcal{O}_{\mathbb{P}^1}(-1)$, here $h: A' \times \mathbb{P}_k^1 \to \mathbb{P}_k^1$ is the projection onto the second factor.
    
(iii) $h^0\big(X, \mathcal{O}_X(-2K_X)\big) > 0$.
\end{theorem}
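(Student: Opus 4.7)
The plan is to tackle (i) by applying Theorem~\ref{thm: Tanaka's classfy P1}(4) to the generic fibre of $a_X$ and descending the resulting degree-$4$ purely inseparable field extension to an isogeny of abelian varieties, then to obtain (ii) via the canonical bundle formula for purely inseparable morphisms, and finally to deduce (iii) by squaring a pullback section.

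For (i), the generic fibre $X_\eta$ is a geometrically non-reduced conic of arithmetic genus zero over $K(A)$, defined by $sx^2+ty^2+z^2=0$ with $s,t\in K(A)\setminus K(A)^2$ by Theorem~\ref{thm: Tanaka's classfy P1}(3). Theorem~\ref{thm: Tanaka's classfy P1}(4) then supplies the purely inseparable extension $K'=K(A)(\sqrt s,\sqrt t)$ of degree exactly $4$, over which $X_\eta$ becomes non-reduced with reduction-normalization isomorphic to $\mathbb{P}^1_{K'}$. The crux is to realize $K'$ as $K(A')$ for an isogeny of abelian varieties $A'\to A$: I plan to construct $A'$ as the fibre product over $A$ of the two purely inseparable height-one covers of $A$ associated to the square classes of $s$ and $t$, equivalently as an intermediate quotient $A^{1/p}\to A'\to A$ of the Frobenius isogeny by the infinitesimal subgroup scheme of $\ker(A^{1/p}\to A)$ cut out by the subfield $K'\subseteq K(A^{1/p})$. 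The abelian variety structure on $A'$ descends from that of $A^{1/p}$, and $A'\to A$ is purely inseparable of the required degree $4$. With $A'$ in hand, the identification $(X\times_A A')_{\mathrm{red}}^{\nu}\cong A'\times\mathbb{P}^1_k$ follows by re-running the argument of Proposition~\ref{prop: semiample} (after factoring $A'\to A$ into two height-one isogenies): Proposition~\ref{prop: CWZ23prop3.3} produces a horizontal movable part in $|\det\Omega^1_{(A'\times\mathbb{P}^1)/X}|$, and Proposition~\ref{prop: cwz23 prop 5.2} upgrades this to a base-point-free pencil of relative degree one defining the $\mathbb{P}^1_k$-factor.

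For (ii), set $Y'=A'\times\mathbb{P}^1_k$ and let $\pi\colon Y'\to X$ be the morphism obtained in (i). A direct function field count gives $[K(Y'):K(X)]=4/2=p$, so $\pi$ is finite purely inseparable of height one; by the correspondence of Subsection~\ref{sec: foliation} it is induced by a saturated rank one foliation $\mathcal{F}\subseteq\mathcal{T}_{Y'}$ with $X\cong Y'/\mathcal{F}$, and the smoothness of $X$ forces $\mathcal{F}$ to be smooth. To pin down the class of $\mathcal{F}$, I combine Proposition~\ref{prop:can-foliation}, which in characteristic $2$ reads $\pi^*K_X\sim K_{Y'}-\det\mathcal{F}$, with the identity $K_{Y'}\sim h^*\mathcal{O}_{\mathbb{P}^1}(-2)$ and the relation $\pi^*K_X\sim h^*\mathcal{O}_{\mathbb{P}^1}(-1)$ obtained by redoing the $\det\Omega^1_{Y'/X}\sim h^*\mathcal{O}_{\mathbb{P}^1}(1)$ computation of Proposition~\ref{prop: semiample} on $Y'$; this yields $\det\mathcal{F}\sim h^*\mathcal{O}_{\mathbb{P}^1}(-1)$, and since $\mathrm{rank}\,\mathcal{F}=1$ we conclude $\mathcal{F}\sim h^*\mathcal{O}_{\mathbb{P}^1}(-1)$.

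For (iii), the computation in (ii) gives $\pi^*\mathcal{O}_X(-K_X)\cong h^*\mathcal{O}_{\mathbb{P}^1}(1)$, so pulling back any nonzero section of $\mathcal{O}_{\mathbb{P}^1}(1)$ yields a nonzero $\tau\in H^0(Y',\pi^*\mathcal{O}_X(-K_X))$. Because $\pi$ is purely inseparable of height one, the $p$-th power of any element of $\pi_*\mathcal{O}_{Y'}$ lies in $\mathcal{O}_X$, so $\tau^2$ defines an element of $H^0(X,\mathcal{O}_X(-2K_X))$ which is nonzero because squaring is injective on nonzero sections of an invertible sheaf. The principal obstacle throughout is part (i), namely realizing the abstract degree-$4$ extension $K'/K(A)$ as the function field extension of an honest isogeny of abelian varieties and verifying the product form of the normalized reduced base change; once (i) is in place the rest is essentially forced by Proposition~\ref{prop:can-foliation} together with the semi-ampleness of $-K_X$ already proved in Proposition~\ref{prop: semiample}.
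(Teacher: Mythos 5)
There is a genuine gap, and it sits exactly at the point you yourself identify as the crux of part (i). You propose to realize the degree-$4$ extension $K'=K(A)(\sqrt{s},\sqrt{t})$ as $K(A')$ for an isogeny $A'\to A$ by taking ``the height-one covers of $A$ associated to the square classes of $s$ and $t$'', equivalently the quotient of $A^{1/p}$ by ``the infinitesimal subgroup scheme of $\ker(A^{1/p}\to A)$ cut out by the subfield $K'$''. This construction does not exist in general: by the Jacobson/Ekedahl correspondence recalled in Subsection~\ref{sec: foliation}, an intermediate field $K(A)\subseteq K'\subseteq K(A)^{1/p}$ corresponds to a normal variety $Z$ with purely inseparable morphisms $A^{1/p}\to Z\to A$, namely the quotient of $A^{1/p}$ by a foliation; only when that foliation is translation-invariant (equivalently, spanned over $\mathcal{O}_{A^{1/p}}$ by a $k$-sub-$p$-Lie algebra of $H^0(A^{1/p},T_{A^{1/p}})$, i.e.\ comes from a subgroup scheme of the kernel) is $Z$ an abelian variety and $Z\to A$ an isogeny. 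A subfield generated by square roots of two rational functions $s,t$ read off from the equation of the conic $X_\eta$ has no a priori reason to be of this invariant form, so ``the abelian variety structure on $A'$ descends from $A^{1/p}$'' is unjustified; indeed the fact that this particular $K'$ is the function field of an abelian variety isogenous to $A$ is essentially a consequence of the theorem being proved, so assuming it is close to circular.

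This is precisely where the paper has to use the positivity hypothesis rather than field theory. Its route is: Proposition~\ref{prop: semiample} makes $-K_X$ semi-ample and $f$-ample; Proposition~\ref{prop: twisted section} then produces an effective horizontal divisor $T\sim -K_X+f^*L$ with $T\to A$ an isogeny of degree $2$; base change to $T$ (reduced by Theorem~\ref{thm: Tanaka's classfy P1}(4)), normalize, Stein-factorize to get $X'\to B\to T$ with $[K(B):K(A)]=4$; and then the key step proves $B$ is abelian by adjunction along the horizontal conductor divisor $C$, nefness of $-\pi^*K_X$, and the characterization Theorem~\ref{thm: cover of abelian variety}, after which the product structure $X'\cong A'\times\mathbb{P}^1$ is obtained by re-running Propositions~\ref{prop: CWZ23prop3.3} and~\ref{prop: cwz23 prop 5.2}. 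Your parts (ii) and (iii) are fine in spirit and essentially match the paper (the foliation correspondence plus Proposition~\ref{prop:can-foliation} with $p=2$, and the descent of $\tau^2$, which is just the norm argument), but they are contingent on (i); as written, (i) is not established, so the proof as a whole does not go through without replacing the field-theoretic descent by an argument of the paper's type.
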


\begin{proof}
By Proposition~\ref{prop: semiample}, $-K_X$ is semi-ample and $f$-ample. Thus we can apply Proposition~\ref{prop: twisted section} to find an $L \in \mathrm{Pic}^0(A)$ such that $H^0(X, -K_X + f^*L) \neq 0$. And we can assume that $-K_X + f^*L \sim T$, where $T$ is an effective divisor with $\deg_{K(A)} T = 2$, $T$ is normal, and the projection $T \to A$ is an isogeny of abelian varieties.

Consider the base change $T \to A$. By Theorem~\ref{thm: Tanaka's classfy P1} (iv), $X_T$ is reduced. Let $\nu \colon X' \to X_T$ be the normalization morphism, and let $X' \to B \to T$ be the Stein factorization. This yields the following commutative diagram:
\[
\begin{tikzcd}
X' \arrow{d}{f'}\arrow{r}{\nu}\arrow[bend right=-30]{rr}{\pi}& X_{T} \arrow{d}\arrow{r}
& X\arrow{d}{f}\\
B \arrow{r}&T\arrow{r}& A
\end{tikzcd}
\]
It follows from Theorem~\ref{thm: Tanaka's classfy P1} that $[K(B) : K(A)] = 4$ and $X\times_A B$ is not reduced. We can write that 
\begin{align*}
    K_{X'} &\sim \nu^*K_{X_T} - C - V \\
    & = \pi^*K_X - C - V,
\end{align*}
where $V$ is the vertical component and $C$ is the unique horizontal irreducible component with $\deg_{K(B)} C = 1$. Noting that $X^{1/p} \to X$ factors through $X' \to X$, it follows that $B \to A$ is a finite, purely inseparable morphism of height one and degree $4$. Since $X' \to X$ has degree $2$, there exists a rank one foliation $\mathcal{F}$ such that $X \cong X'/\mathcal{F}$.

\medskip

(i) We first show that $B$ is isomorphic to an abelian variety. Let $C^\nu \to C$ be the normalization morphism of $C$. By the adjunction formula (Theorem~\ref{thm: adjunction}),
\[
- \pi^*K_X|_{C^\nu} \sim - (K_{X'} + C + V) |_{C^\nu} \sim - (K_{C^\nu} + \Delta_{C^\nu} + V|_{C^\nu}).
\]
Since $C^\nu$ has maximal Albanese dimension, $K_{C^\nu} \geq 0$ and both $\Delta_{C^\nu}$ and $V|_{C^\nu}$ are effective. The nefness of $- \pi^*K_X$ then gives
\[
K_{C^\nu} \equiv \Delta_{C^\nu} \equiv V|_{C^\nu} \equiv 0.
\]
Thus $C$ is normal and isomorphic to an abelian variety $A'$, hence $B \cong A'$. 
\smallskip

Next, we show that $X' \cong A' \times \mathbb{P}^1$. As in Proposition~\ref{prop: semiample}, consider the map $f'^*\Omega^1_{A'/A} \to \Omega^1_{X'/X}$. Since $\mathrm{rank}(\Omega^1_{A'/A}) = 2 > \mathrm{rank}(\Omega^1_{X'/X}) = 1$, and $\Omega^1_{A'/A}$ is globally generated, applying Proposition \ref{prop: CWZ23prop3.3}  shows that the linear system of $|\det \Omega_{Y/X}^1|$ contains horizontal movable part. We can write that
\[
|\det \Omega^1_{X'/X}| = \mathfrak M + F,
\]
where $F$ is the fixed part and $\mathfrak M$ the movable part. The pullback of the anti-canonical divisor is
\[
- \pi^* K_X \sim - (K_{X'} + \mathfrak M + F)
\]
and is nef. With $\deg_{K(A')} K_{X'} = -2$ and $\deg_{K(A')} K_X < 0$, we find $\deg_{K(A')}\mathfrak M = 1$. By Proposition~\ref{prop: cwz23 prop 5.2}, $\mathfrak M$ is base-point free and induces a morphism $h: X' \to \mathbb{P}^1$. Furthermore, for any closed fiber $X'_t$ of $h$, the projection $X'_t \to A'$ is an isomorphism. Thus $(f', h): X' \to A' \times \mathbb{P}^1$ is an isomorphism.
\smallskip

Finally, by Theorem~\ref{thm: Tanaka's classfy P1}, we know that $X'$ coincides with the normalization of  $(X_{A'})_{\mathrm{red}}$. This finishes the proof of (i).

\medskip

(ii) Fix a closed point $t \in \mathbb{P}^1$ and consider the restriction of the divisor $- \pi^* K_X|_{X'_t}$ on the fiber $X'_t \cong A'$
\[
- \pi^* K_X|_{X'_t} \sim - (K_{X'} + X'_t+ F)|_{X'_t} \sim - (K_{X'_t} + F|_{X'_t}) \sim  -F|_{X'_t}.
\]
The nefness of $ -F|_{X'_t}$ implies that $ -F|_{X'_t}=0$. From this, we conclude that
 $F = 0$. As a result
\[
\det \mathcal{F} \sim -\det \Omega_{X'/X}^1 \sim -\mathfrak M \sim h^*\mathcal{O}_{\mathbb{P}^1}(-1).
\]
And since both $X'$ and $X$ are smooth, the foliation $\mathcal{F}$ is smooth (\cite[p.142]{Eke87}). 

\medskip

(iii) Note that $- \pi^* K_X \sim - K_{X'} - \mathfrak M \sim h^* \mathcal{O}_{\mathbb{P}^1}(1)$, so $H^0(X', - \pi^* K_X) \neq 0$. Because $\pi: X' \to X$ is a finite morphism of degree $2$, the norm of a nonzero global section of $\pi^* \mathcal{O}_X(-K_X)$ gives a nonzero section of $\mathcal{O}_X(-2K_X)$.

\end{proof}

\subsection{A full description of $X$}

In this section, we will give a complete description of the foliation $\Ff$. By Theorem \ref {thm: structure-inseparable}, we have the following commutative diagram
\[
\begin{tikzcd}
X'= A'\times \Pp^1 \arrow{d}{f'}\arrow{r}{\pi} & X\cong (A'\times \Pp^1)/\Ff\arrow{d}{f}\\
A' \arrow{r}{\tau} &A
\end{tikzcd}
\] 
where $\tau: A'\to A$ is a purely inseparable isogeny of abelian varieties of degree $p^2=4$. The kernel $\mathrm{ker}(\tau)$ is an infinitesimal group of length $p^2$, we may assume its Lie algebra is spanned by $\alpha, \beta \in \mathrm{ker}(H^0(A', T_{A'}) \to \tau^*H^0(A, T_{A}))$, which satisfies that $[\alpha, \beta]=0$ and $\alpha^2, \beta^2\in \mathrm{Span}\{\alpha,\beta\}$. We may choose
$\alpha,\beta$, which fall into one of the following cases:
\begin{enumerate}
  \item[(I)] $\alpha^2= \beta^2=0$;
  \item[(II)] $\alpha^2 = \alpha$ and $\beta^2=\beta$;
  \item[(III)] $\alpha^2 = \alpha$ and $\beta^2=0$;
  \item[(IV)] $\alpha^2 = \beta$ and $\beta^2=0$.
\end{enumerate}

Let $\dim A' = d$. We may write that $T_{A'}=\Oo_{A'}\cdot\alpha\oplus \Oo_{A'}\cdot\beta \bigoplus_{i=1}^{d-2} \Oo_{A'}\cdot\gamma_i$. Under the isomorphism $T_{X'} \cong f'^*T_{A'}\oplus h^*T_{\Pp^1}$, we identify $\alpha, \beta$ as sections of $T_{X'}$. We consider the subsheaf
$$T_1=\Oo_{X'}\cdot\alpha\oplus \Oo_{X'}\cdot\beta \oplus h^*T_{\Pp^1} \subseteq T_{X'}.$$
By the construction, we have that the foliation $\Ff\subset T_1$.
Since $\Ff\sim h^*\Oo_{\Pp^1}(-1)$, the inclusion $\Ff\subset T_1$ is determined by a non-zero element (unique up to scaling)
$$\delta_{\Ff} \in \Hom(h^*\Oo_{\Pp^1}(-1),T_1)\simeq H^0(A'\times \Pp^1, T_1\otimes h^*\Oo_{\Pp^1}(1)).$$
Let $V_0 = \Pp^1\backslash\{\infty\} \cong \mathbb{A}^1_{(t)}, V_1 = \Pp^1\backslash\{0\}\cong \mathbb{A}^1_{(s)}$ where $t\in V_0, s\in V_1$ are coordinates on each piece such that $t=1/s$ on $V_0\cap V_1$. Identify $h^*\Oo_{\Pp^1}(1))|_{A'\times V_0} =\Oo_{X'}\cdot 1$ and $h^*\Oo_{\Pp^1}(1))|_{A'\times V_1} =\Oo_{X'}\cdot \frac{1}{s}$. Restricting on $A'\times V_0$, under the identification
$T_1\otimes h^*\Oo_{\Pp^1}(1))|_{A'\times V_0} = \Oo_{X'}\cdot\alpha\oplus \Oo_{X'}\cdot\beta \oplus \Oo_{X'} \cdot \partial_t$
the element $\delta_{\Ff}$, over the subset $A'\times V_0$, can be written as
$$\delta=a(t) \alpha+b(t) \beta+c(t)\partial_t.$$
Here $a(t), b(t), c(t)\in k[t]$ satisfy the following conditions
    \begin{itemize}
        \item[(C1)] ${\rm gcd}(a, b, c)=1$ since $\Ff$ is saturated in $T_1$;
        \item[(C2)] by $\Ff \sim\Oo_{\Pp^1}(-1)$ and $\partial_t = s^2\partial_s$, we have $\deg a(t), \deg b(t) \leq 1$ and $\deg c(t) \leq 3$ and at least one equality holds;
             \item[(C3)] $\delta^2=(a(t) \alpha+b(t) \beta+c(t)\partial_t)^2 \in \mathrm{Span}_k (\delta)$ by $\Ff^p \subset \Ff$.
        \end{itemize}
Remark that $[\Ff, \Ff] \subseteq \Ff$ automatically holds since rank $\Ff = 1$. Conversely, every $\delta$ subjected to Conditions (C1,C2,C3) determines a foliation $\Ff$ as required. 

\begin{theorem}\label{thm: classfy F}
With the notation above, $\delta$ is given by one of the following forms. 
\begin{enumerate}[label={\rm (\Roman*)}]
    \item $c=0$, and one of polynomials $a,b$ is constant while the other has degree 1. 
    
    \item $ab|c$ with $0\leq \min\{\deg a,\deg b\} \leq \max\{\deg a, \deg b\}=1$. 
    More precisely, there are mutually different $t_0,t_1,t_2\in k$, such that one of the following holds:
    
    \begin{enumerate}[label={\rm (II-\roman*)}]
        \item $a=1,\; b=\frac{t-t_1}{t_1+t_2},\; c=\frac{(t-t_1)(t-t_2)}{t_1+t_2};$
        \item $a=\frac{t-t_2}{t_1+t_2},\; b=1,\; c=\frac{(t-t_1)(t-t_2)}{t_1+t_2};$
        \item $a=(t-t_2),\; b=(t-t_1),\; c=(t-t_1)(t-t_2);$
        \item $a=(t_0+t_1)(t-t_2),\; b=(t_0+t_2)(t-t_1),\; c=(t-t_0)(t-t_1)(t-t_2).$
    \end{enumerate}
    
    \item $a^2b|c$. More precisely, there are two different elements $t_1,t_2\in k$ and nonzero $s\in k$, such that one of the following holds:
    \begin{enumerate}[label={\rm (III-\roman*)}]
    \item $a=s(t-t_1),\; b=1,\; c=s(t-t_1)^2;$
    \item $a=1,\; b=\frac{t-t_1}{s},\; c=t-t_1;$
    \item $a=(t_1+t_2)(t-t_2),\; b=\frac{t-t_1}{s},\; c=(t-t_1)^2(t-t_2).$
    \end{enumerate}
    
    \item $a^3|c$. More precisely, there are  $t_1, t_2\in k$ and nonzero $s_1, s_2, r_2\in k$, such that one of the following holds:

    \begin{enumerate}[label={\rm (IV-\roman*)}]
    \item $a=1,\; b=\frac{t}{s_1}+t_2,\; c=s_1;$
    \item $a=t,\; b=\frac{1}{s_1}+t_2t,\; c=s_1t^3;$
    \item $a=t+s_1r_2,\; b=\frac{1}{s_1s_2}, \; c=s_1 s_2 (t+s_1r_2)^3;$
    \item $a=t-t_1,\; b=\frac{t-t_2}{s_1s_2(t_1+t_2)}, \;  c=s_1 s_2 (t-t_1)^3.$
    \end{enumerate}

\end{enumerate}
\end{theorem}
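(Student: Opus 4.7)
The plan is to translate condition (C3) into explicit polynomial identities on $(a,b,c)$ using the restricted Lie algebra structure, and then to exhaust the possibilities case by case using the degree bounds (C2) and the coprimality (C1).

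First, I would compute $\delta^{[2]}$ explicitly. Using Jacobson's formula in characteristic two, together with the vanishings $[\alpha,\beta]=[\alpha,\partial_t]=[\beta,\partial_t]=0$ and the identity $\partial_t^{[2]}=0$ on $k[t]$, a direct calculation gives
\[
\delta^{[2]} \;=\; a^{2}\alpha^{[2]}+b^{2}\beta^{[2]}+ca'\alpha+cb'\beta+cc'\partial_t.
\]
Since $\Ff$ is a line subbundle locally generated by $\delta$ on $A'\times V_0$, the $p$-closedness $\Ff^{[2]}\subset\Ff$ amounts to $\delta^{[2]}=f\delta$ for some $f\in k[t]$. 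Comparing the $\partial_t$-coordinates forces $f=c'$ when $c\neq 0$, and $f=0$ when $c=0$.

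In Case (I), where $\alpha^{[2]}=\beta^{[2]}=0$, the remaining equations are $ca'=fa$ and $cb'=fb$. If $c\neq 0$ then $(ca)'=(cb)'=0$, i.e.\ $ca,cb\in k[t^2]$; a short degree analysis combining $\deg c\leq 3$, $\deg a,\deg b\leq 1$ and (C1) shows that no such triple exists. Hence $c=0$, $\gcd(a,b)=1$ and $\max\{\deg a,\deg b\}=1$, and the full $\mathrm{GL}_2(k)$-action on $\mathrm{Span}\{\alpha,\beta\}$ (which preserves the trivial $[2]$-map) normalizes $(a,b)$ to $(t,1)$, yielding form (I).

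In Cases (II)--(IV) one always has $c\neq 0$ and $f=c'$; comparing $\alpha$- and $\beta$-components then yields the integrability equations $(ca)'=a^{2}$ and $(cb)'=b^{2}$ in Case (II); $(ca)'=a^{2}$ together with $c'b=cb'$ in Case (III); and $c'a=ca'$ together with $(cb)'=a^{2}$ in Case (IV). In characteristic two the identity $(ca)'=a^{2}=a_1^{2}t^{2}+a_0^{2}$ pins down the coefficients of $t$ and $t^{3}$ in $ca$ (namely $a_0^{2}$ and $a_1^{2}$), leaving the coefficients of $t^{0},t^{2},t^{4}$ as parameters; the divisibility $a\mid ca$ imposes one residue relation among them, so $c$ is determined by $a$ and at most two scalars. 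Combining with the companion equation and the bound $\deg c\leq 3$ constrains the roots of $c$ to lie, with controlled multiplicities, at the roots of $a$ and $b$, plus possibly one additional point. Enumerating the possibilities for $(\deg a,\deg b)\in\{0,1\}^{2}$ and for the presence of a third root of $c$, and collapsing equivalent forms under the residual symmetries --- $\mathrm{GL}_2(\mathbb{F}_2)\cong S_3$ in Case (II); the subgroup fixing $\alpha$ in Case (III); the $\mathbb{G}_m$-rescaling $\alpha\mapsto\lambda\alpha$ (which forces $\beta=\alpha^{[2]}\mapsto\lambda^{2}\beta$) in Case (IV); together with the $\mathbb{G}_m$-rescaling of $t$ from $\mathrm{Aut}(\Pp^{1},\infty)$ --- yields exactly the lists (II-i)--(II-iv), (III-i)--(III-iii) and (IV-i)--(IV-iv).

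The hardest part will be the combinatorial bookkeeping in the last step: in each of Cases (II)--(IV) one must enumerate the patterns of shared roots among $a,b,c$ (with multiplicities), verify the integrability identities on each pattern, and apply the correct residual automorphisms to merge equivalent normal forms. Each individual sub-case reduces to a short direct computation, but the total enumeration is lengthy and requires care to avoid both missing and duplicating cases.
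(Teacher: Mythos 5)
Your proposal follows essentially the same route as the paper: the same Jacobson-formula computation of $\delta^{[2]}$, the same reduction of (C3) to $\delta^{[2]}=c'\delta$, the same per-case integrability equations $(ca)'=a^2$, $(cb)'=0$, etc., and the same characteristic-two integration (kernel of $d/dt$ consists of squares) followed by enumeration over $(\deg a,\deg b)$; the paper integrates $(c/a)'$ directly rather than matching coefficients, and enumerates without quotienting by the residual $\mathrm{GL}_2$-symmetries (so, e.g., (II-i) and (II-ii) appear as separate items rather than being merged), but these are presentational differences. The outline is correct; only the acknowledged case-by-case bookkeeping remains to be written out.
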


\begin{proof}
By $\partial_t^2=0$, $[\alpha, \partial_t]=[\beta, \partial_t]=0$ and $\alpha(r(t))=\beta(r(t))=0$ for any regular function $r(t)$ on $U_0$, we have 
\begin{equation}\label{eq: f^p}
\delta^2=(a(t) \alpha+b(t) \beta+c(t)\partial_t)^2=a^2\alpha^2+b^2\beta^2+ca'\alpha+cb'\beta+cc'\partial_t
\end{equation} where $a', b' ,c'$ are the derivatives of $a(t), b(t), c(t)$ respectively. Due to Conditions (C1) and (C2), any two of $a,b,c$ can not be zero simultaneously.

\medskip

\textbf{Case {\rm (I)}:}

By \eqref{eq: f^p}, Condition (C3) becomes $\frac{ca'}{a}=\frac{cb'}{b}=\frac{cc'}{c}$ whenever the denominators are non-zero.
If $c =0$, then $a=1,\, \deg b=1$, or $b=1,\, \deg a=1$.

In the following, we assume that $c\neq0$. Suppose $a\neq 0$, we have $(\frac{c}{a})'=\frac{ac'-ca'}{a^2}=0$. Hence, $\frac c a = r$ where $r$ is a rational polynomial of $t^2$. Thus, there exists coprime polynomials $\ti f_1$ and $\ti g_1$, such that $r=\frac{\ti f_1^2}{\ti g_1^2}$. Therefore, there exists a polynomial $\ti h_1$ such that $a= {\ti g_1}^2 \ti h_1$ and $c =\ti f^2_1\ti h_1.$
Moreover, $\deg a(t)\leq 1$ implies that $\ti g_1$ is constant. Let $f_1:=\frac{\ti f_1}{\ti g_1}$ and $h_1:={\ti g_1}^2\ti h_1$. Therefore, 
\[
a=h_1, \quad c=f_1^2 h_1=f_1^2a.
\]
In the same way, if $b\neq 0$, we get
\[
b=h_2, \quad c=f_2^2h_2=f_2^2b.
\]
Note that $\rm gcd(a,b,c)=gcd(a,b)=1$. If $a=0$, then $b=1,\, c=f_2^2b=f_2^2$, which contradicts Condition (C2). Since $a$ and $b$ have symmetric positions, we can also exclude the case $b=0$. 

From now on, we assume that $a,b\neq0$. Note that Equation $f_1^2a=c=f_2^2b$ implies that $\deg a= \deg b$. 

If $\deg a=\deg b=0$, then $\deg c=2\deg f_1$ is an even number and hence cannot be 3, which contradicts Condition (C2). 

If $\deg a=\deg b=1$ with $\rm gcd(a,b)=1$, then $a| f_2$ and $b| f_1$. Thus $a$ occurs odd times in $f_1^2 a$ while even times in $f_2^2 b$, which contradicts $f_1^2a=c=f_2^2b$.

\medskip

\textbf{Case {\rm (II)}:}

By \eqref{eq: f^p}, Condition (C3) becomes $\frac{a^2+ca'}{a}=\frac{b^2+cb'}{b}=\frac{cc'}{c}$ whenever the denominators are non-zero. If $c =0$, then $a=b$, which contradicts Condition (C2). 

In the following, we assume that $c\neq 0$. Assume that $a\neq 0$, then $(\frac c a)'= \frac{c'a+ca'}{a^2}=1.$ Hence, $\frac c a = t + \frac{\ti f_1^2}{\ti g_1^2}=\frac{t\ti g_1^2+\ti f_1^2}{\ti g_1^2}$, where $\ti f_1$ and $\ti g_1$ are coprime polynomials. Note that $t\ti g_1^2+ \ti f_1^2$ and $\ti g_1^2$ are coprime polynomials. Thus, there exists a polynomial $\ti h_1$ such that $a= {\ti g_1}^2 \ti h_1$ and $c =(t{\ti g_1}^2+\ti f^2_1)\ti h_1.$ Moreover, $\deg a(t)\leq 1$ implies that $\ti g_1$ is constant. Let $f_1:=\frac{\ti f_1}{\ti g_1}$ and $h_1:={\ti g_1}^2\ti h_1$. Therefore, 
\[
a=h_1, \quad c=(t+f_1^2)h_1=(t+f_1^2)a.
\]
Similarly, if $b\neq 0$, we have
\[
b=h_2, \quad c =(t+ f^2_2)h_2 = (t+ f^2_2)b.
\]
Note that $\rm gcd(a,b)=gcd(a,b,c)=1$. If $a=0$, then $b=1, c=t+f_2^2$, which contradicts Condition (C2). Since $a$ and $b$ have symmetric positions, we can also exclude the case $b=0$. 

From now on, we assume that $a,b\neq0$. The following discussion is based on the Equation 
\begin{equation}\label{eq: case 2}
   (t+f_1^2)a=c=(t+f_2^2)b.  
\end{equation}

If $\deg a=\deg b=0$, then $\deg c=\deg (t+f_1^2)\neq 3$, which contradicts Condition (C2).

\textbf{Case {\rm (II-i)}:} $\deg a=0,\, \deg b=1$

Note that $a=1$ and $b$ has a root $t_1\in k$. Equation \eqref{eq: case 2} implies that $\deg f_1=1$ and $\deg f_2=0$. Denote $t_2:=f_2^2\in k$ and then Equation \eqref{eq: case 2} becomes $t+f_1^2=(t-t_2)b$. Note that the polynomial $t+f_1^2$ has distinct roots $t_1,\, t_2$ since it is not a perfect square. More precisely, we have 
$t+f_1^2=\frac{(t-t_1)(t-t_2)}{t_1+t_2}$
and then
\[
a=1,\quad b=\frac{t-t_1}{t_1+t_2},\quad c=\frac{(t-t_1)(t-t_2)}{t_1+t_2}.
\]

\textbf{Case {\rm (II-ii)}:} $\deg a=1,\, \deg b=0$

Since $a$ and $b$ have symmetric positions, in this case, we have
\[
a=\frac{t-t_2}{t_1+t_2},\quad b=1,\quad c=\frac{(t-t_1)(t-t_2)}{t_1+t_2}.
\]

\textbf{Case {\rm (II-iii\&iv)}:} $\deg a=\deg b=1$ with $\gcd(a,b)=1$.  

\textbf{Case {\rm (II-iii)}:} $\deg f_1=\deg f_2=0$

Denote $t_1:=f_1^2\in k, \, t_2:=f_2^2\in k$. By Equation \eqref{eq: case 2} and $\gcd(a,b)=1$, we have $a|(t-t_2)$ and $b|(t-t_1)$. More precisely, 
\[
a=(t-t_2),\quad b=(t-t_1),\quad c=(t-t_1)(t-t_2).
\]

\textbf{Case {\rm (II-iv)}:} $\deg f_1=\deg f_2=1$

Both polynomials $t + f_1^2$ and $t + f_2^2$ have distinct roots, since they are not perfect squares. Moreover, Equation \eqref{eq: case 2} implies that polynomials $t+f_1^2$ and $t+f_2^2$ have only one common root $t_0\in k$. More precisely, there exist distinct $t_1,\, t_2\in k$, such that
$t+f_1^2=\frac{(t-t_0)(t-t_1)}{t_0+t_1}$ and $t+f_2^2=\frac{(t-t_0)(t-t_2)}{t_0+t_2}$.  
By Equation \eqref{eq: case 2} and $\gcd(a,b)=1$, we have $a|(t-t_2)$ and $b|(t-t_1)$. More precisely, 
\[
a=(t_0+t_1)(t-t_2),\quad b=(t_0+t_2)(t-t_1), \quad c=(t+f_1^2)a=(t-t_0)(t-t_1)(t-t_2). 
\]

\medskip

\textbf{Case {\rm (III)}:}

By \eqref{eq: f^p} Condition (C3) becomes $\frac{a^2+ca'}{a}=\frac{cb'}{b}=\frac{cc'}{c}$ whenever the denominators are non-zero. The case $c=0$ can be ruled out by Conditions (C1) and (C2). In the following, we assume that $c\neq 0$. 
After the same calculations as in case {\rm (I)} and case {\rm (II)}, if $a\neq0$, then $c=(t+f_1^2)a$. If $b\neq 0$, then $c=f_2^2b$. Note that $\rm gcd(a,b)=gcd(a,b,c)=1$. If $a=0$, then $b=1, c=f_2^2b=f_2^2$, which contradicts Condition (C2). If $b=0$, then $a=1, c=(t+f_1^2)a=t+f_1^2$, which contradicts Condition (C2). 

From now on, we assume that $a,b\neq0$. The following discussion is based on the Equation 
\begin{equation}\label{eq: case 3}
   (t+f_1^2)a=c=f_2^2b.  
\end{equation}

\textbf{Case {\rm (III-i)}:} $\deg a=1,\, \deg b=0$

By Equation \eqref{eq: case 3} and $\gcd(a,b)=1$, we have $b=1$, $\deg f_1=0$ and $\deg f_2=1$. Denote $t_1:=f_1^2\in k$, and then Equation \eqref{eq: case 3} becomes $(t-t_1)a= (t+f_1^2)a=c=f_2^2$. Therefore, there exists nonzero $s\in k$, such that
\[
a=s(t-t_1),\quad b=1,\quad c=(t-t_1)a=s(t-t_1)^2.
\]

\textbf{Case {\rm (III-ii)}:} $\deg a=0,\, \deg b=1$
 
By Equation \eqref{eq: case 3} and $\gcd(a,b)=1$, we have $a=1$ and $\deg f_1=\deg f_2=0$. Denote $t_1:=f_1^2\in k$ and nonzero $s:=f_2^2\in k$ and then Equation \eqref{eq: case 3} implies that
\[
a=1,\quad b=\frac{t-t_1}{s},\quad c=t-t_1.
\]

\textbf{Case {\rm (III-iii)}:} $\deg a=1,\, \deg b=1$

Let $t_1,\, t_2$ be root of $b,\, a$ respectively. By Equation \eqref{eq: case 3} and $\gcd(a,b)=1$, we have $b|(t+f_1^2)$, $a|f_2^2$ and then $\deg f_1=\deg f_2=1$, since $\deg c\leq 3$. Moreover, polynomial $t+f_1^2$ has roots $t_1, t_2$ and $f_2$ has root $t_2$ of multiplicity 2. More precisely,  $t+f_1^2=\frac{(t-t_1)(t-t_2)}{t_1+t_2}$, and there exists nonzero $s\in k$, such that $f_2^2=s(t-t_2)^2$. Therefore, Equation \eqref{eq: case 3} implies that 
\[
a=(t_1+t_2)(t-t_2),\quad b=\frac{t-t_1}{s},\quad c=(t-t_1)^2(t-t_2).
\]

\medskip

\textbf{Case {\rm (IV)}:}

By \eqref{eq: f^p} Condition (C3) becomes $\frac{ca'}{a}=\frac{a^2+cb'}{b}=\frac{cc'}{c}$ whenever the denominators are non-zero. If $c=0$, then $a=0$, $b=1$, which contradicts Condition (C2). In the following, we assume that $c\neq 0$, and then the above Equation becomes 
\begin{equation}\label{eq: case 4}
ca'=ac',\quad a^2+cb'=c'b.
\end{equation}
If $a=0$, then $b\neq0$. By calculations in case {\rm (I)} for the second Equation of \eqref{eq: case 4}, we have $c=f_2^2 b$. Thus, we cannot find $b$ and $c$ satisfying both (C1) and (C2) simultaneously. 
If $b=0$, then $a\neq0$. By calculations in case {\rm (I)} for the first Equation of \eqref{eq: case 4}, we have $c=f_1^2 a$. Thus, we cannot find $a$ and $c$ satisfying both (C1) and (C2) simultaneously.
In the following, we assume that $a,b\neq 0$. By calculations in case {\rm (I)}, the first Equation of \eqref{eq: case 4} can be solved and then the second Equation of \eqref{eq: case 4} can be simplified as
\[
c=f_1^2a,\quad (\frac{b}{a})'=\frac{(ab)'}{a^2}=\frac{1}{a^2}\cdot(bc\cdot\frac{1}{f_1^2})'=\frac{(bc)'}{a^2f_1^2}=\frac{1}{f_1^2}.
\]
Solve the differential Equation above using integration by parts, and then there are coprime polynomials $\ti f_2,\ti g_2$, such that 
\begin{equation}\label{eq: case 4'}
  c=f_1^2a, \quad b=\frac{t}{f_1^2}a+\frac{\ti f_2^2}{\ti g_2^2}a=\frac{t\ti g_2^2+f_1^2\ti f_2^2}{f_1^2\ti g_2^2}a.
\end{equation}
The following disscussion is based on the above Equation \eqref{eq: case 4'}. Note that to make $c$ a polynomial of $\deg\leq 3$, it follows that $\deg f_1\leq 1$. 

\textbf{Case {\rm (IV-i)}:} $\deg f_1=0$

In this case, note that $t\ti g_2^2+f_1^2\ti f_2^2$ and $f_1^2\ti g_2^2$ are coprime polynomials. By Equation \eqref{eq: case 4'}, there is a polynomial $\ti h_2$, such that $a=f_1^2\ti g_2^2\ti h_2,\, b=(t\ti g_2^2+f_1^2\ti f_2^2)\ti h_2.$
Then $\deg a\leq 1$ implies that $\deg \ti g_2=0$. Let $f_2:=\ti f_2/\ti g_2,\, h_2:=f_1^2\ti g_2^2\ti h_2$. We get $a=h_2,\, b=(t/{f_1^2}+\ti f_2^2/\ti g_2^2)f_1^2\ti g_2^2\ti h_2=(t/{f_1^2}+f_2^2)h_2,\, c=f_1^2a=f_1^2h_2.$
Note that $h_2=\gcd (a,b,c)=1$ and $\deg f_2=0$ since $\deg b\leq 1$. Denote $s_1:=f_1^2\neq 0,\, t_2:=f_2^2\in k$. Therefore, 
\[
a=1,\quad b=\frac{t}{s_1}+t_2,\quad c=s_1.
\]

\textbf{Case {\rm (IV-ii)}:} $\deg f_1=1,\, \deg \ti g_2=0$ or $\deg f_1=1,\,\ti f_2=0$

By Equation \eqref{eq: case 4'}, to make $b$ a polynomial of $\deg\leq 1$, it follows that $f_1^2|ta$ and $\deg \ti f_2=0$. Since $\deg a\leq 1$, we can assume that $a=t$ and $f_1^2=s_1t^2$ for some nonzero $s_1\in k$. Denote $t_2:=\ti f_2^2/\ti g_2^2\in k$. Therefore, Equation \eqref{eq: case 4'} becomes 
$$a=t,\quad b=\frac{t}{f_1^2}a+\frac{\ti f_2^2}{\ti g_2^2}a=\frac{1}{s_1}+t_2t,\quad c=f_1^2a=s_1t^3.$$

\textbf{Case {\rm (IV-iii\&iv)}:} $\deg f_1=1,\, \ti f_2\neq 0$

By Equation \eqref{eq: case 4'}, to make $b$ a polynomial of $\deg\leq 1$, it follows that $\ti g_2^2|(t\ti g_2^2+f_1^2\ti f_2^2)a$. Because $\ti f_2\neq 0$ and $\gcd(\ti f_2,\ti g_2)=1$, we have $\ti g_2^2|f_1^2a$. Hence, $2\deg \ti g_2\leq 2\deg f_1+\deg a\leq 3 $ and then $\deg \ti g_2=1$.
Thus, $f_1^2=s_1 \ti g_2^2$ for some nonzero $s_1\in k$. Then Equation \eqref{eq: case 4'} can be simplified as 
\begin{equation}\label{eq: case 4.3 & 4.4}
    c=f_1^2a,\quad b=\frac{t+s_1 \ti f_2^2}{s_1 \ti g_2^2}a.
\end{equation}
By Equation \eqref{eq: case 4.3 & 4.4}, to make $b$ a polynomial of $\deg\leq 1$, it follows that $\deg \ti f_2\leq 1$.

\textbf{Case {\rm (IV-iii)}:} $\deg \ti g_2=1,\, \deg \ti f_2=0$

In this case, to make $b$ a polynomial, Equation \eqref{eq: case 4.3 & 4.4} implies that $\ti g_2|(t+s_1 \ti f_2^2)$, $\ti g_2| a$ and hence $b$ is nonzero constant. Denote nonzero $r_2:=\ti f_2^2\in k$. We can assume that $a=t+s_1r_2$ and $\ti g_2^2=s_2(t+s_1r_2)^2$ for some nonzero $s_2\in k$. Therefore, Equation \eqref{eq: case 4.3 & 4.4} becomes 
$$a=t+s_1r_2,\quad b=\frac{t+s_1 \ti f_2^2}{s_1 \ti g_2^2}a=\frac{1}{s_1s_2}, \quad c=f_1^2a=s_1\ti g_2^2a=s_1 s_2 (t+s_1r_2)^3.$$

\textbf{Case {\rm (IV-iv)}:} $\deg \ti g_2=1, \, \deg \ti f_2=1$

Note that the polynomial $t+s_1\ti f_2^2$ has two distinct roots $t_1, t_2\in k$, since it is not a perfect square.  More precisely,  $t+s_1\ti f_2^2=\frac{(t-t_1)(t-t_2)}{t_1+t_2}$. To make $b$ a polynomial, Equation \eqref{eq: case 4.3 & 4.4} implies that $\ti g_2|(t+s_1 \ti f_2^2)$, $\ti g_2| a$ and hence $b$ is a polynomial of degree 1. Without loss of generality, we can assume that $\ti g_2$ has root $t_1$. Thus, let $a=t-t_1$ and then $\ti g_2^2=s_2(t-t_1)^2$ for some nonzero $s_2\in k$. Therefore,
$$a=t-t_1,\quad b=\frac{t+s_1 \ti f_2^2}{s_1 \ti g_2^2}a=\frac{t-t_2}{s_1s_2(t_1+t_2)}, \quad  c=f_1^2a=s_1\ti g_2^2a=s_1 s_2 (t-t_1)^3.$$

\end{proof}

\bibliographystyle{alpha}
\bibliography{bibfile}
\end{document}